\documentclass{article}
\def\Bbb{\mathbb}
\def\cal{\mathcal}
\usepackage{color}
\usepackage{amsthm,amssymb,amsfonts,mathrsfs}
\usepackage{amsmath}
\catcode`@=11 \@addtoreset{equation}{section}

\catcode`@=12
\newtheorem{Theorem}{Theorem}[section]
\newtheorem{Proposition}{Proposition}[section]
\newtheorem{Lemma}{Lemma}[section]
\newtheorem{Corollary}{Corollary}[section]
\theoremstyle{definition}

\newtheorem{Definition}{Definition}[section]
\newtheorem{Remark}{Remark}

\newtheorem{Assumptions}{Hypothesis}[section]
\def\R{{\mathbb{R}}}

\def\vp{\varphi}
\def\ve{\varepsilon}


\title{Carleman estimates and observability inequalities for parabolic equations with interior degeneracy}
\author{{\sc Genni Fragnelli\thanks{Research supported by the GNAMPA project {\sl Equazioni di evoluzione degeneri e singolari: controllo e applicazioni}}}\\
Dipartimento di Matematica\\ Universit\`{a} di Bari "Aldo Moro"\\
Via
E. Orabona 4\\ 70125 Bari - Italy\\ email: genni.fragnelli@uniba.it\\
{\sc Dimitri Mugnai}\\
Dipartimento di Matematica e Informatica\\Universit\`a di
Perugia\\Via Vanvitelli 1, 06123 Perugia - Italy\\ email:
mugnai@dmi.unipg.it}

\date{}
\begin{document}

\maketitle

\begin{abstract}
We consider a parabolic problem with degeneracy in the interior of
the spatial domain, and we focus on Carleman estimates for the
associated adjoint problem. The novelty of interior degeneracy does
not let us adapt previous Carleman estimate to our situation. As an
application, observability inequalities are established.
\end{abstract}

Keywords: degenerate equation, interior degeneracy, Carleman
estimates, observability inequalities

MSC 2010: 35K65, 93B07

\section{Introduction}

In this paper we focus on two subjects that in the last years have
been object of a large number of papers, i.e. {\em degenerate}
problems and {\em Carleman estimates}. Indeed, as pointed out by
several authors, many problems coming from physics (boundary layer models in \cite{br}, models of Kolmogorov type in \cite{bz},
 models of Grushin type in \cite{bcg}, \ldots), biology (Wright-Fisher models in \cite{s} and Fleming-Viot models in \cite{fv}), and economics (Black-Merton-Scholes equations
in \cite{EG}) are described by degenerate
parabolic equations.

On the other hand, the fields of applications of Carleman estimates
are so wide that it is not surprising that also several papers are
concerned with such a topic. A first example is their application to
{\em global null controllability} (see \cite{m0}, \cite{m},
\cite{acf}, \cite{beau}, \cite{cfr}, \cite{cfr1}, \cite{cfv},
\cite{fcgb}, \cite{fdt}, \cite{f}, \cite{lrr}, \cite{lr},
\cite{LRL}, \cite{Ra}, \cite{vz} and the references therein): for
all $T>0$ and for all initial data $u_0\in L^2((0,T) \times(0,1))$
there is a suitable control $h\in L^2((0,T) \times(0,1))$ such that
the solution of
\begin{equation}\label{0}
\begin{cases}u_t - \left(au_x \right) _x  =h(t,x) \chi_{\omega}(x), \quad
(t,x) \in (0,T) \times(0,1),\\
u(0,x)=u_0(x)
\end{cases}
\end{equation}
with some boundary conditions, satisfies $ u(T, x)=0 $ for all $x
\in [0,1]$. Here $\chi_\omega$ denotes, as usual, the characteristic
function of the set $\omega$, i.e. $\chi(x)=1$ if $x\in \omega$ and
$\chi(x)=0$ if $x\not\in\omega$.

Moreover, Carleman estimates may be a fundamental tool in inverse
problems, in parabolic, hyperbolic and fractional settings, e.g. see
\cite{chue}, \cite{ima}, \cite{salo}, \cite{salo1}, \cite{Wu},
\cite{Ya} and their references.

However, all the previous papers deal with problems like \eqref{0}
which are non degenerate or admit that the function $a$
degenerates at the boundary of the domain, for example
\[
a(x)= x^k(1-x)^\alpha, \quad x \,\in \,[0,1],
\]
where $k$ and $\alpha$ are positive constants.

To our best knowledge, \cite{s1} is the first paper treating a
problem with a degeneracy which may occur in the interior of the
spatial domain. In particular, Stahel considers a parabolic problem
in a bounded smooth domain $\Omega$ of $\R^N$ with Dirichlet,
Neumann or mixed boundary conditions, associated to a $N\times N$
matrix $a$, which is positive definite and symmetric, but whose
smallest eigenvalue might converge to $0$ as the space variable
approaches a singular set contained in the closure of the spatial
domain. In this case, he proves that the corresponding abstract
Cauchy problem has a solution, provided that $\underline{a}^{-1} \in
L^q(\Omega, \R)$ for some $q >1$, where
\[ \underline{a}(x):= \min \{a(x)\xi\cdot \xi: \|\xi \|=1\}.
\]

Moreover, while  in \cite{s1} only the existence of a solution for
the parabolic problem is considered, in \cite{fggr} the authors
analyze in detail the degenerate operator
\[
{\cal A}u:= (au_x)_x
\]
in the space $L^2 (0,1)$, with or without weight, proving that in
some cases it is nonpositive and selfadjoint, hence it generates a
cosine family and, as a consequence, an analytic semigroup. In
\cite{fggr} the well-posedness of \eqref{0} with Dirichlet boundary
conditions is also treated, but nothing is said about
controllability properties.

This paper is then concerned with several inequalities (Carleman
estimates, observability inequalities, Hardy--Poincar\'e
inequalities) related to the parabolic equation with interior
degeneracy
\begin{equation}\label{linear}
\begin{cases}
u_t - \left(au_x \right) _x  =h(t,x) \chi_{\omega}(x),\quad
(t,x) \in (0,T) \times(0,1),\\
u(t,0)=u(t,1)=0, \\
u(0,x)=u_0(x),
\end{cases}
\end{equation}
where $(t,x) \in Q_T:=(0,T) \times (0,1)$, $u_0 \in L^2(0,1)$, $a$
degenerates at $x_0\in (0,1)$ and the control $h \in L^2(Q_T)$ acts
on a nonempty subdomain $\omega$ of $(0,1)$ such that
\[
x_0\in \omega.
\]

We shall admit two types of degeneracy for $a$, namely weak and
strong degeneracy. In particular, we make the following assumptions:
\begin{Assumptions}\label{Ass0}
{\bf Weakly degenerate case (WD):} there exists $x_0 \in (0,1)$ such
that $a(x_0)=0$, $a>0$ on $[0, 1]\setminus \{x_0\}$, $a\in
C^1\big([0,1]\setminus \{x_0\}\big)$ and there exists $K \in (0,1)$
such that $(x-x_0)a' \le K a$ in $[0,1]\setminus \{x_0\}$.
\end{Assumptions}

\begin{Assumptions}\label{Ass01}
{\bf Strongly degenerate case (SD):} there exists $x_0 \in (0,1)$
such that $a(x_0)=0$, $a>0$ on $[0, 1]\setminus \{x_0\}$, $a\in
C^1\big([0,1]\setminus \{x_0\}\big)\cap W^{1,\infty}(0,1)$ and there
exists $K \in [1,2)$ such that $(x-x_0)a' \le K a$ in
$[0,1]\setminus \{x_0\}$.
\end{Assumptions}
Typical examples for weak and strong degeneracies are $a(x)=|x-
x_0|^{\alpha}, \; 0<\alpha<1$ and $a(x)= |x- x_0|^{\alpha}, \;
1\le\alpha<2$), respectively.

The starting point of the paper is actually the analysis of the
adjoint problem to \eqref{linear}
\begin{equation}\label{01}
\begin{cases}
v_t + \left(a(x)v_x \right) _x =h, & (t,x) \in (0,T) \times (0,1),\\
v(t,1)=v(t,0)=0, &  t \in (0,T).\\
\end{cases}
\end{equation}
In particular, for any (sufficiently regular) solution $v$ of such a
system we derive the new fundamental Carleman estimate having the
form
\begin{equation}\label{car}
\begin{aligned}
&\int_0^T\int_0^1 \left(s\Theta a(v_x)^2 + s^3 \Theta^3
\frac{(x-x_0)^2}{a} v^2\right)e^{2s\varphi}dxdt\\
&\le C\left(\int_0^T\int_0^1 h^{2}e^{2s\varphi}dxdt +
s\int_0^T\left[a\Theta e^{2s \varphi}(x-x_0)(v_x)^2
dt\right]_{x=0}^{x=1}\right),
\end{aligned}
\end{equation}
for all $s \ge s_0$, where $s_0$ is a suitable constant. Here
$\Theta(t) :=[t(T-t)]^{-4}$, and $\varphi(t,x):= \Theta(t)\psi(x),$
with $\psi(x) <0$ given explicitly in terms of $a$, see \eqref{c_1}.
Of course, for the Carleman inequality, the location of $x_0$ with
respect to the control set $\omega$ is irrelevant, since $\omega$
plays no role at all.

For the proof of the previous Carleman estimate a fundamental
role is played by the second basic result of this paper, that is
a general Hardy-Poincar\'e type inequality proved in Proposition
\ref{HP}, of independent interest, that we establish for all
functions $w$ which are only {\em locally absolutely continuous} in
$[0,1]\setminus \{x_0\}$ and such that
\[
w(0)=w(1)=0, \text{ and }\int_0^1 p(x)|w'(x)|^2dx < +\infty,
\]
and which reads
\[
\int_0^1\frac{p(x)}{(x-x_0)^2}w^2(x)dx \le C\int_0^1
p(x)|w'(x)|^2dx.
\]

Here $p$ is {\em any} continuous function in $[0,1]$, with $p>0$ on
$[0,1]\setminus \{x_0\}$, $p(x_0)=0$ and there exists $q \in (1,2)$
such that the function
$$
\begin{aligned}
& x \longrightarrow \dfrac{p(x)}{|x-x_0|^{q}} \mbox { is
nonincreasing on the left of } x=x_0 \\& \mbox{and nondecreasing on
the right of } x=x_0.
\end{aligned}
$$

Applying estimate \eqref{car} to any solution $v$ of the adjoint
problem \eqref{01} with $h=0$, we shall obtain the observability
inequality
\[
\int_0^1v^2(0,x) dx \le C\int_0^T\int_{\omega} v^2(t,x)dxdt,
\]
where now we consider the fact that $x_0\in \omega$.

Such a result is then extended to the complete linear problem
\begin{equation}\label{nlo}
\begin{cases}
    u_t - \left(a(x)u_x \right) _x  + c(t,x)u  =h(t,x) \chi_{\omega}(x), & \ (t,x) \in (0,T) \times (0,1), \\
    u(t,1)=u(t,0)=0, & \  t \in (0,T),\\
     u(0,x)=u_0(x) , & \  x \in (0,1),
\end{cases}
\end{equation}
where $c$ is a bounded function, previously proving a Carleman
estimate associated to this problem, see Corollary \ref{cor_c} and
Proposition \ref{obser_c}.

Finally, observe that on $a$ we require that there exists $K \in
(0,2)$ such that $(x-x_0)a' \le K a$ in $[0,1]$, and $K\geq 2$ is
excluded. This technical assumption, which is essential in all our
results, is the same made, for example, in \cite{acf}, where the
degeneracy occurs at the boundary of the domain and the problem
fails to be null controllable on the whole interval $[0,1]$. But
since the null controllability for the parabolic problem and the
observability inequality for the adjoint problem are equivalent
(\cite{LRL}), it is not surprising that we require $K<2$.

\medskip

The paper is organized as follows. In Section \ref{sec2} we give the
precise setting for the weak and the strong degenerate cases and
some general tools we shall use several times; in particular a
general weighted Hardy--Poincar\'e inequality is established. In
Section \ref{Carleman estimate} we provided the main result of this
paper, i.e. a new Carleman estimate for degenerate operators with
interior degeneracy. In Section \ref{sec4} we apply the previous
Carleman estimates together with a Caccioppoli type inequality to
prove an observability inequality. Finally, in Section \ref{sec5} we
extend the previous results to complete linear problems.

We conclude this introduction with the following
\begin{Remark}
At a first glance, one may think that our results can be obtained
just by a ``translation'' of the ones obtained in \cite{acf}, but
this is not the case. Indeed, in \cite{acf} the degeneracy point was
the origin, where the authors put suitable homogeneous boundary
conditions (Dirichlet for the (WD) case and weighted Neumann in the
(SD) case) which coincide exactly with the ones obtained by the
characterizations of the domains of the operators given in
Propositions \ref{characterization} and \ref{domain} below. In this
way they can control {\em a priori} the possible uncontrolled
behaviour of the solution at the degeneracy point, while here we
don't impose any condition on the solution in the interior point
$x_0$.
\end{Remark}

\section{Preliminary Results}\label{sec2}
In order to study the well-posedness of problem \eqref{linear}, we
introduce the operator
\[
{\cal A}u:=(au_x)_x
\]
and we consider two different classes of weighted Hilbert spaces,
which are suitable to study two different situations, namely the
{\em weakly degenerate} (WD) and the {\em strongly degenerate} (SD)
cases:

{\bf CASE (WD):}
\[
\begin{aligned}
 H^1_a (0,1):=\big\{&u \text{ is absolutely
continuous in } [0,1],
\\ & \sqrt{a} u' \in  L^2(0,1) \text{ and } u(0)=u(1)=0
\big\},
\end{aligned}
\]
and
\[
\label{Ha2} \qquad H^2_a(0,1) :=  \big\{ u \in H^1_a(0,1) |\,au' \in
H^1(0,1)\big\};
\]

{\bf CASE (SD):}

\[
\begin{aligned}
H^1_a(0,1):=\big\{ u \in L^2(0,1) \ \mid  \,&u \text{ locally
absolutely continuous in } [0,x_0) \cup (x_0,1], \\ & \sqrt{a} u'
\in L^2(0,1) \text{ and } u(0)= u(1)=0 \big\}
\end{aligned}
\]
and
\[
\label{Ha2S} \qquad H^2_a(0,1) :=  \big\{ u \in H^1_a(0,1) |\,au'
\in H^1(0,1)\big\}.
\]
In both cases we consider the norms
\[
\|u\|^2_{H^1_a(0,1)}:= \|u\|^2_{L^2(0,1)} +
\|\sqrt{a}u'\|^2_{L^2(0,1)},\] and
\[\|u\|^2_{H^2_a(0,1)} :=
\|u\|^2_{H^1_a(0,1)} + \|(au')'\|^2_{L^2(0,1)}
\]
and we set
\[
D({\cal A})=H^2_a(0,1).
\]

The function $a$ playing a crucial role, it is non surprising
that the following lemma is crucial as well:
\begin{Lemma} \label{rem}
Assume that Hypothesis $\ref{Ass0}$ or $\ref{Ass01}$ is satisfied.
\begin{enumerate}
\item Then for all $\gamma \ge K$ the map
$$
\begin{aligned}
& x \mapsto \dfrac{|x-x_0|^\gamma}{a} \mbox { is nonincreasing on
the left of } x=x_0 \\
& \mbox{and nondecreasing on the right of }
x=x_0,\\
&\mbox{ so that }\lim_{x\to x_0}\dfrac{|x-x_0|^\gamma}{a}=0 \mbox{
for all }\gamma>K.
\end{aligned}
$$
\item If $K<1$, then
    $\displaystyle\frac{1}{a} \in L^{1}(0,1)$.\\
\item If $K \in[1,2)$, then $\displaystyle \frac{1}{\sqrt{a}} \in
    L^{1}(0,1)$ and $\displaystyle \frac{1}{a}\not \in L^1(0,1)$.
\end{enumerate}
\end{Lemma}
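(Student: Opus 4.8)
The key tool will be the hypothesis $(x-x_0)a' \le Ka$ on $[0,1]\setminus\{x_0\}$. The plan is to exploit this differential inequality to compare $a$ with powers of $|x-x_0|$, separately on the left and on the right of $x_0$.

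\medskip

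\textbf{Step 1: The monotonicity claim.} Fix $\gamma \ge K$ and consider the function $g(x) := |x-x_0|^\gamma/a(x)$, which is $C^1$ on $[0,1]\setminus\{x_0\}$ since $a$ is. On the right of $x_0$ we have $|x-x_0|^\gamma = (x-x_0)^\gamma$, so
\[
g'(x) = \frac{\gamma (x-x_0)^{\gamma-1} a - (x-x_0)^\gamma a'}{a^2}
= \frac{(x-x_0)^{\gamma-1}}{a^2}\big(\gamma a - (x-x_0)a'\big).
\]
By the hypothesis $(x-x_0)a' \le Ka \le \gamma a$, the bracket is $\ge 0$, hence $g' \ge 0$ on $(x_0,1]$, i.e. $g$ is nondecreasing there. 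On the left of $x_0$ one has $|x-x_0|^\gamma = (x_0-x)^\gamma$, and an analogous computation gives
\[
g'(x) = \frac{(x_0-x)^{\gamma-1}}{a^2}\big(-\gamma a + (x-x_0)a'\big) = -\frac{(x_0-x)^{\gamma-1}}{a^2}\big(\gamma a - (x-x_0)a'\big) \le 0,
\]
so $g$ is nonincreasing on $[0,x_0)$. This proves the first assertion.

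\medskip

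\textbf{Step 2: The limit.} Now take $\gamma > K$. Pick $\beta$ with $K \le \beta < \gamma$; by Step 1 the function $|x-x_0|^\beta/a$ is monotone near $x_0$ (nonincreasing from the left, nondecreasing from the right), hence bounded by its value at some fixed reference point, say by a constant $M$. Then
\[
\frac{|x-x_0|^\gamma}{a(x)} = |x-x_0|^{\gamma-\beta}\cdot\frac{|x-x_0|^\beta}{a(x)} \le M\,|x-x_0|^{\gamma-\beta} \to 0 \quad \text{as } x \to x_0,
\]
since $\gamma - \beta > 0$. This gives the stated limit.

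\medskip

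\textbf{Step 3: Integrability of $1/a$ and $1/\sqrt a$.} Apply Step 1 with $\gamma = K$: the function $|x-x_0|^K/a$ is bounded near $x_0$ by a constant $M$, hence $1/a(x) \le M |x-x_0|^{-K}$ near $x_0$; away from $x_0$, $1/a$ is continuous and bounded. If $K<1$ then $|x-x_0|^{-K}$ is integrable, giving $1/a \in L^1(0,1)$; this is part (2). For part (3), when $K\in[1,2)$ we have $1/\sqrt{a(x)} \le \sqrt M |x-x_0|^{-K/2}$ near $x_0$ with $K/2 < 1$, so $1/\sqrt a \in L^1(0,1)$. For the non-integrability of $1/a$ when $K\ge 1$, use the other direction of the monotonicity: since $|x-x_0|^K/a$ is nonincreasing on the left and nondecreasing on the right, it is bounded below near $x_0$ by its value at a reference point, i.e. $1/a(x) \ge c\,|x-x_0|^{-K}$ near $x_0$ for some $c>0$; as $K\ge1$, $|x-x_0|^{-K}\notin L^1$, so $1/a \notin L^1(0,1)$.

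\medskip

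I do not anticipate a serious obstacle here: the whole lemma is a direct exploitation of the structural hypothesis via an elementary ODE-comparison argument. The only point requiring a little care is making sure the sign of the bracket $\gamma a - (x-x_0)a'$ is handled correctly on both sides of $x_0$ (note $(x-x_0)a'$ and $(x_0-x)a'$ differ in sign), and tracking where one needs a \emph{strict} inequality in the exponent to get the limit versus where $\gamma=K$ suffices for the monotonicity and the one-sided bounds used for integrability.
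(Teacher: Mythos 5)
Your Steps 1 and 2, and the integrability assertions ($1/a\in L^1$ for $K<1$, $1/\sqrt a\in L^1$ for $K\in[1,2)$), are correct and essentially the paper's own argument: differentiate $|x-x_0|^\gamma/a$, use $(x-x_0)a'\le Ka$ to get the sign, and convert the resulting monotonicity into the pointwise bound $1/a\le M|x-x_0|^{-K}$.

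There is, however, a genuine gap in your proof that $1/a\notin L^1$ when $K\ge1$. You claim that the monotonicity gives a \emph{lower} bound $|x-x_0|^K/a\ge c>0$ near $x_0$, hence $1/a\ge c|x-x_0|^{-K}$. But the monotonicity runs the wrong way for this: ``nondecreasing on the right of $x_0$'' means that, approaching $x_0$ from the right, the quantity $(x-x_0)^K/a$ \emph{decreases} toward its infimum, which may well be $0$ (and point 1 of the lemma itself tells you the analogous limit vanishes for every $\gamma>K$). Concretely, $a(x)=|x-x_0|^{1/2}$ satisfies $(x-x_0)a'=\tfrac12 a\le Ka$ with $K=1$, yet $|x-x_0|^K/a=|x-x_0|^{1/2}\to0$ and $1/a=|x-x_0|^{-1/2}\in L^1$; so the structural inequality alone cannot yield non-integrability, and your argument, which uses nothing else, cannot be repaired as stated. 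What rules out this example in the (SD) case is the additional hypothesis $a\in W^{1,\infty}(0,1)$, which you never invoke. The paper's proof uses exactly this: since $a$ is Lipschitz and $a(x_0)=0$, one has $a(x)=\int_{x_0}^x a'(s)\,ds\le \|a'\|_\infty|x-x_0|$, whence $1/a\ge c/|x-x_0|\notin L^1(0,1)$. You should replace the last part of your Step 3 by this argument.
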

\begin{proof} The first point is an easy consequence of the
assumption. Now, we prove  the second point: by the first part, it
follows that
 $$
\displaystyle\frac{|x-x_0|^{K}}{a(x)} \le
\max\left\{\displaystyle\frac{x_0^K}{a(0)},\displaystyle\frac{(1-x_0)^K}{a(1)}\right\}.
$$ Thus
\[
\frac{1}{a(x)} \le
\max\left\{\frac{x_0^K}{a(0)},\frac{(1-x_0)^K}{a(1)}\right\}\frac{1}{|x-x_0|^K}.
\]
Since $K<1$, the right-hand side of the last inequality is
integrable, and then $\displaystyle \frac{1}{a} \in L^{1}(0,1)$.
Analogously, one obtains the third point.

On the contrary, the fact that $a\in C^1([0,1])$, and $\displaystyle
\frac{1}{\sqrt{a}} \in L^{1}(0,1)$ implies that $\displaystyle
\frac{1}{a}\not \in L^1(0,1)$. Indeed, the assumptions on $a$ imply
that $a(x) = \displaystyle \int_{x_0}^xa'(s)ds\le C |x-x_0|$ for a
positive constant $C$. Thus for all $x\neq x_0$, $\displaystyle
\frac{1}{a(x)} \ge C \frac{1}{|x-x_0|} \not \in L^1(0,1)$.
\end{proof}

We immediately start using the lemma above, giving the following
characterizations for the (SD) case which are already given in
\cite[Propositions 2.3 and 2.4]{fggr}, but whose proofs we repeat
here to make precise some calculations.
\begin{Proposition}\label{characterization}
Let
\[
\begin{aligned} X:=\{ u \in L^2(0,1)\,| \ &u \text{ locally
absolutely continuous in } [0,1]\setminus \{x_0\},  \\ &\sqrt{a}u'
\in L^2(0,1), au \in H^1_0(0,1) \;\text{and} \\&
(au)(x_0)=u(0)=u(1)=0\}.
\end{aligned}
\]
Then, under Hypothesis $\ref{Ass01}$ we have
\[
H^1_a(0,1)=X.
\]
\end{Proposition}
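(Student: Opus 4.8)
The plan is to establish the two inclusions separately, noting that one of them is essentially trivial. For $X\subseteq H^1_a(0,1)$ there is nothing to do: recalling that $[0,1]\setminus\{x_0\}=[0,x_0)\cup(x_0,1]$, every condition in the definition of $H^1_a(0,1)$ in the (SD) case (that is, $u\in L^2(0,1)$, local absolute continuity off $x_0$, $\sqrt a\,u'\in L^2(0,1)$, and $u(0)=u(1)=0$) appears verbatim among the defining conditions of $X$. Thus the real content of the Proposition is that the two extra requirements built into $X$, namely $au\in H^1_0(0,1)$ and $(au)(x_0)=0$, come for free for any $u\in H^1_a(0,1)$; this is the inclusion $H^1_a(0,1)\subseteq X$, and that is where I would put all the work.

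So I would fix $u\in H^1_a(0,1)$ and check those two facts. First I would observe that $au$ is locally absolutely continuous on $[0,x_0)$ and on $(x_0,1]$, being the product of $a$ (which is $C^1$ there) with $u$, and hence satisfies $(au)'=a'u+au'$ a.e.\ on $(0,1)$. Next I would check $(au)'\in L^2(0,1)$: indeed $a'\in L^\infty(0,1)$ by Hypothesis \ref{Ass01} and $u\in L^2(0,1)$, so $a'u\in L^2(0,1)$; and $au'=\sqrt a\,(\sqrt a\,u')$ with $\sqrt a$ bounded (as $a\in W^{1,\infty}(0,1)\subset C[0,1]$) and $\sqrt a\,u'\in L^2(0,1)$, so $au'\in L^2(0,1)$. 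Since also $au\in L^2(0,1)$, this gives $au\in H^1(0,x_0)$ and $au\in H^1(x_0,1)$, so the one-sided limits $\ell^-:=\lim_{x\to x_0^-}(au)(x)$ and $\ell^+:=\lim_{x\to x_0^+}(au)(x)$ exist, because a $W^{1,2}$ function on a bounded interval has an absolutely continuous representative up to the endpoints.

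The crucial step is to show $\ell^-=\ell^+=0$. I would argue by contradiction: if, say, $\ell^-\neq 0$, then $|a(x)u(x)|\ge|\ell^-|/2$ in a left neighbourhood of $x_0$, hence $|u(x)|\ge|\ell^-|/(2a(x))$ there. But, exactly as in the proof of Lemma \ref{rem}, $a(x)=\int_{x_0}^x a'(s)\,ds\le C|x-x_0|$ with $C=\|a'\|_{L^\infty(0,1)}$, so $|u(x)|^2\ge|\ell^-|^2/(4C^2|x-x_0|^2)$ near $x_0^-$, which is not integrable, contradicting $u\in L^2(0,1)$. Hence $\ell^-=0$, and symmetrically $\ell^+=0$. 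Consequently $au$, assigned the value $0$ at $x_0$, is continuous on $[0,1]$ and locally absolutely continuous off $x_0$ with derivative in $L^2(0,1)\subset L^1(0,1)$, hence is absolutely continuous on $[0,1]$; combined with $(au)(0)=a(0)u(0)=0$ and $(au)(1)=a(1)u(1)=0$ this gives $au\in H^1_0(0,1)$, and $(au)(x_0)=0$ by construction. This proves $H^1_a(0,1)\subseteq X$, hence the equality.

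The main obstacle is precisely the vanishing of the one-sided traces $\ell^\pm$ at $x_0$; everything else is routine bookkeeping with the product rule and with $L^\infty\times L^2$ products. That step relies on the quantitative bound $a(x)\le C|x-x_0|$, which is available in the (SD) case exactly because $a\in W^{1,\infty}(0,1)$ — this is the only place the extra regularity of $a$ (compared with the (WD) hypothesis) enters. A minor point to be careful about is that ``locally absolutely continuous on $[0,x_0)$'' must be read as absolute continuity on every $[0,\beta]$ with $\beta<x_0$, which is what makes $au\in H^1(0,x_0)$ (and not merely $H^1_{\mathrm{loc}}$) and gives meaning both to the trace $\ell^-$ and to the boundary value $u(0)=0$.
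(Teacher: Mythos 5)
Your proof is correct and follows essentially the same route as the paper: the only nontrivial point is the vanishing of the one-sided limits of $au$ at $x_0$, which you obtain, as the paper does, by contradiction from $u\in L^2(0,1)$ together with the bound $a(x)\le \|a'\|_{L^\infty(0,1)}|x-x_0|$ available in the (SD) case. The single cosmetic difference is that you contradict integrability of $u^2$ directly via $1/|x-x_0|^2\notin L^1$ near $x_0$, whereas the paper passes through $u^2\ge C_1/a$ and the fact that $1/a\notin L^1(0,1)$ (Lemma \ref{rem}); both reduce to the same estimate on $a$.
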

\begin{proof}
Obviously, $X \subseteq H^1_a$. Now we take $u \in H^1_a$, and we
prove that $u \in X$. First, observe that $(au)(0)= (au)(1)=0$.
Moreover, since $a \in W^{1,\infty}(0,1)$, then $(au)' = a' u+ au' \in
L^2(0,1)$. Thus, for $x <x_0$, one has
\[
(au)(x)= \int_0^x(au)'(t)dt
\]
This implies that there exists $\lim_{x \rightarrow
x_0^-}(au)(x)=(au)(x_0) =\int_0^{x_0}(au)'(t)dt= L \in \R$. If $L
\neq 0$, then there exists $C>0$ such that
\[
|(au)(x)| \ge C
\]
for all $x$ in a neighborhood of $x_0$, $x\neq x_0$. Thus, setting
$C_1:=\displaystyle \frac{C^2}{\max_{[0,1]}a(x)} > 0$, it follows
that
\[
|u^2(x)| \ge \frac{C^2}{a^2(x)}\ge \frac{C_1}{a(x)},
\]
for all $x$ in a left neighborhood of $x_0$, $x\neq x_0$. But, since
the operator is strongly degenerate, $\displaystyle \frac{1}{a} \not
\in L^1(0,1)$ thus $u \not \in L^2(0,1)$. Hence $L=0$. Analogously,
starting from
\[
(au)(x)=-\int_x^1(au)'(t)dt \quad \mbox{ for }x>x_0,
\]
one can prove that $\lim_{x \rightarrow x_0^+}(au)(x)=(au)(x_0) =0$
and thus $(au)(x_0)=0$. From this it also easily follows that
$(au)'$ is the distributional derivative of $au$, and so $au\in
H^1_0(0,1)$, i.e. $u\in X$.
\end{proof}

Using the previous result, one can prove the following additional
characterization.
\begin{Proposition}\label{domain}
Let
\[
\begin{aligned}
D:=\{ u \in L^2(0,1) \ & \ u \text{ locally
absolutely continuous in } [0,1]\setminus \{x_0\}, \\ & au \in
H^1_0(0,1), a u' \in H^1(0,1),  au \text{ is continuous at } x_0 \;\text{and}\\& (au)(x_0)=(au')(x_0)=u(0)=u(1)=0
\}.
\end{aligned}
\]
Then, under Hypothesis $\ref{Ass01}$ we have
\[
H^2_a(0,1)=D({\cal A})=D.
\]
\end{Proposition}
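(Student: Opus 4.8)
The plan is to prove the two inclusions $H^2_a(0,1)\subseteq D$ and $D\subseteq H^2_a(0,1)$ separately, in each case reducing the $H^1_a(0,1)$-membership to Proposition~\ref{characterization} so that the only genuinely new point concerns the behaviour of $au'$ at the degeneracy point $x_0$. For the first inclusion I would take $u\in H^2_a(0,1)$; then $u\in H^1_a(0,1)=X$ by Proposition~\ref{characterization}, which already gives that $u$ is locally absolutely continuous on $[0,1]\setminus\{x_0\}$, that $au\in H^1_0(0,1)$ (hence $au\in C([0,1])$, in particular continuous at $x_0$), and that $(au)(x_0)=u(0)=u(1)=0$; moreover $au'\in H^1(0,1)$ by definition of $H^2_a$. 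So the only missing property is $(au')(x_0)=0$. I would prove this by contradiction: since $au'\in H^1(0,1)\hookrightarrow C([0,1])$, the value $\ell:=(au')(x_0)$ equals $\lim_{x\to x_0}(au')(x)$, and if $\ell\neq 0$ then $|au'(x)|\ge C>0$ on a neighbourhood of $x_0$, whence $a(x)(u'(x))^2=(au'(x))^2/a(x)\ge C^2/a(x)$; since in the (SD) case $1/a\notin L^1(0,1)$ with the non-integrable singularity located exactly at $x_0$ (Lemma~\ref{rem}(3) and the estimate $a(x)\le C|x-x_0|$ from its proof), this contradicts $\sqrt a\,u'\in L^2(0,1)$. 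Hence $(au')(x_0)=0$ and $u\in D$.

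For the reverse inclusion I would take $u\in D$. Since $au'\in H^1(0,1)$ is part of the definition of $D$, it suffices to check $u\in H^1_a(0,1)$, and by Proposition~\ref{characterization} this reduces to $u\in X$; comparing the two definitions, every property defining $X$ is already assumed in the definition of $D$ except $\sqrt a\,u'\in L^2(0,1)$. To obtain this I would use $au'\in H^1(0,1)\hookrightarrow AC([0,1])$ together with $(au')(x_0)=0$ to write $(au')(x)=\int_{x_0}^x(au')'(s)\,ds$, so that Cauchy--Schwarz gives $|(au')(x)|\le\|(au')'\|_{L^2(0,1)}\,|x-x_0|^{1/2}$ and hence
\[
a(x)(u'(x))^2=\frac{(au'(x))^2}{a(x)}\le \|(au')'\|_{L^2(0,1)}^2\,\frac{|x-x_0|}{a(x)}.
\]
By Lemma~\ref{rem}(1) with $\gamma=K$ the quotient $|x-x_0|^{K}/a$ is bounded on $[0,1]$, so $|x-x_0|/a=|x-x_0|^{1-K}\cdot|x-x_0|^{K}/a\le C|x-x_0|^{1-K}$, and since $K\in[1,2)$ the exponent $1-K\in(-1,0]$ makes the right-hand side integrable near $x_0$; therefore $a(u')^2\in L^1(0,1)$, i.e. $\sqrt a\,u'\in L^2(0,1)$, so $u\in X=H^1_a(0,1)$ and, together with $au'\in H^1(0,1)$, $u\in H^2_a(0,1)=D({\cal A})$.

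I expect the main obstacle to be the integrability estimate for $|x-x_0|/a$ near $x_0$ in the reverse inclusion: the crude bound $a(u')^2\le(\sup_{[0,1]}|au'|)^2/a$ is useless because $1/a\notin L^1(0,1)$ in the (SD) case, so one must genuinely exploit the vanishing $(au')(x_0)=0$ to gain the extra factor $|x-x_0|^{1/2}$, and then balance the exponents using the structural monotonicity of Lemma~\ref{rem}(1) together with the standing hypothesis $K<2$. The forward inclusion is easier, but one still has to be careful that the contradiction is real, namely that the failure of integrability of $1/a$ is concentrated precisely at $x_0$ rather than somewhere else — which is exactly what the final estimate in the proof of Lemma~\ref{rem} provides.
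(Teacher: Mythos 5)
Your proposal is correct, and the two inclusions deserve separate comments. For $H^2_a(0,1)\subseteq D$ your argument is essentially the paper's: both assume $(au')(x_0)=\ell\neq 0$, deduce $a(u')^2\ge C^2/a$ near $x_0$, and contradict $\sqrt{a}\,u'\in L^2(0,1)$ via the non-integrability of $1/a$ at $x_0$ from Lemma \ref{rem}. For the reverse inclusion $D\subseteq H^2_a(0,1)$ you take a genuinely different and more direct route. The paper integrates $(au')'u$ by parts on $(x,1]$ and on $[0,x)$, shows by a contradiction argument that the boundary term $(au'u)(x)$ tends to $0$ as $x\to x_0$ (invoking an external estimate, \cite[Lemma 2.5]{fggr}, which gives $|(au')(x)|\le C|x-x_0|^{1/2}$), and concludes $\int_0^1 a(u')^2=-\int_0^1(au')'u<\infty$. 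You instead derive that same pointwise bound $|(au')(x)|\le\|(au')'\|_{L^2}|x-x_0|^{1/2}$ directly from $(au')(x_0)=0$ and Cauchy--Schwarz, and then close the estimate with the monotonicity of $|x-x_0|^K/a$ from Lemma \ref{rem}(1) and the standing hypothesis $K<2$, obtaining $a(u')^2\le C|x-x_0|^{1-K}\in L^1$ near $x_0$. Your version is more elementary and self-contained (no appeal to the cited lemma, no limit of a possibly infinite boundary term), and it makes explicit exactly where $K<2$ enters; what the paper's route buys in exchange is the identity $\int_0^1(au')'u=-\int_0^1 a(u')^2$, i.e.\ the integration-by-parts formula without boundary contributions, which is of independent use (e.g.\ for the dissipativity of $\mathcal{A}$). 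Both arguments are sound.
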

\begin{proof}
{$\boldsymbol{D\subseteq D({\cal A}):}$} Let $u \in D$. It is
sufficient to prove that $\sqrt{a}u' \in L^2(0,1)$. Since $au' \in
H^1(0,1)$ and $u(1)=0$ (recall that $a >0$ in $[0,1]\setminus
\{x_0\}$), for $x \in (x_0,1]$ we have
\[
\int_x^1[(au')'u](s)ds= [au'u]_x^1-\int_x^1(a(u')^2)(s)ds =
-(au'u)(x) -\int_x^1(a(u')^2)(s)ds.
\]
Thus
\[
(au'u)(x)=-\int_x^1[(au')'u](s)ds-\int_x^1(a(u')^2)(s)ds.
\]
Since $u \in D$, $(au')'u\in L^1(0,1)$. Hence, there exists
\[
\lim_{x \rightarrow x_0^+}(au'u)(x) =L\in[-\infty , +\infty),
\]
since no integrability is known about $a(u')^2$ and such a limit
could be $-\infty$. If $L\neq 0$, there exists $C>0$ such that
\[
|(au'u)(x)|\ge C
\]
for all $x$ in a right neighborhood of $x_0$, $x \neq x_0$. Thus, by
\cite[Lemma 2.5]{fggr}, there exists $C_1
>0$ such that
\[
|u(x)| \ge \frac{C}{|(au')(x)|} \ge\frac{C_1}{\sqrt{(x-x_0)}}
\]
for all $x$ in a right neighborhood of $x_0$, $x\neq x_0$. This
implies that $u \not\in L^2(0,1)$. Hence $L=0$ and
\begin{equation}\label{int}
\int_{x_0}^1[(au')'u](s)ds=-\int_{x_0}^1(a(u')^2)(s)ds.
\end{equation}
If $x\in [0, x_0)$, proceeding as before and using the condition
$u(0)=0$, it follows that
\begin{equation}\label{int1}
\int_0^{x_0}[(au')'u](s)ds=-\int_0^{x_0}(a(u')^2)(s)ds.
\end{equation}
By \eqref{int} and \eqref{int1}, it follows that
\[
\int_0^1[(au')'u](s)ds=-\int_0^1(a(u')^2)(s)ds.
\]
\noindent Since $(au')'u\in L^1(0,1)$, then $\sqrt{a}u' \in
L^2(0,1)$. Hence, $D \subseteq D({\cal A})$.
\\
{$\boldsymbol {D({\cal A}) \subseteq D:}$} Let $u \in D({\cal A})$.
By Proposition \ref{characterization}, we know that $au \in
H^1_0(0,1)$ and $(au)(x_0)=0$. Thus, it is sufficient to prove that
$(au')(x_0)=0$. Toward this end, observe that, since $au' \in
H^1(0,1)$, there exists $L\in \R$ such that $\displaystyle \lim_{x
\rightarrow x_0}(au')(x)=(au')(x_0)= L$. If $L \neq 0$, there exists
$C>0$ such that
\[
|(au')(x)| \ge C,
\]
for all $x$ in a neighborhood of $x_0$. Thus
\[
|(a(u')^2)(x)| \ge \frac{C^2}{a(x)},
\]
for all $x$ in a neighborhood of $x_0$, $ x \neq x_0$. By Lemma
\ref{rem}, this implies that $\sqrt{a}u' \not \in L^2(0,1)$. Hence
$L=0$, that is $(au')(x_0)=0$.
\end{proof}

Now, let us go back to problem \eqref{linear}, recalling the
following
\begin{Definition}\label{def_nondiv}
If $u_0 \in L^2(0,1)$ and $h\in L^2(Q_T)$, a function $u$ is said to
be a (weak) solution of \eqref{linear} if
\[
u \in C([0, T]; L^2(0,1)) \cap L^2(0, T; H^1_a(0,1))
\]
and
\[
\begin{aligned}
&\int_0^1u(T,x)\varphi(T,x)\, dx - \int_0^1 u_0(x) \varphi(0,x)\, dx
- \int_{Q_T}u\varphi_t \,dxdt =
\\&- \int_{Q_T} au_x
\varphi_x\,dxdt + \int_{Q_T} h\varphi \chi_\omega\,dx dt
\end{aligned}
\]
for all $\varphi \in H^1(0, T; L^2(0,1)) \cap L^2(0, T;
H^1_a(0,1))$.
\end{Definition}
As proved in \cite{fggr} (see Theorems $2.2$, $2.7$ and $4.1$), problem \eqref{linear} is well-posed in
the sense of the following theorem:
\begin{Theorem}\label{th-parabolic} For
all $h \in  L^2(Q_T)$ and $u_0 \in L^2(0,1)$, there exists a unique
weak solution  $u \in C([0,T]; L^2(0,1)) \cap L^2 (0,T; H^1_a(0,1))$
of \eqref{linear} and there exists a universal positive constant $C$
such that
\begin{equation}\label{stima}
\sup_{t \in [0,T]} \|u(t)\|^2_{L^2(0,1)}+\int_0^T\|u(t)\|^2_{H^1_a
(0,1)} dt \le C(\|u_0\|^2_{L^2(0,1)}+\|h\|^2_{L^2(Q_T)}).
\end{equation}
Moreover, if $u_0\in H^1_a(0,1)$, then
\begin{equation}\label{regularity}
u \in H^1(0,T; L^2(0,1))\cap C([0,T]; H^1_a(0,1)) \cap L^2(0,T;
H^2_a(0,1)),
\end{equation}
 and there exists a universal positive constant $C$ such
that
\begin{equation}\label{stima1}
\begin{aligned}
\sup_{t \in [0,T]}\left(\|u(t)\|^2_{H^1_a(0,1)} \right)&+ \int_0^{T}
\left(\left\|u_t\right\|^2_{L^2(0,1)} +
\left\|(au_x)_x\right\|^2_{L^2(0,1)}\right)dt\\&\le C
\left(\|u_0\|^2_{H^1_a(0,1)} + \|h\|^2_{L^2(Q_T)}\right).
\end{aligned}
\end{equation}
Moreover, $\mathcal A$ generates an analytic semigroup on
$L^2(0, 1)$.
\end{Theorem}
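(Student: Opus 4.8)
The natural strategy is to combine semigroup theory with energy (multiplier) estimates, as in \cite{fggr}. First I would introduce the symmetric bilinear form
\[
\mathfrak a(u,v):=\int_0^1 a\,u'v'\,dx,\qquad u,v\in H^1_a(0,1),
\]
and check the structural prerequisites: that $H^1_a(0,1)$ is a Hilbert space, that it embeds (compactly) into $L^2(0,1)$, that smooth functions vanishing near $x_0$ and at the endpoints are dense in it, and that $\mathfrak a$ is continuous and coercive in the sense $\mathfrak a(u,u)+\|u\|^2_{L^2(0,1)}=\|u\|^2_{H^1_a(0,1)}$. By the representation theorem for closed symmetric forms, $\mathfrak a$ is associated with a nonnegative self-adjoint operator on $L^2(0,1)$, and the content of Propositions \ref{characterization} and \ref{domain} in the (SD) case (the (WD) case being simpler) is precisely that this operator is $-\mathcal A$ with domain $H^2_a(0,1)$: the integration by parts $\mathfrak a(u,v)=-\int_0^1(au')'\,v\,dx$ is legitimate because the traces $(au)(x_0)$ and $(au')(x_0)$ vanish for functions in the relevant spaces. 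Hence $\mathcal A$ is dissipative and self-adjoint, so by the spectral theorem it generates an analytic contraction semigroup $(e^{t\mathcal A})_{t\ge 0}$ on $L^2(0,1)$, which already establishes the last assertion of the theorem.

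Next, for $u_0\in L^2(0,1)$ and $h\in L^2(Q_T)$ I would define the solution by Duhamel's formula
\[
u(t)=e^{t\mathcal A}u_0+\int_0^t e^{(t-\tau)\mathcal A}\big(h(\tau)\chi_\omega\big)\,d\tau,
\]
which lies in $C([0,T];L^2(0,1))$ and, by a routine check, satisfies the identity in Definition \ref{def_nondiv}. The membership $u\in L^2(0,T;H^1_a(0,1))$ and estimate \eqref{stima} come from the basic energy identity obtained by testing the equation with $u$,
\[
\frac12\frac{d}{dt}\|u(t)\|^2_{L^2(0,1)}+\int_0^1 a\,u_x^2\,dx=\int_0^1 h\chi_\omega u\,dx,
\]
followed by Young's inequality, integration in $t$, and Gronwall's lemma; taking $u_0=0$, $h=0$ here also gives uniqueness. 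For $u_0\in H^1_a(0,1)$, testing instead with $u_t$ (equivalently, using $\tfrac12\frac{d}{dt}\mathfrak a(u,u)=\langle\mathcal A u,u_t\rangle=\langle u_t-h\chi_\omega,-u_t\rangle$, so that the critical boundary term at $x_0$ is absorbed into the form–operator relation) yields
\[
\|u_t(t)\|^2_{L^2(0,1)}+\frac12\frac{d}{dt}\int_0^1 a\,u_x^2\,dx=\int_0^1 h\chi_\omega u_t\,dx\le\frac12\|h\|^2_{L^2(0,1)}+\frac12\|u_t\|^2_{L^2(0,1)}.
\]
Integrating gives control of $\sup_{[0,T]}\|\sqrt a\,u_x\|^2_{L^2(0,1)}$ and of $\int_0^T\|u_t\|^2_{L^2(0,1)}\,dt$, and reading $(au_x)_x=u_t-h\chi_\omega$ off the equation controls $\int_0^T\|(au_x)_x\|^2_{L^2(0,1)}\,dt$; together with a standard interpolation argument this yields \eqref{regularity} and \eqref{stima1}. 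All of these manipulations should first be carried out for $u_0\in D(\mathcal A)$ and, say, $h\in C^1([0,T];L^2(0,1))$, where the solution is strong and everything is justified, and then extended to general data by density using the a priori bounds just derived.

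The one genuinely delicate point — the reason the result is not a mere citation of classical parabolic theory — is the interior degeneracy at $x_0$. Every integration by parts above produces a priori uncontrolled traces of $au$, $au_x$ and $a\,u_x^2$ at $x=x_0$, and the whole scheme collapses unless these vanish. This is exactly where Lemma \ref{rem} (integrability of $1/a$ for $K<1$, of $1/\sqrt a$ for $1\le K<2$, and monotonicity of $|x-x_0|^\gamma/a$) and the domain characterizations of Propositions \ref{characterization}--\ref{domain} enter: they force $(au)(x_0)=(au_x)(x_0)=0$ for $u$ in the relevant spaces, so no boundary contribution at $x_0$ survives. Once this is secured, the remaining steps are the familiar ones of linear parabolic theory, and I expect the careful analysis near $x_0$ to be the only real obstacle.
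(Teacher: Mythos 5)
The paper gives no proof of this theorem: it is quoted directly from \cite{fggr} (Theorems 2.2, 2.7 and 4.1), so there is nothing internal to compare against. Your outline --- self-adjointness and dissipativity of $\mathcal A$ via the closed symmetric form $\mathfrak a$ on $H^1_a(0,1)$, identification of the operator domain with $H^2_a(0,1)$ using the vanishing traces $(au)(x_0)=(au')(x_0)=0$, generation of an analytic semigroup, Duhamel's formula, and the two energy estimates obtained by testing with $u$ and with $u_t$ after regularizing the data --- is precisely the route taken in that reference and is essentially correct. One caveat: among your ``structural prerequisites'' you list the density in $H^1_a(0,1)$ of smooth functions vanishing near $x_0$; this is \emph{false} in the weakly degenerate case, since there $1/a\in L^1(0,1)$, so any $H^1_a$-limit of functions vanishing near $x_0$ must itself vanish at $x_0$ (the point has positive capacity for the form), whereas generic elements of $H^1_a(0,1)$ do not. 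Fortunately you never use this density --- only the closedness of the form on $H^1_a(0,1)$ and the trace identities of Propositions \ref{characterization} and \ref{domain} --- so the argument stands without it.
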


So far we have introduced all the tools which will let us deal with
solutions of problem \eqref{linear}, also with additional
regularity. Now, we conclude this section with an essential tool for
proving Carleman estimates and observability inequalities, that is a
new weighted Hardy--Poincar\'e inequality for functions which may
{\em not be} globally absolutely continuous in the domain, but whose
irregularity point is ``controlled'' by the fact that the weight
degenerates exactly there.
\begin{Proposition}[Hardy--Poincar\'{e} inequality]\label{HP}
Assume that $p \in C([0,1])$, $p>0$ on $[0,1]\setminus \{x_0\}$,
$p(x_0)=0$ and there exists $q \in (1,2)$ such that the function
$$
\begin{aligned}
x \longrightarrow \dfrac{p(x)}{|x-x_0|^{q}} &\mbox { is
nonincreasing on the left of } x=x_0 \\
& \mbox{ and nondecreasing on the right of } x=x_0.
\end{aligned}
$$
\noindent Then, there exists a constant $C_{HP}>0$ such that for any
function $w$, locally absolutely continuous on $[0,x_0)\cup (x_0,1]$
and satisfying
$$
w(0)=w(1)=0 \,\, \mbox{and } \int_0^1 p(x)|w^{\prime}(x)|^2 \,dx <
+\infty \,,
$$ the following inequality holds:
\begin{equation}\label{hardy1}
\int_0^1 \dfrac{p(x)}{(x-x_0)^2}w^2(x)\, dx \leq C_{HP}\, \int_0^1
p(x) |w^{\prime}(x)|^2 \,dx.
\end{equation}
\end{Proposition}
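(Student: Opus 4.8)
The plan is to split the integral at the degeneracy point $x_0$ and prove the inequality separately on $(0,x_0)$ and on $(x_0,1)$, since on each side $w$ is genuinely absolutely continuous up to one endpoint where it vanishes. On the right interval $(x_0,1)$, I would introduce the substitution that trivializes the monotonicity hypothesis: write $p(x)=(x-x_0)^q g(x)$ where $g$ is nondecreasing on the right of $x_0$ and positive on $(x_0,1]$. Then the claimed inequality on $(x_0,1)$ reads $\int_{x_0}^1 (x-x_0)^{q-2} g(x) w^2\,dx \le C \int_{x_0}^1 (x-x_0)^q g(x) |w'|^2\,dx$. Because $g$ is monotone it can be pulled out of the weighting in the right direction (on the left half of the interval $g$ is small, on the right it is large), so one is reduced, after a careful comparison, to the classical one-dimensional Hardy inequality $\int_0^\ell t^{q-2} f^2\,dt \le \big(\tfrac{2}{q-1}\big)^2 \int_0^\ell t^q |f'|^2\,dt$ valid for $q>1$ and $f$ with $f(0)=0$ — note $q<2$ is not needed here, but $q>1$ is essential so that $t^{q-2}$ is integrable near $0$ against $f^2$ vanishing linearly. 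The left interval $(0,x_0)$ is handled symmetrically with $g$ nonincreasing.

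More concretely, on $(x_0,1)$ I would argue as follows. Fix $b\in(x_0,1)$. On $(x_0,b)$, monotonicity of $g$ gives $g(x)\le g(b)$ and $g(x)\ge \lim_{x\to x_0^+}g(x)=:g_0\ge 0$; if $g_0>0$ the standard Hardy inequality applied with weight $(x-x_0)^q$ after bounding $g$ above and below by constants closes the estimate on $(x_0,b)$ directly. The subtle case is $g_0=0$, i.e. $p$ degenerates faster than $(x-x_0)^q$; then one cannot bound $g$ below by a positive constant, and the right approach is to \emph{not} extract $g$ but instead run the integration-by-parts proof of Hardy's inequality directly with the weight $(x-x_0)^q g(x)$, using that $\frac{d}{dx}\big[(x-x_0)^{q-1}g(x)\big] \ge (q-1)(x-x_0)^{q-2}g(x)$ precisely because $g$ is nondecreasing — this is where the monotonicity hypothesis is used in its sharp form. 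Concretely, writing $w^2(x) = 2\int_{x_0}^x w w'\,dt$ (legitimate since $w(x_0^+)$ exists and must be $0$: if it were nonzero then $\int_{x_0} (x-x_0)^{q-2}w^2 = \infty$ would force, via $p(x)\ge c(x-x_0)^q$ only when $g_0>0$ — in the $g_0=0$ case one instead uses that $w(x_0^+)=0$ follows from $\int p|w'|^2<\infty$ together with $1/\sqrt{p}\notin L^1$ near $x_0$, an argument parallel to the one in Proposition \ref{characterization}), and then integrating by parts with the antiderivative $(x-x_0)^{q-1}g(x)/(q-1)$ and applying Cauchy--Schwarz.

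I would then patch the two halves together. The only thing to check in the patching is that no boundary term at $x_0$ is lost: the integration by parts on $(x_0,b)$ produces a term like $\big[(x-x_0)^{q-1}g(x)w^2(x)\big]$ evaluated near $x_0$, and since $w^2(x)=O\big(\int_{x_0}^x|w'|\big)^2$ and $(x-x_0)^{q-1}\to 0$, this boundary contribution vanishes as $x\to x_0^+$; similarly at $x\to b^-$ one chooses $b$ or lets $b\to 1^-$ using $w(1)=0$. Adding the resulting inequalities on $(0,x_0)$ and $(x_0,1)$ gives \eqref{hardy1} with $C_{HP}$ depending only on $q$ and on $\sup_{[0,1]} p/|x-x_0|^q$ (finite by continuity of $p$ away from $x_0$ and the monotonicity near $x_0$).

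The main obstacle is the case where $p$ degenerates strictly faster than $|x-x_0|^q$ near $x_0$ (so the natural comparison constant blows up): there one must avoid comparing $p$ with a pure power and instead carry the weight $p$ itself through the integration-by-parts argument, leaning on the differential inequality for $(x-x_0)^{q-1}g(x)$ that the monotonicity of $g=p/|x-x_0|^q$ encodes. A secondary technical point, needed to justify the integration by parts, is establishing that $w$ has a (necessarily zero) trace at $x_0$ from each side; this is where the hypotheses $q<2$ and $p(x_0)=0$ interact — $q<2$ guarantees $(x-x_0)^{-q/2}\in L^2$ fails, hence $1/\sqrt p \notin L^1$ near $x_0$, which together with $\int_0^1 p|w'|^2<\infty$ forces $w(x_0^\pm)=0$ by an argument of the type used in Proposition \ref{characterization}.
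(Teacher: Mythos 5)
Your reduction to the ``classical'' Hardy inequality is anchored at the wrong endpoint, and this is fatal to the argument as written. For the weight $t^{q}$ with $q>1$, the one--dimensional Hardy inequality $\int_0^\ell t^{q-2}f^2\,dt\le \bigl(\tfrac{2}{q-1}\bigr)^2\int_0^\ell t^{q}|f'|^2\,dt$ holds for $f(\ell)=0$, \emph{not} for $f(0)=0$: taking $f\equiv 1$ on $[\delta,\ell]$ with a linear ramp on $[0,\delta]$ makes the right-hand side $O(\delta^{q-1})\to 0$ while the left-hand side stays bounded away from $0$. Consequently the strategy of writing $w^2(x)=2\int_{x_0}^x ww'\,dt$ and integrating by parts outward from $x_0$ cannot close; the correct anchor on $(x_0,1)$ is $w(1)=0$, and on $(0,x_0)$ it is $w(0)=0$. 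This is exactly what the paper does: on $(x_0+\ve,1)$ it writes $w(x)=-\int_x^1w'(y)\,dy$, applies Cauchy--Schwarz with an auxiliary exponent $\beta\in(1,q)$, uses Fubini, invokes the monotonicity of $p/|x-x_0|^q$ to compare $p(x)/(x-x_0)^q$ with $p(y)/(y-x_0)^q$ for $x<y$, and then lets $\ve\to0$; no trace of $w$ at $x_0$ is ever needed.

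Your justification of $w(x_0^\pm)=0$ is also incorrect, and the claim itself is false in general. The monotonicity hypothesis gives $p(x)\le C|x-x_0|^q$ near $x_0$, hence $1/\sqrt{p}\ge c|x-x_0|^{-q/2}$; for $q<2$ this \emph{lower} bound is integrable near $x_0$, so $1/\sqrt p\notin L^1$ does not follow --- you have the implication backwards (it would follow for $q\ge 2$). Concretely, for $p=|x-x_0|^q$ with $q\in(1,2)$, a function $w$ equal to $1$ near $x_0$ and cut off to vanish at $0$ and $1$ satisfies every hypothesis and has nonzero one-sided traces at $x_0$; admissible $w$ may even be unbounded there (e.g. $w\sim|x-x_0|^{-\epsilon}$ with $0<\epsilon<(q-1)/2$). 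A secondary, fixable point is that $g=p/|x-x_0|^q$ is only monotone, so your differential inequality for $(x-x_0)^{q-1}g$ must be read in the Stieltjes/measure sense. The splitting at $x_0$ and the instinct to exploit the monotonicity of $p/|x-x_0|^q$ are right, but the proof must be rebuilt around vanishing at the outer endpoints $0$ and $1$.
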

\begin{proof}
Fix any $\beta \in (1, q)$ and $\ve>0$ small. Since $w(1)=0$,
applying H\"{older}'s inequality and Fubini's Theorem, we have
\[
\begin{aligned}
&\int_{x_0+\ve}^1 \frac {p(x)}{(x-x_0)^2}w^2(x)\, dx \\
&= \int_{x_0+\ve}^1 \dfrac{p(x)}{(x-x_0)^2} \Big(\int_x^1
((y-x_0)^{\beta/2}w^{\prime}(y)) (y-x_0)^{-\beta/2}\,dy \Big)^2\,
dx \\
& \leq \int_{x_0+\ve}^1 \dfrac{p(x)}{(x-x_0)^2} \Big(\int_x^1
(y-x_0)^{\beta}|w^{\prime}(y)|^2 \,dy \int_x^1 (y-x_0)^{-\beta}\,dy
\Big)\, dx
\\
&\leq \dfrac{1}{\beta-1}\int_{x_0+\ve}^1
\dfrac{p(x)}{(x-x_0)^{1+\beta}} \Big(\int_x^1
(y-x_0)^{\beta}|w^{\prime}(y)|^2 \,dy \Big)\, dx
 \\
&= \dfrac{1}{\beta-1}\int_{x_0+\ve}^1
(y-x_0)^{\beta}|w^{\prime}(y)|^2
\Big(\int_{x_0+\ve}^y \dfrac{p(x)}{(x-x_0)^{1+\beta}} \,dx\Big)\, dy \\
&= \dfrac{1}{\beta-1}\int_{x_0+\ve}^1
(y-x_0)^{\beta}|w^{\prime}(y)|^2 \Big(\int_{x_0+\ve}^y
\dfrac{p(x)}{(x-x_0)^{q}} (x-x_0)^{q-1-\beta}\,dx\Big)\, dy.
\end{aligned}
\]
Now, thanks to our hypothesis, we find
\[
\dfrac{p(x)}{(x-x_0)^{q}} \le \dfrac{p(y)}{(y-x_0)^{q}}, \quad
\forall \;x,\; y \in \; [x_0+\ve, 1], \; x <y.
\]
Thus
\begin{equation}\label{HP1}
\begin{aligned}
&\int_{x_0+\ve}^1 \frac {p(x)}{(x-x_0)^2}w^2(x)\, dx \\
&\le \dfrac{1}{\beta-1}\int_{x_0+\ve}^1
\dfrac{p(y)}{(y-x_0)^{q}}(y-x_0)^{\beta}|w^{\prime}(y)|^2
\Big(\int_{x_0+\ve}^y (x-x_0)^{q-1-\beta}\,dx\Big)\, dy\\
&= \dfrac{1}{(\beta-1)(q-\beta)}\int_{x_0+\ve}^1
\dfrac{p(y)}{(y-x_0)^{q}}(y-x_0)^{q-\beta}(y-x_0)^{\beta}|w^{\prime}(y)|^2
dy\\
&= \dfrac{1}{(\beta-1)(q-\beta)}\int_{x_0+\ve}^1
p(y)|w^{\prime}(y)|^2 dy.
\end{aligned}
\end{equation}
Now, proceeding as before, and using the fact that $w(0)=0$, one has
\[
\begin{aligned}
&\int_0^{x_0-\ve}\frac {p(x)}{(x_0-x)^2}w^2(x)\, dx  \\& \leq
\int_0^{x_0-\ve} \dfrac{p(x)}{(x_0-x)^2} \Big(\int_0^x
(x_0-y)^{\beta}|w^{\prime}(y)|^2 \,dy \int_0^x (x_0-y)^{-\beta}\,dy
\Big)\, dx
\\
&\leq \dfrac{1}{\beta-1}\int_0^{x_0-\ve}
\dfrac{p(x)}{(x_0-x)^{1+\beta}} \Big(\int_0^x
(x_0-y)^{\beta}|w^{\prime}(y)|^2 \,dy \Big)\, dx
 \\
&= \dfrac{1}{\beta-1}\int_0^{x_0-\ve}
(x_0-y)^{\beta}|w^{\prime}(y)|^2 \Big(\int_y^{x_0-\ve}
\dfrac{p(x)}{(x_0-x)^{q}} (x_0-x)^{q-1-\beta}\,dx\Big)\, dy.
\end{aligned}
\]
By assumption
\[
\dfrac{p(x)}{(x_0-x)^{q}} \le \dfrac{p(y)}{(x_0-y)^{q}}, \quad
\forall \; x,\;y \in [0,x_0-\ve], \;y<x.
\]
Hence,
\begin{equation}\label{HP2}
\begin{aligned}
&\int_0^{x_0-\ve} \frac {p(x)}{(x_0-x)^2}w^2(x)\, dx \\
&\le \dfrac{1}{\beta-1}\int_0^{x_0-\ve}
\dfrac{p(y)}{(x_0-y)^{q}}(x_0-y)^{\beta}|w^{\prime}(y)|^2
\Big(\int_y^{x_0-\ve} (x_0-x)^{q-1-\beta}\,dx\Big)\, dy\\
&=
\dfrac{1}{(\beta-1)(q-\beta)}\int_0^{x_0-\ve}p(y)|w^{\prime}(y)|^2
dy.
\end{aligned}
\end{equation}
Passing to the limit as $\ve\to 0$ and combining \eqref{HP1} and
\eqref{HP2}, the conclusion follows.
\end{proof}

\section{Carleman Estimate for Degenerate Parabolic
Problems}\label{Carleman estimate}

In this section we prove a crucial estimate of Carleman type, that
will be useful to prove an observability inequality for the adjoint
problem of \eqref{linear} in both the weakly and the strongly
degenerate cases. Thus, let us consider the problem
\begin{equation}\label{1}
\begin{cases}
v_t + \left(av_x \right) _x =h, & (t,x) \in (0,T) \times (0,1),\\
v(t,0)=v(t,1)=0, &  t \in (0,T),\\
\end{cases}
\end{equation}
where $a$ satisfies the following assumption:
\begin{Assumptions} \label{Ass02} The function $a$ satisfies Hypothesis $\ref{Ass0}$ or Hypothesis $\ref{Ass01}$
and there exists $\vartheta \in (0, K]$ such that the function
\begin{equation}\label{ass02}
\begin{aligned}
x \longrightarrow \dfrac{a(x)}{|x-x_0|^{\vartheta}} &\mbox { is
nonincreasing on the left of } x=x_0 \\
& \mbox{ and nondecreasing on
the right of } x=x_0.
\end{aligned}
\end{equation}
Here $K$ is the constant appearing in Hypothesis $\ref{Ass0}$ or
$\ref{Ass01}$, respectively.
\end{Assumptions}

\begin{Remark}\label{domfurbo}
Observe that if $x_0=0$, Hypothesis \ref{Ass02} is the same
introduced in \cite{acf} only for the strongly degenerate case. On
the other hand, here we have to require this additional assumption
also in the weakly degenerate case. This is due to the fact that in
this case we don't know if $u(x_0)=0$ for all $u \in H^1_a(0,1)$, as
it happens when $x_0=0$ and one imposes homogeneous Dirichlet
boundary conditions, as in \cite{acf}; indeed, the choice of
homogeneous Dirichlet boundary conditions in \cite{acf} helps in
controlling the function at the degeneracy point, while here we
don't require the corresponding condition $u(x_0)=0$, so that some
other condition is needed. However, in both cases, Hypothesis
\ref{Ass02} is satisfied if $a(x)= |x-x_0|^K$, with $K \in (0,2)$.
\end{Remark}

Now, let us introduce the function $\varphi(t,x):
=\Theta(t)\psi(x)$, where
\begin{equation}\label{c_1}
\Theta(t) := \frac{1}{[t(T-t)]^4} \quad \text{and} \quad
\psi(x) := c_1\left[\int_{x_0}^x \frac{y-x_0}{a(y)}dy- c_2\right],
\end{equation}
with $c_2> \displaystyle \max\left\{\frac{(1-x_0)^2}{a(1)(2-K)},
\frac{x_0^2}{a(0)(2-K)}\right\}$ and $c_1>0$. A more precise
restriction on $c_1$ will be needed later. Observe that $\Theta (t)
\rightarrow + \infty \, \text{ as } t \rightarrow 0^+, T^-$, and by
Lemma \ref{rem} we have that, if $x>x_0$,
\begin{equation}\label{defpsi}
\begin{aligned}
\psi(x)&\leq c_1 \left[ \int_{x_0}^x
\frac{(y-x_0)^K}{a(y)}\frac{1}{(y-x_0)^{K-1}}dy- c_2\right]\\
&\leq c_1\left[
\frac{(x-x_0)^K}{a(x)}\frac{(x-x_0)^{2-K}}{2-K}-c_2\right]\leq c_1
\left[
\frac{(1-x_0)^K}{a(1)}\frac{(1-x_0)^{2-K}}{2-K}-c_2\right]\\
&=c_1\left[ \frac{(1-x_0)^2}{(2-K)a(1)}-c_2\right]<0.
\end{aligned}
\end{equation}
In the same way one can treat the case $x\in[0,x_0)$, so that
\[
\psi(x)<0 \quad \mbox{ for every }x\in[0,1].
\]
Moreover, it is also easy to see that $\psi \ge -c_1c_2$.

\medskip
Our main result is thus the following.
\begin{Theorem}\label{Cor1}
Assume Hypothesis  $\ref{Ass02}$ and let $T>0$. Then, there exist
two positive constants $C$ and $s_0$, such that every solution $v$
of \eqref{1} in
\begin{equation}\label{v}
\mathcal{V}:=L^2\big(0, T; H^2_a(0,1)\big) \cap H^1\big(0,
T;H^1_a(0,1)\big)
\end{equation}
satisfies, for all $s \ge s_0$,
\[
\begin{aligned}
&\int_0^T\int_0^1 \left(s\Theta a(v_x)^2 + s^3 \Theta^3
\frac{(x-x_0)^2}{a} v^2\right)e^{2s\varphi}dxdt\\
&\le C\left(\int_0^T\int_0^1 |h|^{2}e^{2s\varphi}dxdt +
sc_1\int_0^T\left[a\Theta e^{2s \varphi}(x-x_0)(v_x)^2
dt\right]_{x=0}^{x=1}\right),
\end{aligned}
\]
where $c_{1}$ is the constant introduced in \eqref{c_1}.
\end{Theorem}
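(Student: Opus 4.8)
The plan is to follow the classical conjugation-and-splitting method for Carleman estimates, the interior degeneracy forcing extra care near $x_0$. I would set $w:=e^{s\varphi}v$; since $\Theta(t)\to+\infty$ as $t\to0^+,T^-$ and $\psi$ is negative and bounded, for $v\in\mathcal V$ the function $w$ and the combinations of $w,w_x,w_t$ arising below vanish at $t=0,T$, while $w(t,0)=w(t,1)=0$ by the boundary conditions. A direct computation turns \eqref{1} into $L_s^+w+L_s^-w=e^{s\varphi}h=:h_s$ with
\[
L_s^+w:=(aw_x)_x+s^2a\varphi_x^2\,w-s\varphi_t w,\qquad L_s^-w:=w_t-2sa\varphi_xw_x-s(a\varphi_x)_xw,
\]
$L_s^+$ being symmetric and $L_s^-$ skew-symmetric in $L^2(Q_T)$ (modulo boundary terms). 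Squaring and integrating,
\[
\|L_s^+w\|_{L^2(Q_T)}^2+\|L_s^-w\|_{L^2(Q_T)}^2+2\langle L_s^+w,L_s^-w\rangle_{L^2(Q_T)}=\|h_s\|_{L^2(Q_T)}^2,
\]
so the estimate reduces to bounding the cross term $\langle L_s^+w,L_s^-w\rangle$ from below by the two good terms plus the claimed boundary integral, together with a harmless use of $\|L_s^+w\|^2+\|L_s^-w\|^2$, and then reverting to $v$.

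The core is the expansion of $\langle L_s^+w,L_s^-w\rangle$ into nine double integrals, each handled by integration by parts. Since $w$ is only locally absolutely continuous on $[0,x_0)\cup(x_0,1]$, every spatial integration by parts is performed separately on $[0,x_0-\ve]$ and on $[x_0+\ve,1]$, the terms generated at $x=x_0\pm\ve$ being kept and sent to $0$ only at the end. Using $\psi_x=c_1\frac{x-x_0}{a}$, hence $a\varphi_x=c_1\Theta(x-x_0)$ and $(a\varphi_x)_x=c_1\Theta$, the four products $\langle(aw_x)_x,-2sa\varphi_xw_x\rangle$, $\langle(aw_x)_x,-s(a\varphi_x)_xw\rangle$, $\langle s^2a\varphi_x^2w,-2sa\varphi_xw_x\rangle$, $\langle s^2a\varphi_x^2w,-s(a\varphi_x)_xw\rangle$ yield, after integration by parts and the bound $(x-x_0)a'\le Ka$, the dominant contributions $(2-K)c_1\,s\!\int\!\!\int\Theta a w_x^2$ and $(2-K)c_1^3\,s^3\!\int\!\!\int\Theta^3\frac{(x-x_0)^2}{a}w^2$ (positive coefficients exactly because $K<2$), plus the boundary term $-sc_1\int_0^T[\Theta a(x-x_0)w_x^2]_{x=0}^{x=1}dt$. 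All remaining terms carry a time derivative $\Theta_t$ or $\Theta_{tt}$, or a lower power of $s$; using the elementary bounds $|\Theta_t|\le c\,\Theta^{5/4}$, $|\Theta_{tt}|\le c\,\Theta^{3/2}$ (this is why $\Theta$ carries the exponent $4$), Young's inequality, the fact that $\Theta$ is bounded below on $[0,T]$, a further integration by parts in $t$ for the cross terms in $w_t$, and, for the terms of the form $\!\int\!\!\int\Theta^{\gamma}w^2$, the weighted Hardy--Poincar\'e inequality of Proposition \ref{HP} applied at fixed $t$ (whose hypothesis on the weight is supplied by \eqref{ass02}), one absorbs everything into the good terms and a fraction of $\|L_s^+w\|^2+\|L_s^-w\|^2$, provided $c_1$ is fixed large and then $s_0=s_0(T,a,c_1)$ large.

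Two points need extra attention. The first --- the genuinely new one with respect to the boundary-degeneracy case of \cite{acf} --- is proving that every interior term produced at $x=x_0\pm\ve$ vanishes as $\ve\to0$: this is where Hypothesis \ref{Ass02}, Lemma \ref{rem} and the domain characterizations of Propositions \ref{characterization} and \ref{domain} enter, since for $v\in\mathcal V$ one has $(av)(x_0)=(av_x)(x_0)=0$ and the blow-up rates of $w$ and $aw_x$ near $x_0$ can then be controlled; I expect this bookkeeping to be the main obstacle (the terms at $t=0,T$ vanish because $\Theta\to+\infty$ superpolynomially while $v\in\mathcal V$). The second is undoing the substitution: from $w=e^{s\varphi}v$, $w_x=e^{s\varphi}(v_x+s\varphi_xv)$ one gets $w^2=e^{2s\varphi}v^2$ and $aw_x^2\ge\frac12 a e^{2s\varphi}v_x^2-s^2a\varphi_x^2 e^{2s\varphi}v^2$, and the spurious term $s^2a\varphi_x^2 e^{2s\varphi}v^2=c_1^2 s^2\Theta^2\frac{(x-x_0)^2}{a}e^{2s\varphi}v^2$ is absorbed by the good term $s^3\Theta^3\frac{(x-x_0)^2}{a}e^{2s\varphi}v^2$ for $s\ge s_0$ (enlarging $s_0$, as $\Theta$ is bounded below); the boundary term becomes $sc_1\int_0^T[a\Theta e^{2s\varphi}(x-x_0)(v_x)^2]_{x=0}^{x=1}dt$ because $v(t,0)=v(t,1)=0$, and $\|h_s\|_{L^2(Q_T)}^2=\int_0^T\int_0^1|h|^2e^{2s\varphi}\,dx\,dt$, giving exactly the stated inequality.
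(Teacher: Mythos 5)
Your proposal is correct and follows essentially the same route as the paper: the conjugation $w=e^{s\varphi}v$, the splitting into the same $L_s^+$ and $L_s^-$, the expansion of the cross term by integration by parts (justified near $x_0$ via the domain characterizations), the lower bound $(2-K)c_1\bigl(s\int\!\!\int\Theta a w_x^2+c_1^2s^3\int\!\!\int\Theta^3\tfrac{(x-x_0)^2}{a}w^2\bigr)$ coming from $(x-x_0)a'\le Ka$, the absorption of the $\dot\Theta$, $\ddot\Theta$ terms via $|\dot\Theta|\le c\Theta^{5/4}$, $|\ddot\Theta|\le c\Theta^{3/2}$ together with the weighted Hardy--Poincar\'e inequality with $p=(a|x-x_0|^4)^{1/3}$, the reduction of the boundary terms to $-sc_1\int_0^T[\Theta a(x-x_0)w_x^2]_{x=0}^{x=1}dt$, and the final reversion to $v$. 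The only cosmetic differences are that the paper simply drops $\|L_s^+w\|^2+\|L_s^-w\|^2\ge 0$ rather than using them for absorption, and that it does not require $c_1$ large at this stage (largeness of $s$ suffices; $c_1$ is only constrained later, in the observability argument).
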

\subsection{Proof of Theorem \ref{Cor1}}

For $s> 0$, define the function
\[
w(t,x) := e^{s \varphi (t,x)}v(t,x),
\]
where $v$ is any solution of \eqref{1} in $\mathcal{V}$; observe
that, since $v\in\mathcal{V}$ and $\psi<0$, then $w\in\mathcal{V}$.
Of course, $w$ satisfies
\begin{equation}\label{1'}
\begin{cases}
(e^{-s\varphi}w)_t + \left(a(e^{-s\varphi}w)_x \right) _x  =h, & (t,x) \in (0,T) \times (0,1),\\
w(t,0)=w(t,1)=0, &  t \in (0,T),\\ w(T^-,x)= w(0^+, x)= 0, & x \in
(0,1).
\end{cases}
\end{equation}
The previous problem can be recast as follows. Set
\[
Lv:= v_t + (av_x)_x \quad \text{and} \quad
L_sw= e^{s\varphi}L(e^{-s\varphi}w), \quad s  > 0.
\]
Then \eqref{1'} becomes
\[
\begin{cases}
L_sw= e^{s\varphi}h,\\
w(t,0)=w(t,1)=0, & t \in (0,T),\\
w(T^-,x)= w(0^+, x)= 0, & x \in (0,1).
\end{cases}
\]
Computing $L_sw$, one has
\[
\begin{aligned}
L_sw
=L^+_sw + L^-_sw,
\end{aligned}
\]
where
\[
L^+_sw := (aw_x)_x
 - s \varphi_t w + s^2a (\varphi_x)^2 w,
\]
and
\[
L^-_sw := w_t -2sa\varphi_x w_x -
 s(a\varphi_x)_xw.
\]
Moreover,
\begin{equation}\label{stimetta}
\begin{aligned}
2\langle L^+_sw, L^-_sw\rangle &\le 2\langle L^+_sw, L^-_sw\rangle+
\|L^+_sw \|_{L^2(Q_T)}^2 + \|L^-_sw\|_{L^2(Q_T)}^2\\
& =\| L_sw\|_{L^2(Q_T)}^2= \|he^{s\varphi}\|_{L^2(Q_T)}^2,
\end{aligned}
\end{equation}
where $\langle\cdot, \cdot \rangle$ denotes the
usual scalar product in $L^2(Q_T)$. As usual, we will separate the
scalar product $\langle L^+_sw, L^-_sw\rangle$ in distributed terms
and boundary terms.
\begin{Lemma}\label{lemma1}
The following identity holds:
\begin{equation}\label{D&BT}
\left.
\begin{aligned}
&\langle L^+_sw,L^-_sw\rangle \;\\&=\; \frac{s}{2} \int_0^T \int_0^1
\varphi_{tt} w^2dxdt+ s^3 \int_0^T \int_0^1(2a \varphi_{xx} +
a'\varphi_x)a (\varphi_x)^2 w^2dxdt
\\&- 2s^2 \int_0^T \int_0^1a \varphi_x \varphi_{tx}w^2dxdt  +s
\int_0^T \int_0^1(2 a^2\varphi_{xx} + aa'\varphi_x)(w_x)^2 dxdt
\end{aligned}\right\}\;\text{\{D.T.\}}
\end{equation}
\begin{equation}\nonumber
 \text{\{B.T.\}}\;\left\{
\begin{aligned}
& + \int_0^T[aw_xw_t]_{x=0}^{x=1} dt- \frac{s}{2}
\int_0^1[w^2\varphi_t]_{t=0}^{t=T}dx+ \frac{s^2}{2}\int_0^1
[a(\varphi_x)^2 w^2]_{t=0}^{t=T}dt\\& + \int_0^T[-s\varphi_x
(aw_x)^2 +s^2a\varphi_t \varphi_x w^2 - s^3 a^2(\varphi_x)^3w^2
]_{x=0}^{x=1}dt
\\&+ \int_0^T[-sa(a\varphi_x)_xw
w_x]_{x=0}^{x=1}dt-\frac{1}{2} \int_0^1 \Big[a(w_x)^2\Big]_0^Tdx.
\end{aligned}\right.
\end{equation}
\end{Lemma}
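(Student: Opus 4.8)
The plan is to expand the inner product $\langle L_s^+w, L_s^-w\rangle$ as a sum of nine terms, obtained by pairing each of the three summands of $L_s^+w = (aw_x)_x - s\varphi_t w + s^2 a(\varphi_x)^2 w$ with each of the three summands of $L_s^-w = w_t - 2sa\varphi_x w_x - s(a\varphi_x)_x w$, and then to integrate each resulting term over $Q_T=(0,T)\times(0,1)$, integrating by parts in $t$ or in $x$ as appropriate so as to collect boundary contributions separately from the genuinely distributed (volume) contributions. Throughout I will use the boundary conditions in \eqref{1'}, namely $w(t,0)=w(t,1)=0$ for $t\in(0,T)$ and $w(0^+,x)=w(T^-,x)=0$ for $x\in(0,1)$, which kill many of the boundary pieces; the regularity $w\in\mathcal V$ from \eqref{v} guarantees that all the integrations by parts are legitimate (the only delicate point being the behaviour at $x_0$, where $w$ shares the vanishing properties of elements of $H^2_a(0,1)$ described in Proposition \ref{domain}, so no interior boundary term at $x_0$ appears).

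First I would handle the three terms involving $w_t$: the term $\int_{Q_T}(aw_x)_x w_t\,dxdt$ is integrated by parts in $x$ to give $[aw_xw_t]_{x=0}^{x=1} - \int_{Q_T} a w_x w_{tx}\,dxdt$, and the last integral equals $-\frac12\int_{Q_T} a\,\partial_t(w_x)^2\,dxdt = -\frac12\int_0^1[a(w_x)^2]_{t=0}^{t=T}dx$ (no $a_t$ since $a=a(x)$); the term $-s\int_{Q_T}\varphi_t w w_t\,dxdt = -\frac{s}{2}\int_{Q_T}\varphi_t\,\partial_t(w^2)\,dxdt$ becomes $-\frac{s}{2}\int_0^1[\varphi_t w^2]_{t=0}^{t=T}dx + \frac{s}{2}\int_{Q_T}\varphi_{tt}w^2\,dxdt$; and $s^2\int_{Q_T}a(\varphi_x)^2 w w_t\,dxdt = \frac{s^2}{2}\int_{Q_T}a(\varphi_x)^2\partial_t(w^2)\,dxdt$ becomes $\frac{s^2}{2}\int_0^1[a(\varphi_x)^2 w^2]_{t=0}^{t=T}dx - s^2\int_{Q_T}a\varphi_x\varphi_{tx}w^2\,dxdt$. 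Next the three terms with $-2sa\varphi_x w_x$: pairing with $(aw_x)_x$ gives, after integrating by parts in $x$, the boundary term $-s\int_0^T[\varphi_x(aw_x)^2]_{x=0}^{x=1}dt$ plus a distributed term $s\int_{Q_T}(a\varphi_x)_x(aw_x)w_x\,dxdt$ which, written out, yields $s\int_{Q_T}(2a^2\varphi_{xx}+aa'\varphi_x)(w_x)^2\,dxdt$ together with an extra piece $s\int_{Q_T}a'\varphi_x(aw_x)\cdots$ that must be reorganized carefully; pairing $-2sa\varphi_x w_x$ with $-s\varphi_t w$ gives $2s^2\int_{Q_T}a\varphi_x\varphi_t w w_x\,dxdt = s^2\int_{Q_T}a\varphi_x\varphi_t\,\partial_x(w^2)\,dxdt$, which after integrating by parts in $x$ contributes the boundary term $s^2\int_0^T[a\varphi_t\varphi_x w^2]_{x=0}^{x=1}dt$ and a distributed remainder; pairing with $s^2a(\varphi_x)^2 w$ gives $-2s^3\int_{Q_T}a^2(\varphi_x)^3 w w_x\,dxdt = -s^3\int_{Q_T}a^2(\varphi_x)^3\partial_x(w^2)\,dxdt$, producing the boundary term $-s^3\int_0^T[a^2(\varphi_x)^3 w^2]_{x=0}^{x=1}dt$ and a distributed remainder. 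Finally the three terms with $-s(a\varphi_x)_x w$: pairing with $(aw_x)_x$ gives, by parts in $x$, the boundary term $-s\int_0^T[a(a\varphi_x)_x w w_x]_{x=0}^{x=1}dt$ plus distributed pieces; the remaining two are already distributed and contribute $s^2\int_{Q_T}\varphi_t(a\varphi_x)_x w^2$ and $-s^3\int_{Q_T}a(\varphi_x)^2(a\varphi_x)_x w^2$.

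The main obstacle is the bookkeeping: after the integrations by parts one is left with a large collection of volume integrals carrying weights like $\varphi_{tt}$, $a\varphi_x\varphi_{tx}$, $(a\varphi_x)_x\varphi_t$, $a^2(\varphi_x)^2\varphi_{xx}$, $aa'(\varphi_x)^3$, $a^2\varphi_{xx}(w_x)^2$, $aa'\varphi_x(w_x)^2$, etc., and these must be combined using the product-rule identities $(a\varphi_x)_x = a\varphi_{xx}+a'\varphi_x$ and the consequences of $\varphi=\Theta\psi$ (so $\varphi_t=\Theta'\psi$, $\varphi_{tt}=\Theta''\psi$, $\varphi_x=\Theta\psi'$, $\varphi_{tx}=\Theta'\psi'$, $\varphi_{xx}=\Theta\psi''$), cancelling and regrouping until exactly the four displayed $\{D.T.\}$ terms and the six displayed $\{B.T.\}$ groups remain. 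I would organize this by first writing every distributed term with its coefficient expanded in the $(2a\varphi_{xx}+a'\varphi_x)$ and $(2a^2\varphi_{xx}+aa'\varphi_x)$ combinations, then checking that all leftover pieces cancel in pairs; the $s^2$-terms in particular combine $-2s^2\int a\varphi_x\varphi_{tx}w^2$ from one computation with the remainders of the $w_x$–$\varphi_t$ pairing, and one verifies nothing spurious survives. The boundary terms are a matter of transcription once one notes that $[aw_x w_t]$ and $[a(w_x)^2]_{t=0}^{t=T}$ cannot be further simplified (since $w_x$ need not vanish at $x=0,1$), while $[\varphi_t w^2]$, $[a(\varphi_x)^2 w^2]$ survive only as $t$-boundary terms (vanishing, in fact, because $w(0^+,x)=w(T^-,x)=0$, but they are kept in the stated identity). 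No estimate is needed here — Lemma \ref{lemma1} is a pure identity — so the entire difficulty is careful, exhaustive algebra; the subsequent sign analysis using Hypothesis \ref{Ass02}, the choice of $c_1,c_2$, and Proposition \ref{HP} is deferred to the steps that follow this lemma.
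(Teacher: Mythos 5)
Your plan follows essentially the same route as the paper: group the nine pairings by the three summands of $L_s^-w$, integrate by parts in $t$ or $x$, and regroup. The $w_t$ block and the $-2sa\varphi_x w_x$ block are handled exactly as in the paper, and your identification of which boundary terms survive (and why the $t$-boundary and $x$-boundary pieces cannot be simplified further at this stage) is correct.

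Two points in the bookkeeping need attention, one of which is a genuine gap. First, a misattribution you partly flag yourself: the pairing of $(aw_x)_x$ with $-2sa\varphi_x w_x$ produces only $s\int_{Q_T}\varphi_{xx}(aw_x)^2\,dxdt=s\int_{Q_T}a^2\varphi_{xx}(w_x)^2\,dxdt$ as its distributed part (there is no ``extra piece'' $s\int a'\varphi_x(aw_x)\cdots$); the missing half of the coefficient, namely $s\int_{Q_T}a(a\varphi_x)_x(w_x)^2\,dxdt=s\int_{Q_T}(a^2\varphi_{xx}+aa'\varphi_x)(w_x)^2\,dxdt$, comes from the pairing of $(aw_x)_x$ with $-s(a\varphi_x)_x w$, and only the sum of the two gives the stated term $s\int(2a^2\varphi_{xx}+aa'\varphi_x)(w_x)^2$. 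Second, and more importantly: that same pairing $\langle (aw_x)_x,\,-s(a\varphi_x)_x w\rangle$ also produces the distributed cross term $s\int_{Q_T}a(a\varphi_x)_{xx}\,w\,w_x\,dxdt$, which is \emph{not} among the four displayed distributed terms and does \emph{not} cancel against anything else; the identity closes only because the specific choice of $\psi$ in \eqref{c_1} gives $a\varphi_x=c_1\Theta(x-x_0)$, hence $(a\varphi_x)_x=c_1\Theta$ and $(a\varphi_x)_{xx}=0$. The paper makes this observation explicitly, and without it your ``exhaustive algebra'' would leave a spurious term. Everything else (the $s^2$-terms recombining to $-2s^2\int a\varphi_x\varphi_{tx}w^2$, the $s^3$-terms recombining to $s^3\int(2a\varphi_{xx}+a'\varphi_x)a(\varphi_x)^2w^2$, and the justification of the integrations by parts via $w\in\mathcal V$ and Proposition \ref{domain}) is in order.
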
\begin{proof}
First, let us note that all integrals which appear in $\langle
L^+_sw,L^-_sw\rangle$ are well defined both in the weakly and the
strongly degenerate case by Theorem \ref{rem}, as simple calculations
show, recalling that $w=e^{s\vp}v$. Moreover, we remark that all the
following integrations by parts are justified by Proposition
\ref{domain} and the fact that $w\in {\mathcal V}$. Hence
\begin{equation}\label{!}
\begin{aligned}
&\int_0^T \int_0^1L^+_sw w_t  dxdt= \int_0^T \int_0^1 \{ (aw_x)_x
 - s \varphi_t w + s^2a(\varphi_x)^2 w\}w_t dxdt\\
 &=\int_0^T[aw_xw_t]_{x=0}^{x=1}dt -
 \int_0^T\frac{1}{2}\frac{d}{dt}\left(\int_0^1a(w_x)^2dx\right)dt\\
 & - \frac{s}{2}\int_0^Tdt \int_0^1\varphi_t(w^2)_tdx +
 \frac{s^2}{2}\int_0^Tdt \int_0^1a(\varphi_x)^2(w^2)_tdx
\\
&= \int_0^T[aw_xw_t]_{x=0}^{x=1}dt - \frac{s}{2}
\int_0^1[w^2\varphi_t]_{t=0}^{t=T}dx+ \frac{s^2}{2}\int_0^1
[a(\varphi_x)^2 w^2]_{t=0}^{t=T}dt\\&
 -\int_0^T\frac{1}{2}\frac{d}{dt}\left(\int_0^1a(w_x)^2dx\right)dt+ \frac{s}{2}\int_0^T
\int_0^1\varphi_{tt}w^2dxdt\\& - s^2\int_0^T \int_0^1a \varphi_x
\varphi_{xt}w^2 dxdt.
\end{aligned}
\end{equation}

In addition, we have
\begin{equation}\label{!1}
\begin{aligned}
&\int_0^T \int_0^1L^+_sw (-2sa\varphi_xw_x)  dxdt= -2s\int_0^T
\int_0^1\varphi_x\left[\frac{(aw_x)^2}{2} \right]_x  dxdt \\&+
2s^2\int_0^T \int_0^1a \varphi_t \varphi_x \left(
\frac{w^2}{2}\right)_x
dxdt-2s^3 \int_0^T \int_0^1a^2(\varphi_x)^3 w w_x  dxdt\\
 &=\int_0^T[-s\varphi_x(aw_x)^2 + s^2a\varphi_t\varphi_xw^2 - s^3a^2(\varphi_x)^3w^2]_{x=0}^{x=1}dt
 \\&+
 s\int_0^T\int_0^1 \varphi_{xx} (aw_x)^2dxdt  -s^2 \int_0^T\int_0^1 (a\varphi_t \varphi_x)_x w^2  dxdt
 \\&+ s^3\int_0^T\int_0^1\{(a(\varphi_x)^2)_x a \varphi_x  + a (\varphi_x)^2 (a
 \varphi_x)_x\}w^2 dxdt.
\end{aligned}
\end{equation}
At this point, note that $(a \varphi_x)_x=c_1\Theta$, so that $(a
\varphi_x)_{xx}=0$.

Moreover
\begin{equation}\label{!2}
\begin{aligned}
&\int_0^T \int_0^1L^+_sw (-s(a\varphi_x)_xw) dxdt =\int_0^T[-saw_x
w(a \varphi_x)_x ]_{x=0}^{x=1}dt\\
&+s\int_0^T\int_0^1 a w_x (a \varphi_x)_x w_x dxdt \\
 & + s^2\int_0^T\int_0^1(a\varphi_x)_x \varphi_t w^2  dxdt- s^3
 \int_0^T \int_0^1 a (\varphi_x)^2 (a \varphi_x)_x w^2 dxdt.
\end{aligned}
\end{equation}
    Adding \eqref{!}-\eqref{!2}, \eqref{D&BT} follows immediately.
\end{proof}

Now, the crucial step is to prove the following estimate:
\begin{Lemma}\label{lemma2}
Assume Hypothesis $\ref{Ass02}$. Then there exists a positive
constant $s_0$ such that for all $s \ge s_{0}$ the distributed terms
of \eqref{D&BT} satisfy the estimate
\[
\begin{aligned}
&\frac{s}{2}\int_0^T \int_0^1 \varphi_{tt} w^2 dxdt + s^3 \int_0^T
\int_0^1(2a \varphi_{xx}+ a' \varphi_x)a (\varphi^2_x) w^2
dxdt\\
&- 2s^2 \int_0^T \int_0^1a \varphi_x \varphi_{tx}w^2 dxdt + s
\int_0^T \int_0^1(2 a^2\varphi_{xx} + aa' \varphi_x)(w_x)^2 dxdt \\
&\ge \frac{C}{2}s\int_0^T\int_0^1 \Theta a(w_x)^2 dxdt +
\frac{C^3}{2}s^3 \int_0^T\int_0^1\Theta^3 \frac{(x-x_0)^2}{a} w^2
dxdt,
\end{aligned}
\]
for a positive constant $C$.
\end{Lemma}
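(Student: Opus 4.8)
The plan is to estimate the four distributed terms from \eqref{D&BT} one by one, using the explicit form of $\varphi=\Theta\psi$ and the key structural identity $(a\varphi_x)_x = c_1\Theta$. First I would record the elementary derivatives: $\varphi_x = c_1\Theta\,\dfrac{x-x_0}{a}$, so that $a\varphi_x = c_1\Theta(x-x_0)$ and $(a\varphi_x)_x = c_1\Theta$, whence $2a\varphi_{xx}+a'\varphi_x = (a\varphi_x)_x + a\varphi_{xx} - a'\varphi_x$; more usefully, $a\varphi_{xx} = c_1\Theta\big(1 - \tfrac{(x-x_0)a'}{a}\big)$, and therefore
\[
2a^2\varphi_{xx} + aa'\varphi_x = 2a^2\varphi_{xx} + c_1\Theta(x-x_0)a' = c_1\Theta a\Big(2 - \frac{(x-x_0)a'}{a}\Big).
\]
By Hypothesis \ref{Ass02} (more precisely by the assumption $(x-x_0)a'\le Ka$ with $K<2$) this bracket is $\ge (2-K)\,c_1\Theta a > 0$, which already produces the good gradient term $s\,c_1(2-K)\int\!\!\int \Theta a (w_x)^2\,dxdt$, modulo the constant $C$ (we may take $c_1$ large at the end). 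This is the cleanest of the four terms.

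Next I would treat the cubic term $s^3\int\!\!\int (2a\varphi_{xx}+a'\varphi_x)\,a\varphi_x^2\, w^2$. Writing $2a\varphi_{xx}+a'\varphi_x = (a\varphi_x)_x + (a\varphi_{xx} - a'\varphi_x)$ is awkward; instead I would use $2a\varphi_{xx} + a'\varphi_x = \tfrac{1}{a}\big(2a^2\varphi_{xx}+aa'\varphi_x\big) - a'\varphi_x \ge \tfrac{2-K}{a}\cdot c_1\Theta a \cdot\tfrac1a\cdot(\cdots)$ — i.e. the same bracket divided by $a$ — so that $2a\varphi_{xx}+a'\varphi_x \ge (2-K)c_1\Theta + (\text{correction})$; combined with $a\varphi_x^2 = c_1^2\Theta^2\dfrac{(x-x_0)^2}{a}$, the dominant contribution is $c_1^3(2-K)\,s^3\int\!\!\int \Theta^3\dfrac{(x-x_0)^2}{a}\,w^2$, which is exactly the second good term. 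The subtlety is that $2a\varphi_{xx}+a'\varphi_x$ need not be pointwise $\ge$ a positive multiple of $\Theta$ — near $x_0$ the term $a\varphi_{xx}$ could behave badly — so here I expect to need Hypothesis \ref{Ass02} in its full strength: the monotonicity of $a/|x-x_0|^\vartheta$ gives, on differentiating, a one-sided bound on $a'$ that controls $a\varphi_{xx}$ from below by $c_1\Theta\vartheta/|x-x_0|\cdot(\cdots)$ or shows the bad part has the right sign. This is the main obstacle.

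The two remaining terms are lower-order in $s$ and are absorbed. The first, $\tfrac{s}{2}\int\!\!\int \varphi_{tt}\,w^2$, and the mixed term $-2s^2\int\!\!\int a\varphi_x\varphi_{tx}\,w^2 = -2s^2 c_1\int\!\!\int \Theta'(x-x_0)\varphi_x\, w^2$, both involve $\Theta'$ or $\Theta''$. The standard observation is $|\Theta'|\le C\Theta^{5/4}$ and $|\Theta''|\le C\Theta^{3/2}$ (since $\Theta=[t(T-t)]^{-4}$), so $|\varphi_{tt}| = |\Theta''\psi|\le C\Theta^{3/2}$ and $|a\varphi_x\varphi_{tx}| = c_1^2|\Theta\Theta'|\dfrac{(x-x_0)^2}{a}\le C\Theta^{9/4}\dfrac{(x-x_0)^2}{a}$. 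Hence for $s$ large these are dominated by $\tfrac14 C^3 s^3\int\!\!\int \Theta^3\dfrac{(x-x_0)^2}{a}w^2$, using in the first term the Hardy--Poincaré inequality of Proposition \ref{HP} with $p=a$ (legitimate by Hypothesis \ref{Ass02}, which makes $a/|x-x_0|^q$ monotone for suitable $q\in(1,2)$) to convert $\int \varphi_{tt} w^2$ — after bounding $|\psi|\le c_1 c_2$ and writing $w^2 = (x-x_0)^2\cdot\tfrac{w^2}{(x-x_0)^2}$ — into something controlled by $\int a(w_x)^2$, i.e. by the first good term at a lower power of $s$, or directly into the second good term. Collecting everything, choosing first $c_1$ large enough that the positive brackets dominate and then $s_0$ large enough to swallow the lower-order pieces, yields the claimed inequality with a single constant $C$.
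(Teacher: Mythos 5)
Your treatment of the two leading terms is exactly the paper's: the identity $2a^2\varphi_{xx}+aa'\varphi_x=c_1\Theta\,(2a-(x-x_0)a')\ge (2-K)c_1\Theta a$ gives both good terms, and the $s^2$ mixed term is absorbed via $|\Theta\dot\Theta|\le c\,\Theta^{9/4}\le c\,\Theta^3$. Incidentally, the ``main obstacle'' you flag in the cubic term is illusory: dividing the same bracket by $a$ gives $2a\varphi_{xx}+a'\varphi_x\ge (2-K)c_1\Theta$ pointwise on $[0,1]\setminus\{x_0\}$ directly from $(x-x_0)a'\le Ka$, with no further input from Hypothesis \ref{Ass02} needed there.

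The genuine gap is in the term $\frac{s}{2}\iint \varphi_{tt}w^2$. First, you cannot invoke Proposition \ref{HP} with $p=a$: the hypothesis requires monotonicity of $p/|x-x_0|^{q}$ for some $q\in(1,2)$, while Hypothesis \ref{Ass02} only provides it for some $\vartheta\in(0,K]$, and $\vartheta$ may well be $\le 1$ (it always is in the weakly degenerate case); monotonicity of $a/|x-x_0|^\vartheta$ for a small $\vartheta$ does \emph{not} propagate to larger exponents, since you would be multiplying by the decreasing factor $|x-x_0|^{\vartheta-q}$. The paper's fix is to take $p(x)=(a(x)|x-x_0|^4)^{1/3}$ and $q=(4+\vartheta)/3\in(1,2)$, for which $p/|x-x_0|^q=(a/|x-x_0|^\vartheta)^{1/3}$ is monotone and $p\le C_1 a$. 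Second, even granting a Hardy--Poincar\'e bound, converting $\int\Theta^{3/2}w^2\,dx$ into $\Theta^{3/2}\int a(w_x)^2\,dx$ is not enough: $s\,\Theta^{3/2}a(w_x)^2$ cannot be absorbed by $\frac{C}{2}s\,\Theta\,a(w_x)^2$ because $\Theta^{1/2}$ blows up as $t\to 0^+,T^-$, nor by the $s^3\Theta^3$ term, which carries the weight $(x-x_0)^2 w^2/a$ rather than $a(w_x)^2$. The missing step is the Young interpolation $\Theta^{3/2}w^2=\bigl(\Theta\,\tfrac{a^{1/3}}{|x-x_0|^{2/3}}w^2\bigr)^{3/4}\bigl(\Theta^3\tfrac{(x-x_0)^2}{a}w^2\bigr)^{1/4}$ as in \eqref{sopra}: the first factor is $\Theta\,p\,w^2/(x-x_0)^2$ with the modified $p$, handled by Proposition \ref{HP} as in \eqref{hpappl}, and the second is the good cubic term appearing with only one power of $s$, hence absorbed for $s$ large. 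Without this splitting and the corrected weight, the step as you describe it fails.
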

\begin{proof}
Using the definition of $\varphi$, the distributed terms of
$\int_0^T \int_0^1L^+_s w L^-_s w dxdt$ take the form
\begin{equation}\label{02}
\begin{aligned}
&\frac{s}{2} \int_0^T \int_0^1 \ddot{\Theta} \psi w^2 dxdt +
s^3\int_0^T \int_0^1\Theta^3a(2a\psi''+ a' \psi')(\psi')^2w^2
dxdt\\
& -2s^2\int_0^T \int_0^1\Theta\dot{\Theta}a(\psi')^2w^2 dxd+ s
\int_0^T \int_0^1\Theta a(2a\psi''+ a' \psi')(w_x)^2
dxdt.
\end{aligned}
\end{equation}
Because of the choice of $\psi(x)$, one has
\[
2a(x) \psi''(x) + a'(x) \psi' (x)= c_1\frac{2a(x)-a'(x)
(x-x_0)}{a(x)}.
\]
Thus \eqref{02} becomes
\[
\begin{aligned}
\frac{s}{2}\int_0^T \int_0^1\ddot{\Theta}\psi w^2  dxdt&-2s^2
\int_0^T \int_0^1\Theta \dot{\Theta}a(\psi')^2w^2 dxdt\\
&+ sc_1 \int_0^T \int_0^1\Theta  (2a(x) -a'(x) (x-x_0))(w_x)^2
dxdt\\& + s^3c_1 \int_0^T \int_0^1\Theta^3(\psi')^2(2a(x) -a'(x)
(x-x_0))w^2 dxdt.
\end{aligned}
\]
By assumption, one can estimate the previous terms in the following
way:
\[
\begin{aligned}
&\frac{s}{2}\int_0^T \int_0^1 \ddot{\Theta}\psi w^2  dxdt-2s^2
\int_0^T \int_0^1\Theta \dot{\Theta}a(\psi')^2w^2 dxdt\\
& + sc_1 \int_0^T \int_0^1\Theta (2a(x) -a'(x)(x-x_0))(w_x)^2
dxdt\\& + s^3c_1\int_0^T \int_0^1\Theta^3 (\psi')^2(2a(x)
-a'(x)(x-x_0))w^2 dxdt
\\&\ge -2s^2 \int_0^T \int_0^1\Theta
\dot{\Theta}a(\psi')^2w^2  dxdt +\frac{s}{2}\int_0^T \int_0^1
\ddot{\Theta}\psi w^2dxdt \\&+ sC \int_0^T \int_0^1\Theta a(w_x)^2
dxdt+ s^3C^3\int_0^T \int_0^1\Theta^3 \frac{(x-x_0)^2}{a}w^2 dxdt,
\end{aligned}
\]
where $C>0$ is some universal positive constant.
 Observing that
 $|\Theta \dot{\Theta}| \le c \Theta^{9/4} \le c \Theta ^3$ and $|\ddot{\Theta}| \le c\Theta
^{3/2}$, for a
 positive constant $c$,
we conclude that, for $s$ large enough,
\[
\begin{aligned}
&\left|-2s^2\int_0^T \int_0^1\Theta  \dot{\Theta} a(\psi')^2w^2 dxdt
\right| \le 2c s^2
\int_0^T \int_0^1\Theta^3a(\psi')^2w^2 dxdt\\
&= 2c s^2\int_0^T \int_0^1\Theta^3 c_1^2\frac{(x-x_0)^2}{a}w^2 dxdt
\le \frac{C^3}{4}s^3\int_0^T \int_0^1\Theta^3 \frac{(x-
x_0)^2}{a}w^2 dxdt.
\end{aligned}
\]
Moreover,
\begin{equation}\label{zio}
\begin{aligned}
\left|  \frac{s}{2} \int_0^T \int_0^1 \ddot{\Theta}\psi w^2dxdt
\right| &\le \frac{s}{2}c_1c \left|\int_0^T \int_0^1\Theta^{3/2}
bw^2 dxdt\right| \\&+ s\frac{c_1c_2}{2}c\left|\int_0^T
\int_0^1\Theta ^{3/2}w^2 dxdt\right|,
\end{aligned}
\end{equation}
where $b(x)= \displaystyle\int_{x_0}^x\frac{y-x_0}{a(y)}dy\geq0$.
Now, since the function $x \mapsto\displaystyle
\frac{|x-x_0|^K}{a(x)}$ is nonincreasing on $[0, x_0)$ and
nondecreasing on $(x_0,1]$ (see Lemma \ref{rem}), one has $ b(x)\le
\displaystyle \frac{(x-x_0)^2}{(2-K)a(x)}$, see \eqref{defpsi}.
Hence
\[
\frac{s}{2}c_1c\int_0^T \int_0^1\Theta^{3/2} bw^2 dxdt
\le\frac{C^3}{8}s^3\int_0^T \int_0^1\Theta^{3} \frac{(x-
x_0)^2}{a}w^2 dxdt,
\]
for $s$ large enough.

It remains to bound the term $\left|\int_0^T \int_0^1\Theta
^{3/2}w^2 dxdt\right|$. Using the Young inequality, we find
\begin{equation}\label{sopra}
\begin{aligned} &s\frac{c_1c_2}{2}c\left|\int_0^1
\Theta^{3/2}w^2 dx \right|\\&=
s\frac{c_1c_2}{2}c\left|\int_0^1\left(\Theta\frac{a^{1/3}}{|x-x_0|^{2/3}}w^2\right)^{3/4}\left(\Theta^3
\frac{|x-x_0|^2}{a} w^2\right)^{1/4}\right| \\
&\le s\frac{3 c_1c_2}{4}c \int_0^1
\Theta\frac{a^{1/3}}{|x-x_0|^{2/3}}w^2dx +
 s\frac{c_1c_2}{4}c \int_0^1\Theta^3
\frac{|x-x_0|^2}{a} w^2 dx.
\end{aligned}
\end{equation}

Now, consider the function $p(x) = (a(x)|x-x_0|^4)^{1/3}$. It is
clear that, setting  $C_1:=
\max\left\{\displaystyle\left(\frac{x_0^2}{a(0)}\right)^{2/3},\displaystyle\left(\frac{(1-x_0)^2}{a(1)}\right)^{2/3}\right\}$,
by Lemma \ref{rem} we have $\displaystyle p(x)=  a(x)
\left(\frac{(x-x_0)^2}{a(x)}\right)^{2/3}\le C_1 a(x)$ and
$\displaystyle \frac{a^{1/3}}{|x-x_0|^{2/3}}=
\frac{p(x)}{(x-x_0)^2}$. Moreover, using Hypothesis \ref{Ass02}, one
has that the function $\displaystyle\frac{p(x)}{|x-x_0|^q}$, where
$\displaystyle q: =\frac{4+\vartheta}{3}\in(1,2)$, is nonincreasing
on the left of $x=x_0$ and nondecreasing on the right of $x=x_0$.
The Hardy-Poincar\'{e} inequality (see Proposition \ref{HP}) implies
\begin{equation}\label{hpappl}
\begin{aligned}
\int_0^1 \Theta\frac{a^{1/3}}{|x-x_0|^{2/3}}w^2dx &= \int_0^1 \Theta
\frac{p}{(x-x_0)^2} w^2 dx \le C_{HP}\int_0^1 \Theta p (w_x)^2 dx
\\&\le C_{HP}C_1 \int_0^1 \Theta a (w_x)^2 dx,
\end{aligned}
\end{equation}
where $C_{HP}$ and $C_1$ are the Hardy-Poincar\'{e} constant and the
constant introduced before, respectively. Thus, for $s$ large
enough, by \eqref{sopra} and \eqref{hpappl}, we have
\[
\begin{aligned}
s\frac{c_1c_2}{2}c \int_0^1 \Theta^{3/2} w^2 dx \le
\frac{C}{2}s\int_0^1 \Theta a (w_x)^2 dx + \frac{C^3}{8}s^3
\int_0^1\Theta^3 \frac{(x-x_0)^2}{a}w^2 dxdt,
\end{aligned}
\]
for a positive constant $C$. Using the estimates above, from
\eqref{zio} we finally obtain
 \[
 \begin{aligned}
\left|  \frac{s}{2} \int_0^T \int_0^1 \ddot{\Theta}\psi w^2dxdt
\right| &\le \frac{C}{2}s\int_0^T\int_0^1 \Theta a (w_x)^2 dxdt
\\&+ \frac{C^3}{4}s^3\int_0^T \int_0^1\Theta^3
\frac{(x-x_0)^2}{a}w^2 dxdt.
\end{aligned}
\]
Summing up, we obtain
\[
\begin{aligned}
&\frac{s}{2} \int_0^T \int_0^1 \varphi_{tt} w^2  dxdt+ s \int_0^T
\int_0^1a (a \varphi_x)_{xx} w w_x dxdt \\&- 2s^2 \int_0^T \int_0^1a
\varphi_x \varphi_{tx}w^2  dxdt+ s \int_0^T \int_0^1(2
a^2\varphi_{xx} + aa'\varphi_x)(w_x)^2  dxdt\\&+ s^3 \int_0^T
\int_0^1(2a \varphi_{xx}+ a'\varphi_x)a (\varphi_x)^2 w^2 dxdt\\&\ge
\frac{C}{2}s\int_0^T\int_0^1 \Theta a(w_x)^2 dxdt + \frac{C^3}{2}s^3
\int_0^T\int_0^1\Theta^3 \frac{(x-x_0)^2}{a} w^2 dxdt.
\end{aligned}
\]
\end{proof}
For the boundary terms in \eqref{D&BT}, it holds:
\begin{Lemma}\label{lemma4}
The boundary terms in \eqref{D&BT} reduce to
\[-s \int_0^{T} \left[ \Theta
     (aw_{x})^2 \psi' \right]_{x=0}^{x=1}dt.
\]
\end{Lemma}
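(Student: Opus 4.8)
The plan is to take the block of boundary terms produced in Lemma \ref{lemma1} and check, one at a time, that every one of them vanishes except $\int_0^T\bigl[-s\varphi_x(aw_x)^2\bigr]_{x=0}^{x=1}\,dt$, using only the homogeneous conditions that $w$ inherits in \eqref{1'}, namely $w(t,0)=w(t,1)=0$ for $t\in(0,T)$ and $w(0^+,x)=w(T^-,x)=0$ for $x\in(0,1)$. Note first that the only endpoints occurring in these terms are $x=0,1$ and $t=0,T$: the potential contributions at the interior degeneracy point $x_0$ have already been absorbed in the integrations by parts of Lemma \ref{lemma1}, since by Propositions \ref{characterization} and \ref{domain} (and $w\in\mathcal V$) the quantities $aw$ and $aw_x$ vanish at $x_0$. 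So here I only need to deal with the genuine boundary $\{0,1\}$ and the time endpoints.

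First I would dispose of the three ``temporal'' boundary terms, i.e. $-\tfrac{s}{2}\int_0^1[w^2\varphi_t]_{t=0}^{t=T}\,dx$, $\tfrac{s^2}{2}\int_0^1[a(\varphi_x)^2w^2]_{t=0}^{t=T}\,dx$ (the measure there is of course $dx$), and $-\tfrac12\int_0^1[a(w_x)^2]_0^T\,dx$. Since $w\in\mathcal V\hookrightarrow C([0,T];H^1_a(0,1))$ and, by \eqref{1'}, $w(0^+,\cdot)=w(T^-,\cdot)=0$ in $H^1_a(0,1)$, both $w$ and $w_x$ vanish (in $L^2$, resp.\ weighted $L^2$, sense) at $t=0$ and $t=T$. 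Hence each of these three integrands is zero and the three terms drop out.

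Next I would treat the ``spatial'' boundary terms $\int_0^T[aw_xw_t]_{x=0}^{x=1}\,dt$, $\int_0^T\bigl[s^2a\varphi_t\varphi_xw^2-s^3a^2(\varphi_x)^3w^2\bigr]_{x=0}^{x=1}\,dt$ and $\int_0^T[-sa(a\varphi_x)_xww_x]_{x=0}^{x=1}\,dt$. Because $w(t,0)=w(t,1)=0$ for every $t$, the maps $t\mapsto w(t,0)$ and $t\mapsto w(t,1)$ are identically zero, so $w_t$ vanishes at $x=0,1$; likewise $w^2$ and $w\,w_x$ vanish there. Therefore all of these terms are zero, and the only survivor is $\int_0^T\bigl[-s\varphi_x(aw_x)^2\bigr]_{x=0}^{x=1}\,dt$. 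Recalling $\varphi(t,x)=\Theta(t)\psi(x)$, hence $\varphi_x=\Theta\psi'$, this is exactly $-s\int_0^T\bigl[\Theta(aw_x)^2\psi'\bigr]_{x=0}^{x=1}\,dt$, the claimed expression.

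The only point needing genuine care — and the part I expect to be the main obstacle — is justifying that the traces above are meaningful and that the vanishing of $w$ on the parabolic boundary really transfers to $w_x$ and $w_t$. At $x=0,1$ the coefficient $a$ is strictly positive, so near these points $w(t,\cdot)\in H^2$ and, since $aw_x\in H^1(0,1)\subset C([0,1])$, the values $(aw_x)(t,0),(aw_x)(t,1)$ are well defined for a.e.\ $t$; and $w_t$ has a well-defined value at $x=0,1$ that is zero because $w(\cdot,0)$ and $w(\cdot,1)$ are the zero function. For the time-endpoint terms one uses $w(0^+,\cdot)=w(T^-,\cdot)=0$ and differentiates in $x$. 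If one wishes to sidestep the trace subtleties, one can establish the identity first for $v$ smooth (where every manipulation is classical) and then pass to the limit via density of smooth functions in $\mathcal V$ together with the a priori estimate \eqref{stima1}; beyond this, the proof is just bookkeeping.
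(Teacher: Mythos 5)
Your proposal is correct and follows essentially the same route as the paper: the time-endpoint terms vanish because $w=e^{s\varphi}v$ and its spatial derivative vanish at $t=0,T$, the spatial boundary terms containing $w$, $w^2$, $w\,w_x$ or $w_t$ vanish because $w(t,0)=w(t,1)=0$ (with the relevant traces well defined since $a>0$ near $x=0,1$ and $aw_x\in H^1(0,1)\subset C([0,1])$), and only the term $-s\int_0^T[\varphi_x(aw_x)^2]_{x=0}^{x=1}\,dt=-s\int_0^T[\Theta(aw_x)^2\psi']_{x=0}^{x=1}\,dt$ survives. This matches the paper's argument in both decomposition and justification.
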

\begin{proof}
Using the definition of $\varphi$, we have that the boundary terms
become
\begin{equation}\label{bt}
\begin{aligned}
\text{ (B.T.) } =  &\int_0^{T} [a w_{x}w_{t} -s a\Theta(a\psi')'ww_x
+ s^{2} \dot{\Theta}\Theta a \psi \psi' w^{2}\\&\quad\quad- s^{3}
a^{2}\Theta^{3}(\psi')^3w^2- s\Theta (aw_{x})^2 \psi'
]_{x=0}^{x=1}dt\\&+\int_0^1\left[-\frac{s}{2}w^2\psi
\dot{\Theta}+\frac{s^2}{2}aw^2(\psi')^2\Theta^2-\frac{1}{2}
a(w_x)^2\right]_{t=0}^{t=T} dx.
\end{aligned}
\end{equation}
Since $w \in \mathcal{V}$, $ w\in C\big([0, T];H^1_a(0,1) \big)$.
Thus $w(0, x)$, $w(T,x)$, $w_x(0,x)$, $w_x(T,x)$ and $\int_0^1
\big[a(w_x)^2\big]_0^Tdx$ are indeed well defined. Using the
boundary conditions of $w$ and the definition of $w$, we get that
\[
\int_0^1\left[-\frac{s}{2}w^2\psi
\dot{\Theta}+\frac{s^2}{2}aw^2(\psi')^2\Theta^2-\frac{1}{2}a(w_x)^2\right]_{t=0}^{t=T}
dx=0.
\]

Moreover, since $w \in \mathcal{V}$, we have that
 $w_t (t,0)$ and $w_t(t,1)$ make sense. Therefore, also $a(0)w_x(t,0)$ and $a(1)w_x(t,1)$
are well defined. In fact  $w(t,\cdot)\in H^2_a(0,1)$ and
$a(\cdot)w_x(t,\cdot)\in W^{1,2}(0,1)\subset C([0,1])$.
Thus $ \int_0^T[a w_xw_t]_{x=0}^{x=1}dt$ is well defined and
actually equals $0$, as we get using the  boundary conditions on
$w$.

Now, consider the second, the third and the fourth terms of
\eqref{bt}. By definition of $\psi$ and using the hypothesis on $a$,
the functions $(a\psi')'$, $a\psi\psi'$ and $a^2(\psi')^3$ are
bounded on $[0,1]$. Thus, by the boundary conditions on $w$, one has
\[
\begin{aligned}
s\int_0^T \left[  a\Theta(a\psi')'ww_x\right]_{x=0}^{x=1}dt& =
s^{2}\int_0^T \left[ \dot{\Theta}\Theta a \psi
     \psi' w^{2}\right]_{x=0}^{x=1}dt\\&=
s^{3}\int_0^T \left[
a^{2}\Theta^{3}(\psi')^3w^2\right]_{x=0}^{x=1}dt = 0.
\end{aligned}
\]
\end{proof}

From Lemma \ref{lemma1}, Lemma \ref{lemma2}, and Lemma \ref{lemma4},
we deduce immediately that there exist two positive constants $C$
and $s_0$, such that all solutions $w$ of \eqref{1'} satisfy, for
all $s \ge s_0$,
\begin{equation}\label{D&BT1}
\begin{aligned}
\int_0^T \int_0^1L^+_s w L^-_s w dxdt &\ge Cs\int_0^T\int_0^1 \Theta
a(w_x)^2 dxdt\\&+ Cs^3 \int_0^T\int_0^1\Theta^3 \frac{(x-x_0)^2}{a}
w^2 dxdt\\&- s\int_0^{T} \left[\Theta
a^2(w_{x})^{2}\psi'\right]_{x=0}^{x=1}dt.
\end{aligned}
\end{equation}

Thus, a straightforward consequence of \eqref{stimetta} and of
\eqref{D&BT1} is the next result.
\begin{Proposition}\label{Carleman}
Assume Hypothesis $\ref{Ass02}$ and let $T>0$. Then, there exist two
positive constants $C$ and $s_0$, such that all solutions $w$ of
\eqref{1'} in $\mathcal{V}$ satisfy, for all $s \ge s_0$, \[
\begin{aligned}
   & s\int_0^T\int_0^1 \Theta a(w_x)^2dxdt  + s^3
\int_0^T\int_0^1\Theta^3 \frac{(x-x_0)^2}{a} w^2 dxdt\\&\le
    C\left(\int_0^T\int_0^1 |h|^2 e^{2s\varphi(t,x)}dxdt+ s\int_0^{T} \left[\Theta
       a^2(w_{x})^{2}\psi'\right]_{x=0}^{x=1}dt.
    \right).
    \end{aligned}
    \]
\end{Proposition}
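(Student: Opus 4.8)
The plan is to obtain the estimate as an immediate combination of the upper bound \eqref{stimetta} for the mixed scalar product $\langle L^+_sw, L^-_sw\rangle$ and the lower bound \eqref{D&BT1} for the same quantity. First note that since $w=e^{s\varphi}v$ with $v\in\mathcal V$ and $\psi<0$, one has $w\in\mathcal V$, so $w$ is an admissible solution of \eqref{1'} and both \eqref{stimetta} and \eqref{D&BT1} apply to it.

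I would first recall \eqref{stimetta}, which gives
\[
2\langle L^+_sw, L^-_sw\rangle \le \|he^{s\varphi}\|^2_{L^2(Q_T)} = \int_0^T\int_0^1 |h|^2 e^{2s\varphi}\,dxdt .
\]
Next I would invoke \eqref{D&BT1}, already established by assembling Lemma \ref{lemma1} (the splitting of the scalar product into distributed and boundary terms), Lemma \ref{lemma2} (the lower bound for the distributed terms) and Lemma \ref{lemma4} (the reduction of the boundary terms), which yields, for all $s\ge s_0$,
\[
\langle L^+_sw, L^-_sw\rangle \ge Cs\int_0^T\int_0^1 \Theta a(w_x)^2\,dxdt + Cs^3\int_0^T\int_0^1\Theta^3 \frac{(x-x_0)^2}{a}w^2\,dxdt - s\int_0^T\bigl[\Theta a^2(w_x)^2\psi'\bigr]_{x=0}^{x=1}\,dt .
\]
Then I would double this lower bound, substitute it into the upper bound, transfer the boundary term to the right-hand side, and divide through by $2C$. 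Absorbing $1/(2C)$ and $1/C$ into a single constant, still denoted $C$, produces exactly
\[
s\int_0^T\int_0^1 \Theta a(w_x)^2\,dxdt + s^3\int_0^T\int_0^1\Theta^3 \frac{(x-x_0)^2}{a}w^2\,dxdt \le C\Bigl(\int_0^T\int_0^1 |h|^2 e^{2s\varphi}\,dxdt + s\int_0^T\bigl[\Theta a^2(w_x)^2\psi'\bigr]_{x=0}^{x=1}\,dt\Bigr)
\]
for all $s\ge s_0$, which is the claimed inequality.

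There is essentially no obstacle left at this stage: all the work has already been carried out in the preceding lemmas. The genuinely delicate point — handled inside the proof of Lemma \ref{lemma2} — is the control of the lower-order term $\tfrac{s}{2}\int\!\!\int\ddot\Theta\,\psi\,w^2$ via the weighted Hardy--Poincaré inequality of Proposition \ref{HP} applied with $p(x)=(a(x)|x-x_0|^4)^{1/3}$ (so that $q=(4+\vartheta)/3\in(1,2)$), together with the elementary bounds $|\Theta\dot\Theta|\le c\Theta^3$ and $|\ddot\Theta|\le c\Theta^{3/2}$; this is where the interior degeneracy at $x_0$ is absorbed. The only other care needed is that all integrals in $\langle L^+_sw, L^-_sw\rangle$ are finite and the integrations by parts legitimate, which follows from $w\in\mathcal V$ and Proposition \ref{domain}, as already observed in the proof of Lemma \ref{lemma1}.
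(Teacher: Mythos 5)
Your argument is exactly the paper's: Proposition \ref{Carleman} is stated there as ``a straightforward consequence of \eqref{stimetta} and of \eqref{D&BT1}'', and your combination of the upper bound $2\langle L^+_sw,L^-_sw\rangle\le\|he^{s\varphi}\|^2_{L^2(Q_T)}$ with the lower bound \eqref{D&BT1}, followed by moving the boundary term to the right and renaming constants, is precisely that deduction, correctly executed. Your closing remarks on where the real work lies (Lemma \ref{lemma2} via Proposition \ref{HP}) also match the paper's structure.
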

Recalling the definition of $w$, we have $v= e^{-s\varphi}w$ and
$v_{x}= -s\Theta \psi'e^{-s\varphi}w + e^{-s\varphi}w_{x}$. Thus, by
the Cauchy--Schwarz inequality
\[
\begin{aligned}
&\int_0^T\int_0^1 \left(s\Theta a(v_x)^2 + s^3 \Theta^3
\frac{(x-x_0)^2}{a} v^2\right)e^{2s\varphi}dxdt\\
&\leq \int_0^T\int_0^1 \left(s\Theta a (w_x)^2dxdt+s^3c_1^2\Theta^3
\frac{(x-x_0)^2}{a}w^2\right)dxdt,
\end{aligned}
\]
and by Proposition \ref{Carleman}, Theorem \ref{Cor1} follows.

\section{Application of Carleman estimates to observability inequalities}\label{sec4}

In this section we provide a possible application of the Carleman
estimates established in the previous section, considering the
control problem \eqref{linear}. In particular, we consider the
situation in which $x_0$ is inside the control interval
\begin{equation}\label{omega1}
x_0\in\omega=(\alpha,\beta) \subset (0,1).
\end{equation}

Now, we associate to the linear problem \eqref{linear} the
homogeneous adjoint problem
\begin{equation}\label{h=0}
\begin{cases}
v_t +(av_x)_x= 0, &(t,x) \in  Q_T,
\\[5pt]
v(t,0)=v(t,1) =0, & t \in (0,T),
\\[5pt]
v(T,x)= v_T(x)\in L^2(0,1),
\end{cases}
\end{equation}
where $T>0$ is given.
By the Carleman estimate in Theorem
\ref{Cor1}, we will deduce the following observability inequality
for both the weakly and the strongly degenerate cases:
\begin{Proposition}\label{obser.}
Assume Hypothesis $\ref{Ass02}$ and \eqref{omega1}.
Then there exists a positive constant $C_T$ such that every solution
$v \in  C([0, T]; L^2(0,1)) \cap L^2 (0,T; H^1_a(0,1))$ of
\eqref{h=0} satisfies
 \begin{equation}\label{obser1.}
\int_0^1v^2(0,x) dx \le C_T\int_0^T \int_{w}v^2(t,x)dxdt.
\end{equation}
\end{Proposition}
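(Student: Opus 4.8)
The plan is to derive the observability inequality \eqref{obser1.} from the Carleman estimate in Theorem \ref{Cor1} by the standard two-step argument: first obtain an estimate of $\int_0^1 v^2(0,x)\,dx$ by the weighted integral of $v^2$ over the whole cylinder $Q_T$ restricted to a suitable time interval, and then absorb the ``full-space'' information into a local (in space) integral over $\omega$ by means of a Caccioppoli-type inequality. A routine density/regularization argument lets us assume throughout that $v$ is as regular as needed: by Theorem \ref{th-parabolic} applied backwards in time, if $v_T\in H^1_a(0,1)$ then $v\in\mathcal V$, so the Carleman estimate applies; the general case $v_T\in L^2(0,1)$ follows by approximation once the inequality is established with a constant independent of the data.

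First I would establish the \emph{time-localization} step. Since $v$ solves the homogeneous equation, multiplying by $v$ and integrating on $(0,1)$ shows that $t\mapsto\int_0^1 v^2(t,x)\,dx$ is nonincreasing (here one uses $\int_0^1 (av_x)_x v\,dx=-\int_0^1 a v_x^2\,dx\le 0$, the boundary terms vanishing by the domain characterization in Proposition \ref{domain}). Hence $\int_0^1 v^2(0,x)\,dx\le \frac{1}{T'}\int_{t_1}^{t_2}\!\int_0^1 v^2(t,x)\,dx\,dt$ for any subinterval $[t_1,t_2]\subset(0,T)$ of length $T'$, say $[T/4,3T/4]$. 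On such a subinterval the weight $\Theta(t)$ is bounded above and below by positive constants, and $\varphi=\Theta\psi$ is bounded, so $e^{2s\varphi}$ is bounded below there; combining with $\frac{(x-x_0)^2}{a}$ being bounded below away from the endpoints — more precisely, estimating $\int_0^1 v^2\,dx$ by splitting into a neighborhood of $x_0$ and its complement and using Lemma \ref{rem} where needed — the left-hand side of \eqref{obser1.} is bounded by $C\int_0^T\!\int_0^1 s^3\Theta^3\frac{(x-x_0)^2}{a}v^2 e^{2s\varphi}\,dxdt$, hence by the Carleman estimate of Theorem \ref{Cor1} with $h=0$. The boundary term in \eqref{car} vanishes since $v(t,0)=v(t,1)=0$ and $v(t,\cdot)\in H^2_a$ gives $a v_x$ continuous up to the endpoints. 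Thus $\int_0^1 v^2(0,x)\,dx\le C\int_0^T\!\int_0^1 s^3\Theta^3\frac{(x-x_0)^2}{a}v^2 e^{2s\varphi}\,dxdt$ for a fixed large $s$.

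The main obstacle is the \emph{space-localization} step: the right-hand side above still integrates over all of $(0,1)$, and we must replace it by an integral over $\omega=(\alpha,\beta)$. The idea is to choose two cut-off functions: one supported in a slightly larger interval $\omega'$ with $\overline\omega'\subset(0,1)$ and equal to $1$ on $\omega$'s complement's ``dangerous'' part, and to exploit that on $(0,1)\setminus\omega$ the equation is \emph{non-degenerate} (since $x_0\in\omega$, $a$ is bounded below on $(0,1)\setminus\omega$), so one can apply a classical non-degenerate Carleman estimate on $(0,\alpha')$ and $(\beta',1)$ with $\omega$ as the observation region, or more directly a Caccioppoli inequality. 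Concretely, introduce $\xi\in C^\infty_c(0,1)$ with $\xi\equiv 1$ on $[\alpha',\beta']\supset\supset[\alpha,\beta]^c\cap\{\text{bad part}\}$ — actually the cleaner route, which I would follow, is: pick $\alpha<\alpha'<x_0<\beta'<\beta$, write the region $(0,1)=(0,\beta')\cup(\alpha',1)$, on each piece the operator is either non-degenerate near one endpoint or controlled, and use a Caccioppoli-type inequality of the form $\int_0^T\!\int_{(0,1)\setminus\omega}\Theta^3 v^2\,dxdt\le C\int_0^T\!\int_\omega v^2\,dxdt$, proved by multiplying the equation by $\zeta^2\Theta^3 e^{2s\varphi}v$ for a suitable cut-off $\zeta$ vanishing outside $\omega$ and integrating by parts. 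This is where the bulk of the technical work lies, and I expect the paper states this Caccioppoli inequality as a separate lemma (referenced as the ``Caccioppoli type inequality'' in the introduction). Granting it, on $(0,1)\setminus\omega$ the weight $\frac{(x-x_0)^2}{a}e^{2s\varphi}$ is bounded above, so that part of the integral is $\le C\int_0^T\!\int_\omega v^2$; on $\omega$ itself the whole integrand is bounded by $C v^2$ since $\frac{(x-x_0)^2}{a}$ and $e^{2s\varphi}$ are bounded on $\overline\omega$ by Lemma \ref{rem}. Adding the two contributions yields $\int_0^1 v^2(0,x)\,dx\le C_T\int_0^T\!\int_\omega v^2\,dxdt$, which is \eqref{obser1.}. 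Finally, a density argument extends the estimate from $v_T\in H^1_a$ to $v_T\in L^2(0,1)$, completing the proof.
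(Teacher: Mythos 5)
Your overall architecture (Carleman estimate $+$ localization $+$ density) matches the paper's, but two of the steps you rely on contain genuine errors, and they are precisely the steps where the real work lies. First, the boundary term in \eqref{car} does \emph{not} vanish for $v$ itself: it is $s\int_0^T\big[a\Theta e^{2s\varphi}(x-x_0)(v_x)^2\,dt\big]_{x=0}^{x=1}$, which involves $(v_x)^2$ at $x=0,1$, and the Dirichlet conditions kill $v$ there, not $v_x$. Evaluating the signs ($x-x_0>0$ at $x=1$, $<0$ at $x=0$) shows this term is \emph{nonnegative} and sits on the right-hand side, so it cannot simply be dropped. The paper never applies Theorem \ref{Cor1} to $v$ directly; in Lemma \ref{lemma3} it applies it to $w=\xi v$ with $\xi$ a cutoff supported in $\omega$, so that $w$ vanishes identically near $x=0,1$ and the boundary term disappears, at the price of commutator terms $(a\xi_x v)_x+\xi_x a v_x$ supported in $\omega$, which are then absorbed using the Caccioppoli inequality. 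Second, the inequality you call ``Caccioppoli-type,'' namely $\int_0^T\!\int_{(0,1)\setminus\omega}\Theta^3 v^2\,dxdt\le C\int_0^T\!\int_\omega v^2\,dxdt$, is not a Caccioppoli inequality and cannot be proved by multiplying the equation by $\zeta^2\Theta^3e^{2s\varphi}v$ with $\zeta$ vanishing outside $\omega$: such a test function produces information only \emph{inside} $\omega$. An estimate of $v^2$ outside the observation region by $v^2$ inside it is essentially the observability inequality you are trying to prove, so your space-localization step begs the question. The genuine Caccioppoli inequality of the paper (Proposition \ref{caccio}) bounds $\int_0^T\!\int_{\omega'}(v_x)^2e^{2s\varphi}$ by $\int_0^T\!\int_\omega v^2$ for $\omega'\subset\subset\omega$ (gradient inside by function inside), and the passage from the full-interval Carleman integral to $\int_\omega v^2$ is instead achieved by patching: the degenerate Carleman estimate on a cutoff supported in $\omega$ around $x_0$, classical non-degenerate Carleman estimates (Proposition \ref{classical Carleman}) on $[\beta_2,1]$ and — after an odd reflection across $x=0$ to neutralize the left-endpoint boundary term — on $[0,\lambda_1]$, with careful comparison of the two weight functions.

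Two further points. In your time-localization step, the claim that $\int_0^1 v^2(0,x)\,dx$ is controlled by $C\int_0^T\!\int_0^1 s^3\Theta^3\frac{(x-x_0)^2}{a}v^2e^{2s\varphi}\,dxdt$ fails near $x_0$, where by Lemma \ref{rem} the weight $\frac{(x-x_0)^2}{a}$ tends to $0$; your parenthetical ``split near $x_0$'' does not fix this unless you additionally use that the neighborhood of $x_0$ lies in $\omega$ and estimate that piece directly. The paper avoids the issue entirely: Lemma \ref{obser.regular} shows $t\mapsto\int_0^1 a(v_x)^2\,dx$ is increasing, bounds $\int_0^1 a(v_x)^2(0,x)\,dx$ by the \emph{gradient} term $s\Theta a(v_x)^2e^{2s\varphi}$ of the Carleman estimate, and then recovers $\int_0^1 v^2(0,x)\,dx$ via the Hardy--Poincar\'e inequality with $p=(a|x-x_0|^4)^{1/3}$ and the lower bound on $\big(a/(x-x_0)^2\big)^{1/3}$. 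Finally, a small sign slip: since $v_t=-(av_x)_x$, one has $\frac{d}{dt}\frac12\int_0^1v^2\,dx=+\int_0^1 a(v_x)^2\,dx\ge0$, so $t\mapsto\int_0^1v^2(t,x)\,dx$ is non\emph{de}creasing — which is, fortunately, the direction your averaging inequality actually requires.
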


\subsection{Proof of Proposition \ref{obser.}} In
this subsection we will prove, as a consequence of the Carleman
estimate proved in Section \ref{Carleman estimate}, the
observability inequality \eqref{obser1.}. For this purpose, we will
give some preliminary results. As a first step, we consider the
adjoint problem with more regular final--time datum
\begin{equation}\label{h=01}
\begin{cases}
v_t +(av_x)_{x}= 0, &(t,x) \in  Q_T,
\\[5pt]
v(t,0)=v(t,1) =0, & t \in (0,T),
\\[5pt]
v(T,x)= v_T(x) \,\in D({\cal A}^2),
\end{cases}
\end{equation}
where
\[
D({\cal A}^2) = \Big\{u \,\in \,D({\cal A})\;\big|\; {\cal A}u \,\in
\,D({\cal A}) \;\Big\}
\]
and ${\cal A}u:=(au_x)_x$. Observe that $D({\cal A}^2)$ is densely
defined in $D({\cal A})$ (see, for example, \cite[Lemma 7.2]{b}) and
hence in $L^2(0,1)$. As in \cite{cfr}, \cite{cfr1} or \cite{f},
letting $v_T$ vary in $D({\cal A}^2)$, we define the following class
of functions:
\[
\cal{W}:=\Big\{ v\text{ is a solution of \eqref{h=01}}\Big\}.
\]
Obviously (see, for example, \cite[Theorem 7.5]{b})
\[ \cal{W}\subset
C^1\big([0,T]\:;\:H^2_a(0,1)\big) \subset \mathcal{V} \subset
\cal{U},
\]
where, $\mathcal{V}$ is defined in \eqref{v} and
\begin{equation}\label{U}
\cal{U}:= C([0,T]; L^2(0,1)) \cap L^2(0, T; H^1_a(0,1)).
\end{equation}

We start with
\begin{Proposition}[Caccioppoli's inequality]\label{caccio}
Let $\omega'$ and $\omega$ two open subintervals of $(0,1)$ such
that $\omega'\subset \subset \omega \subset  (0,1)$ and $x_0 \not
\in \bar\omega'$. Let $\varphi(t,x)=\Theta(t)\Upsilon(x)$, where
$\Theta$ is defined in \eqref{c_1} and
\[
\Upsilon \in C([0,1],(-\infty,0))\cap
C^1([0,1]\setminus\{x_0\},(-\infty,0))
\]
is such that
\begin{equation}\label{stimayx}
|\Upsilon_x|\leq \frac{c}{\sqrt{a}} \mbox{ in }[0,1]\setminus\{x_0\}
\end{equation}
for some $c>0$. Then, there exist two positive constants $C$ and
$s_0$ such that every solution $v \in \cal W$ of the adjoint problem
\eqref{h=01} satisfies
\begin{equation}\label{lemme-caccio}
   \int_{0}^T \int _{\omega'}   (v_x)^2e^{2s\varphi } dxdt
    \ \leq \ C \int_{0}^T \int _{\omega}   v^2  dxdt,
\end{equation}
for all $s\geq s_0$.
\end{Proposition}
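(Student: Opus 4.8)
The plan is to use the standard cut-off argument. First I would fix a smooth cut-off function $\xi\in C^\infty_c(\omega)$ such that $0\le\xi\le 1$, $\xi\equiv 1$ on $\omega'$, and (crucially) $\supp\xi$ is a compact subset of $\omega\setminus\{x_0\}$, which is possible since $x_0\notin\bar\omega'$ and $\omega'\subset\subset\omega$. On $\supp\xi$ the coefficient $a$ is bounded above and below by positive constants, so all the weighted norms there are equivalent to the usual ones; this is what lets the argument go through despite the interior degeneracy. Then I would compute, for $v\in\mathcal W$ a solution of \eqref{h=01},
\[
\frac{d}{dt}\int_0^1 \xi^2 e^{2s\varphi} v^2\,dx
= \int_0^1 \left(2s\varphi_t \xi^2 e^{2s\varphi}v^2 + 2\xi^2 e^{2s\varphi} v v_t\right) dx,
\]
and substitute $v_t=-(av_x)_x$ from the equation. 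Integrating by parts the term $-2\int_0^1 \xi^2 e^{2s\varphi} v (av_x)_x\,dx$ (no boundary contributions appear since $\xi$ is compactly supported in the interior), one obtains $2\int_0^1 a\xi^2 e^{2s\varphi}(v_x)^2\,dx$ plus cross terms of the form $\int_0^1 a v_x v\,\partial_x(\xi^2 e^{2s\varphi})\,dx$.

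Next I would absorb the cross terms with Young's inequality: each such term is bounded by $\frac12\int_0^1 a\xi^2 e^{2s\varphi}(v_x)^2\,dx$ plus a term of the form $C_s\int_{\supp\xi} v^2\,dx$, where the constant $C_s$ depends on $s$, on $\sup|\varphi_x|$, $\sup|\varphi_t|$ and $\sup\xi'$ over $\supp\xi$, all of which are finite because $\supp\xi$ stays away from $x_0$ and $\varphi(t,x)=\Theta(t)\Upsilon(x)$ with $\Theta$ blowing up only as $t\to 0^+,T^-$ — and the weight $e^{2s\varphi}\le 1$ tames that, while the hypothesis \eqref{stimayx} controls $\Upsilon_x$ on $\supp\xi$. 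After absorption this yields
\[
\frac{d}{dt}\int_0^1 \xi^2 e^{2s\varphi} v^2\,dx + \int_0^1 a\xi^2 e^{2s\varphi}(v_x)^2\,dx \le C_s\int_\omega v^2\,dx.
\]
Finally I would integrate this differential inequality in $t$ over $(0,T)$. Here I use that $\Theta(t)\to+\infty$ as $t\to 0^+$ and $t\to T^-$ while $\Upsilon<0$, so $e^{2s\varphi(t,x)}\to 0$ at $t=0$ and $t=T$ (uniformly on $\supp\xi$); hence the boundary terms $\big[\int_0^1\xi^2 e^{2s\varphi}v^2\,dx\big]_{t=0}^{t=T}$ vanish. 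Using $a\ge a_0>0$ on $\supp\xi$ and $\xi\equiv1$ on $\omega'$ gives
\[
\int_0^T\int_{\omega'}(v_x)^2 e^{2s\varphi}\,dxdt \le \frac{1}{a_0}\int_0^T\int_0^1 a\xi^2 e^{2s\varphi}(v_x)^2\,dxdt \le C\int_0^T\int_\omega v^2\,dxdt,
\]
which is \eqref{lemme-caccio}. The only mildly delicate point — and the main thing to watch — is making sure the time-boundary terms really drop out uniformly; this is exactly why the weight $\varphi$ with its $\Theta$-factor is used rather than a time-independent weight, and why $\supp\xi$ must avoid $x_0$ so that $\Upsilon$ is bounded there. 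All constants obtained depend on $s$, but since the statement asks only for existence of $C$ and $s_0$ with the estimate holding for $s\ge s_0$, fixing $s=s_0$ suffices (the estimate for larger $s$ then follows since $e^{2s\varphi}$ is decreasing in $s$ where $\varphi<0$, or one simply reruns the argument for each $s$).
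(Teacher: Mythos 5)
Your argument is correct and rests on the same core mechanism as the paper's proof: differentiate $\int_0^1\xi^2e^{2s\varphi}v^2\,dx$ in time, substitute $v_t=-(av_x)_x$, integrate by parts, absorb the cross term by Young's inequality, and use that the time-boundary contributions vanish since $e^{2s\varphi}\to0$ as $t\to0^+,T^-$. The one genuine difference is the cut-off: you take $\supp\xi$ to be a compact subset of $\omega\setminus\{x_0\}$ (legitimate, since $\bar\omega'$ is a compact connected subset of the open set $\omega\setminus\{x_0\}$), so that $a$ is bounded above and below on $\supp\xi$ and the absorption step is entirely non-degenerate --- in particular you never actually need hypothesis \eqref{stimayx} there. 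The paper instead uses the standard cut-off supported in all of $\omega$, whose support may contain $x_0$, and handles the degeneracy head-on: it bounds $s\varphi_te^{2s\varphi}$ uniformly for $s\ge s_0$ using $|\dot\Theta|\le c\,\Theta^{5/4}$ and $\Upsilon<0$, and bounds $a\,[(\xi^2e^{2s\varphi})_x]^2/(\xi^2e^{2s\varphi})$ by a constant precisely via $|\Upsilon_x|\le c/\sqrt a$, which keeps $s^2a(\varphi_x)^2e^{2s\varphi}$ bounded even near $x_0$. Your route is slightly simpler for the statement as given; the paper's route explains why \eqref{stimayx} appears in the hypotheses and produces constants manifestly uniform in $s\ge s_0$, whereas you recover that uniformity a posteriori from the monotonicity of $e^{2s\varphi}$ in $s$ where $\varphi<0$ (a valid fix --- but note that your alternative of ``rerunning the argument for each $s$'' would not by itself give an $s$-independent $C$). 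One small wording slip: $\sup|\varphi_t|$ over $\supp\xi\times(0,T)$ is not finite, since $\dot\Theta$ blows up at $t=0,T$; it is only the product $s\varphi_te^{2s\varphi}$ that is bounded, as your own parenthetical remark in fact acknowledges.
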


\begin{Remark}
Of course, our prototype for $\Upsilon$ is the function $\psi$
defined in \eqref{c_1}. Indeed,
\[
|\psi'(x)|=c_1\left|\frac{x-x_0}{a(x)}\right|=c_1\sqrt{\frac{|x-x_0|^2}{a(x)}}\frac{1}{\sqrt{a(x)}}\leq
c\frac{1}{\sqrt{a(x)}}
\]
by Lemma \ref{rem}.
\end{Remark}

\begin{proof}[Proof of Proposition $\ref{caccio}$]
Let us consider a smooth function $\xi: [0,1] \to \Bbb R$ such that
  \[\begin{cases}
    0 \leq \xi (x)  \leq 1, &  \text{for all } x \in [0,1], \\
    \xi (x) = 1 ,  &   x \in \omega', \\
    \xi (x)=0, &     x \in [\, 0, 1 ]\setminus \omega.
    \end{cases}\]
Since $v$ solves \eqref{h=01} and has homogeneous boundary
conditions, by the choice of $\vp$, we have
\[
    \begin{aligned}
    0 &= \int _0 ^T \frac{d}{dt} \left(\int _0 ^1 \xi ^2 e^{2s\varphi}
    v^2dx\right)dt
    =  \int_0^T \int_0^1(2s \xi ^2  \varphi _t e^{2s\varphi} v^2 + 2 \xi ^2
    e^{2s\varphi} vv_t )dxdt
    \\
    &= 2 s\int_0^T \int_0^1 \xi ^2 \varphi _t e^{2s\varphi} v^2dxdt + 2 \int_0^T \int_0^1\xi
    ^2e^{2s\varphi} v (-(a v_x)_x)dxdt
    \\
    &= 2s \int_0^T \int_0^1\xi ^2 \varphi _t e^{2s\varphi} v^2 dxdt +  2 \int_0^T \int_0^1 ( \xi
    ^2e^{2s\varphi} v )_x a v_xdxdt
    \\
    &= 2 s\int_0^T \int_0^1 \xi ^2 \varphi _t e^{2s\varphi} v^2 dxdt+  2 \int_0^T \int_0^1( \xi
    ^2e^{2s\varphi})_xavv_xdxdt\\& +  2 \int_0^T \int_0^1  \xi
    ^2e^{2s\varphi}a(v_x)^{2}dxdt\\
    & = 2s \int_0^T \int_\omega \xi ^2 \varphi _t e^{2s\varphi} v^2 dxdt+  2 \int_0^T \int_\omega( \xi
    ^2e^{2s\varphi})_xavv_xdxdt\\& +  2 \int_0^T \int_\omega  \xi
    ^2e^{2s\varphi}a(v_x)^{2}dxdt.
\end{aligned}
    \]
Hence, by definition of $\xi$ and the Cauchy--Schwartz inequality,
the previous identity gives
\[
    \begin{aligned}
    &2\int_0^T \int_\omega\xi^2 e^{2s\varphi} a (v_x)^2dxdt
    =- 2s \int_0^T \int_\omega \xi^2 \varphi _t e^{2s\varphi} v^2dxdt\\
& -  2 \int_0^T \int_\omega ( \xi^2e^{2s\varphi} )_x avv_x dxdt\\
& \le - 2s\int_0^T \int_\omega \xi^2 \varphi _t e^{2s\varphi}
v^2dxdt +\int_0^T \int_\omega\left( \sqrt{a} \xi e^{s\varphi} v_x \right) ^2dxdt\\
&  + \int_0^T \int_\omega\left( \sqrt{a} \frac{( \xi^2e^{2s\varphi}
)_x}{\xi e^{s\varphi} }v \right)^2dxdt\\
& = - 2 s\int_0^T \int_\omega \xi^2 \varphi _t e^{2s\varphi}
v^2dxdt+ \int_0^T \int_\omega \xi^2  e^{2s\varphi} a(v_x)^2
dxdt\\
& + \int_0^T \int_\omega  \frac{[( \xi^2e^{2s\varphi} )_x]^2}{\xi^2
e^{2s\varphi} }av^2 dxdt.
\end{aligned}
\]
Thus,
\[\begin{aligned}
\int_0^T \int_\omega\xi^2 e^{2s\varphi} a (v_x)^2dxdt &\le - 2
\int_0^T \int_\omega \xi^2 s\varphi _t e^{2s\varphi} v^2dxdt \\
&+ \int_0^T \int_\omega  \frac{[( \xi ^2e^{2s\varphi} )_x]^2}{\xi^2
e^{2s\varphi} }av^2 dxdt.
\end{aligned}
\]
Since $x_0 \not \in \bar\omega ' $, then
\[ \begin{aligned}
&\inf_{x\in\omega'}a(x)\int_0^{T}\int _{\omega '} e^{2s\varphi}
(v_x)^2dxdt \le \int_0^T \int_{\bar{\omega}'} \xi^2
e^{2s\varphi} a (v_x)^2dxdt\\
&\le \int_0^T \int_\omega \xi^2
e^{2s\varphi} a (v_x)^2dxdt\\
&\le - 2 \int_0^T \int_\omega \xi^2
s\varphi_t e^{2s\varphi} v^2dxdt + \int_0^T \int_\omega  \frac{[(
\xi ^2e^{2s\varphi} )_x]^2}{\xi^2 e^{2s\varphi} }av^2 dxdt.
\end{aligned}
\]

Calculations show that $s\varphi_te^{2s\varphi}$ is uniformly
bounded if $s\geq s_0>0$, since $\Upsilon$ is strictly negative, a
rough estimate being
\[
|s\vp_te^{2s\varphi}|\leq c\frac{1}{s_0^{1/4}(-\max
\Upsilon)^{1/4}}.
\]
Indeed, $|\dot \Theta|\leq c \Theta^{5/4}$ and
\[
|s\vp_te^{2s\varphi}|\leq cs(-\Upsilon)\Theta^{5/4}e^{2s\vp}\leq
\frac{c}{\big(s(-\Upsilon)\big)^{5/4}}
\]
for some constants $c>0$ which may vary at every step.

On the other hand, $\displaystyle\frac{[( \xi ^2e^{2s\varphi}
)_x]^2}{\xi^2 e^{2s\varphi} }$ can be estimated by
\[
C\big(e^{2s\vp}+s^2(\vp_x)^2e^{2s\vp}\big).
\]
Of course, $e^{2s\vp}<1$, while $s^2(\vp_x)^2e^{2s\vp}$ can be
estimated with
\[
\frac{c}{(-\max \Upsilon)^2}(\Upsilon_x)^2 \leq \frac{c}{a}
\]
by \eqref{stimayx}, for some constants $c>0$.

In conclusion, we can find a positive constant $C$ such that
\[
\begin{aligned}
- 2& \int_0^T \int_\omega \xi ^2 s\varphi_t e^{2s\varphi} v^2dxdt
   + \int_0^T \int_\omega  \frac{[( \xi ^2e^{2s\varphi} )_x]^2}{\xi^2
    e^{2s\varphi} }av^2 dxdt\\
 &  \le C\int_0 ^T \int _{\omega} v^2 dxdt,
   \end{aligned}
\]
and the claim follows.
\end{proof}

We shall need the following lemma:
\begin{Lemma}\label{lemma3}
Assume Hypothesis $\ref{Ass02}$ and \eqref{omega1}. Then there exist two positive
constants $C$ and $s_0$ such that every solution $v \in \cal W$ of
\eqref{h=01} satisfies, for all $s \ge s_0$,
\[
\int_0^T\int_0^1\left( s \Theta a v_{x}^{2} + s^3 \Theta ^3
\frac{(x-x_0)^2}{a} v^{2}\right) e^{{2s\varphi}}  dxdt\le C
\int_0^T\int_{w}v^{2} dxdt.
\]
Here $\Theta$ and $\varphi$ are as in \eqref{c_1}.
\end{Lemma}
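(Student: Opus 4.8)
The plan is to combine the Carleman estimate of Theorem \ref{Cor1} (applied to solutions of the homogeneous adjoint problem, so $h=0$ and the boundary terms vanish by the homogeneous boundary conditions, since $w \in \mathcal V$ forces $a\Theta e^{2s\varphi}(x-x_0)(v_x)^2$ to be controlled at $x=0,1$ — actually the boundary term is $sc_1\int_0^T[a\Theta e^{2s\varphi}(x-x_0)(v_x)^2]_{x=0}^{x=1}dt$, and $a(x-x_0)(v_x)^2$ is bounded at the endpoints since $av_x\in H^1\subset C$, while $\Theta e^{2s\varphi}$ is bounded, so in fact this term is finite but not obviously zero; the correct reading is that $\psi'(0),\psi'(1)$ have definite signs making the boundary term have a favorable sign, so it can be dropped from the left-hand side estimate) with a localization argument near $x_0$ together with Caccioppoli's inequality (Proposition \ref{caccio}) away from $x_0$. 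The output of Theorem \ref{Cor1} gives control of the full weighted energy $\int_0^T\int_0^1 (s\Theta a v_x^2 + s^3\Theta^3\frac{(x-x_0)^2}{a}v^2)e^{2s\varphi}dxdt$ in terms of the boundary terms only; the remaining work is to absorb those and express everything in terms of $\int_0^T\int_\omega v^2\,dxdt$, which requires re-inserting a local energy term and estimating it.

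More precisely, here is the sequence of steps. First I would introduce a cutoff $\xi\in C^\infty([0,1])$ with $\xi\equiv 1$ on a neighborhood of $x_0$ contained in $\omega'\subset\subset\omega$ and $\xi\equiv 0$ outside $\omega'$, and also its complement idea: write $v = \xi v + (1-\xi)v$ is not quite the move; rather, the standard trick is to apply the Carleman estimate and then notice that on the set where $\psi'$ (equivalently $x-x_0$) is bounded away from $0$, i.e. outside a neighborhood of $x_0$, the degenerate weight $\frac{(x-x_0)^2}{a}$ is comparable (up to constants) to $\frac1a$ and $a$ is bounded below, so there the Carleman weighted norm is a genuine nondegenerate parabolic Carleman norm; but the cleanest route here, following \cite{acf}, is: (i) apply Theorem \ref{Cor1} to $v\in\mathcal W$, getting the left-hand side bounded by $C\cdot sc_1\int_0^T[a\Theta e^{2s\varphi}(x-x_0)(v_x)^2]_{x=0}^{x=1}dt$ since $h=0$; (ii) observe that this boundary term, evaluated at $x=0$ and $x=1$ where $a>0$, involves $(v_x)^2$ at the endpoints — but by the sign of $\psi'$ (negative on the left of $x_0$, positive on the right, so $(x-x_0)\psi'\ge 0$... wait, at $x=1>x_0$ we have $\psi'(1)>0$ and $x-x_0>0$; at $x=0<x_0$, $\psi'(0)<0$ and $x-x_0<0$, so the product is $\ge 0$ at both ends), hence this boundary term has a sign and in fact the estimate in Theorem \ref{Cor1} should be read so that it can be discarded; if not discardable, one splits $(0,1)$ and uses interior Carleman estimates near the boundary. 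Given the structure, I will instead take the shortcut that for $v\in\mathcal W$ the relevant boundary terms vanish (as they do in \cite{acf} under the domain characterization of Proposition \ref{domain}), so that the left-hand side is already $\le 0$ plus nothing — which is absurd unless we re-introduce a positive term; thus the real argument must re-add a local term: $\int_0^T\int_0^1(\dots)e^{2s\varphi}dxdt \le \int_0^T\int_{\omega'}(\dots)e^{2s\varphi}dxdt + (\text{good part})$, i.e. one uses a nonhomogeneous Carleman estimate with a cutoff.

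Concretely, the main step is the following cutoff computation. Let $\xi$ be a smooth function equal to $1$ on $[0,1]\setminus\omega$ and $0$ on $\omega'$ with $\omega'\subset\subset\omega$ chosen so that $x_0\in\omega'$. Set $z=\xi v$; then $z$ solves a problem of the form $z_t+(az_x)_x = (a\xi'v)_x + a\xi'v_x =: h_\xi$, which is supported in $\omega\setminus\omega'$ where $a$ is nondegenerate, and $h_\xi$ involves only $v$ and $v_x$ on that set. Apply Theorem \ref{Cor1} to $z$ (whose boundary terms at $0,1$ vanish since $\xi\equiv 1$ near the boundary so $z=v$ has homogeneous boundary data, and the endpoint weight terms match those one can absorb): the left-hand side controls $\int_0^T\int_{[0,1]\setminus\omega}(s\Theta a v_x^2 + s^3\Theta^3\frac{(x-x_0)^2}{a}v^2)e^{2s\varphi}dxdt$. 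The right-hand side is $C\int_0^T\int_{\omega\setminus\omega'}|h_\xi|^2 e^{2s\varphi}dxdt$, which since $a$, $\Theta e^{2s\varphi}$ are bounded above and below on that compact set away from $x_0$, is bounded by $C\int_0^T\int_\omega (v^2 + v_x^2)\,dxdt$. Finally, to remove $v_x$ from the right-hand side I invoke Caccioppoli's inequality (Proposition \ref{caccio}) with $\omega'$ replaced by $\omega\setminus\omega'$ (note $x_0\notin\overline{\omega\setminus\omega'}$, so the hypothesis is met, with $\Upsilon=\psi$, using the Remark that $|\psi'|\le c/\sqrt a$), giving $\int_0^T\int_{\omega\setminus\omega'}(v_x)^2 e^{2s\varphi}dxdt\le C\int_0^T\int_\omega v^2\,dxdt$; combining, and adding back the trivial estimate of $\int_0^T\int_\omega(s\Theta a v_x^2 + s^3\Theta^3\frac{(x-x_0)^2}{a}v^2)e^{2s\varphi}dxdt$ — which I cannot do directly since $\frac{(x-x_0)^2}{a}$ and $a$ are fine on $\omega$ but $v_x$ is not controlled there without Caccioppoli again; so I apply Caccioppoli once more on $\omega'\supset$ a neighborhood of $x_0$... here care is needed because $x_0\in\omega'$ so Proposition \ref{caccio} as stated does not apply on $\omega'$ — instead the $v^2$ term on $\omega$ is directly fine, and the $v_x^2$ term on $\omega$ is bounded using Caccioppoli on subintervals of $\omega$ not containing $x_0$ plus the already-obtained global bound near $x_0$. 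Summing these contributions yields $\int_0^T\int_0^1(s\Theta a v_x^2 + s^3\Theta^3\frac{(x-x_0)^2}{a}v^2)e^{2s\varphi}dxdt\le C\int_0^T\int_\omega v^2\,dxdt$, which is the claim.

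The main obstacle I anticipate is handling the interaction at and near $x_0$ inside $\omega$: Caccioppoli's inequality as stated (Proposition \ref{caccio}) explicitly excludes $x_0$ from $\overline{\omega'}$, so one cannot naively bound $\int_0^T\int_\omega v_x^2\,dxdt$ by $\int_0^T\int_\omega v^2\,dxdt$ on a set containing the degeneracy. The resolution — which is the technical heart of the proof — is to keep the $v_x$-contribution near $x_0$ inside the \emph{left-hand side} via the cutoff-Carleman step (so it never needs to be re-estimated on the right), and only use Caccioppoli on the annular regions $\omega\setminus\omega'$ where the equation is uniformly parabolic. Getting the cutoff bookkeeping exactly right — verifying that all boundary terms produced by applying Theorem \ref{Cor1} to $\xi v$ genuinely vanish or have the correct sign, and that the commutator source $h_\xi$ is supported off $x_0$ — is where the care must go; the weight estimates ($\Theta e^{2s\varphi}$ bounded, $|\psi'|\le c/\sqrt a$, $a$ bounded on compacta away from $x_0$) are routine.
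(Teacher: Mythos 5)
Your proposal has the cutoff oriented the wrong way round, and this creates two gaps the argument cannot recover from. First, by taking $\xi\equiv 1$ on $[0,1]\setminus\omega$ you keep $z=\xi v$ equal to $v$ near $x=0$ and $x=1$, so the boundary term $sc_1\int_0^T\left[a\Theta e^{2s\varphi}(x-x_0)(z_x)^2\right]_{x=0}^{x=1}dt$ in Theorem \ref{Cor1} becomes $sc_1\int_0^T a(1)\Theta e^{2s\varphi(t,1)}(1-x_0)(v_x(t,1))^2dt+sc_1\int_0^T a(0)\Theta e^{2s\varphi(t,0)}x_0(v_x(t,0))^2dt$, which is nonnegative and sits on the right-hand side with a plus sign: it is not ``favorable'', it cannot be dropped, and homogeneous Dirichlet data for $v$ give no control of $v_x$ at the endpoints. (Your sign discussion conflates the pointwise sign of $(x-x_0)\psi'$ with the sign of the evaluated bracket; the bracket is a difference, value at $1$ minus value at $0$, and both contributions come out with the bad sign.) Second, since $\xi\equiv 0$ on $\omega'\ni x_0$, the left-hand side of the Carleman estimate applied to $\xi v$ carries no information on $\omega'$, so the term $\int_0^T\int_{\omega'}s\Theta a(v_x)^2e^{2s\varphi}dxdt$ near the degeneracy is never produced; it cannot be recovered by Caccioppoli either, precisely because Proposition \ref{caccio} requires $x_0\notin\overline{\omega'}$. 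Your closing claim that the gradient near $x_0$ is ``kept inside the left-hand side via the cutoff-Carleman step'' contradicts your own choice of cutoff.

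The paper does the opposite: it takes $\xi\equiv 1$ on $[\lambda_1,\beta_2]\ni x_0$ with $\supp\xi\subset\omega$, so that $w=\xi v$ vanishes near $x=0,1$, the boundary terms of the degenerate Carleman estimate vanish identically, the full weighted energy on a neighbourhood of $x_0$ lands on the left, and the commutator is supported in $\omega$ away from $x_0$, where Caccioppoli applies. The outer regions $[\beta_2,1]$ and $[0,\lambda_1]$, which your scheme tries to reach with the degenerate estimate, are instead covered by the classical nondegenerate Carleman estimate of Proposition \ref{classical Carleman} applied to further cutoffs of $v$, together with a weight comparison obtained by choosing $c_1$ large; near $x=0$ this additionally requires the odd-reflection construction \eqref{W}--\eqref{tildea}, because the classical estimate only yields a good sign for the boundary term at the right endpoint of the interval. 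These two ingredients --- classical Carleman estimates on the nondegenerate outer regions and the reflection at $x=0$ --- are absent from your proposal and are exactly what is needed to avoid the uncontrolled endpoint traces of $v_x$.
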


For the proof of the previous lemma we need the following classical
Carleman estimate (see, for example \cite[Proposition 4.4]{acf}):
\begin{Proposition}[\bf Classical Carleman estimates]\label{classical
Carleman}
Let $z$ be the solution of
\begin{equation}
    \label{eq-z*}
    \begin{cases}
      z_t + (a  z_x) _x= h\in L^2\big((0, T)\times (A,B)\big),
       \\
    z(t,A)= z(t,B)=0, \; t \in (0,T),
    \end{cases}
    \end{equation}
where $a\in C^1\big([A,B]\big)$ is a strictly positive function.
Then there exist positive constants $c$, $r$ and $s_0$ such that for
any $s\geq s_0$
\begin{equation}\label{carcorretta}
\begin{aligned}
& \int_0^T\int_A^B s \Theta e^{r\zeta }(z_x)^2e^{-2s\Phi}
dxdt+\int_0^T\int_A^B s^3\Theta^3 e ^{3r\zeta}z^2e^{-2s\Phi}
dxdt \\
& \le c \int_0^T\int_A^B e^{-2s\Phi} h^2 dxdt- c \int_0^T \left[
\sigma(t, \cdot) e^{-2s \Phi(t,
\cdot)}|z_x(t,\cdot)|^2\right]_{x=A}^{x=B}dt.
\end{aligned}
\end{equation}
Here the functions $\zeta, \sigma$ and $\Phi$ are defined in the
following way:
\[
\zeta (x):= \int_x^B\frac{1}{\sqrt{a(y)}}dy,\quad \sigma (t,x) := rs
\Theta(t) e^{r\zeta(x)},
\]
\[
\Phi (t,x):= \Theta(t)\Psi(x) \mbox{ and }\Psi(x):=e^{2r\zeta(A)} -
e^{r\zeta(x)}>0,
\]
where $(t,x) \in [0,T] \times[A,B]$ and $\Theta$ is defined in
\eqref{c_1}.
\end{Proposition}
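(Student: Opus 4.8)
The plan is to prove this by the classical Fursikov--Imanuvilov conjugation argument, which runs in complete parallel to the computation carried out above for the degenerate operator in Lemmas \ref{lemma1}--\ref{lemma4}, but is considerably simpler because on $[A,B]$ the coefficient $a$ is uniformly elliptic. Since the weight in \eqref{carcorretta} is $e^{-2s\Phi}$ with $\Phi=\Theta\Psi>0$, I set $w:=e^{-s\Phi}z$, so that $w(t,A)=w(t,B)=0$ and the two leading left-hand integrals become $\int s\Theta e^{r\zeta}a(w_x)^2$ and $\int s^3\Theta^3 e^{3r\zeta}w^2$, up to converting $w_x$ back to $z_x$ at the end. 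I then conjugate $Lz:=z_t+(az_x)_x$, writing $L_sw:=e^{-s\Phi}L(e^{s\Phi}w)$, and split $L_sw=L_s^+w+L_s^-w$ into its formally self-adjoint and skew-adjoint parts exactly as in Section \ref{Carleman estimate}, with $\psi:=-\Psi$ (so $\psi<0$) playing the role of the negative weight there. The inequality $2\langle L_s^+w,L_s^-w\rangle\le\|L_sw\|^2=\|e^{-s\Phi}h\|^2$ will then deliver the estimate once the distributed part of $\langle L_s^+w,L_s^-w\rangle$ is bounded below.

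Next I would expand $\langle L_s^+w,L_s^-w\rangle$ by the same integrations by parts as in Lemma \ref{lemma1}; these are now fully justified because the uniformly parabolic Dirichlet problem \eqref{eq-z*} has, by standard regularity, a solution $z\in L^2(0,T;H^2(A,B))\cap H^1(0,T;L^2(A,B))$. The decisive feature is the choice $\zeta'(x)=-1/\sqrt{a(x)}$, which yields the clean identities $a(\psi')^2=r^2e^{2r\zeta}$ and $2a\psi''+a'\psi'=2r^2e^{r\zeta}$, whence $(2a\psi''+a'\psi')a(\psi')^2=2r^4e^{3r\zeta}$ and, for the gradient coefficient, $2a^2\psi''+aa'\psi'=2ar^2e^{r\zeta}$. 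Substituting these into the distributed terms produces exactly $s\int\Theta\,2ar^2e^{r\zeta}(w_x)^2$ and $s^3\int\Theta^3\,2r^4e^{3r\zeta}w^2$, which, since $a$ is bounded below, dominate the two positive quantities on the left of \eqref{carcorretta}.

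The main obstacle is, as always, absorbing the remaining lower-order terms --- those carrying $\ddot\Theta\psi$, $\Theta\dot\Theta a(\psi')^2$ and the mixed $\Theta\dot\Theta$ factors --- into these two leading integrals. Here the situation is much easier than in the degenerate proof: because $a$ is bounded above and below on $[A,B]$, no Hardy--Poincar\'e inequality is needed, and the contamination is controlled purely by the time weight via $|\dot\Theta|\le c\Theta^{5/4}$ and $|\ddot\Theta|\le c\Theta^{3/2}$, together with the boundedness of the space weight. The key point is to fix $r$ large \emph{first}, so that the factors $r^2$ and $r^4$ multiplying $e^{r\zeta}$ and $e^{3r\zeta}$ dominate every term in which $r$ enters to a lower power --- this is precisely the pseudoconvexity built into the exponential weight $e^{r\zeta}$ --- and only then take $s\ge s_0$ large to absorb the time-derivative contributions. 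I would then handle the boundary terms as in Lemma \ref{lemma4}: since $w(t,A)=w(t,B)=0$, every boundary contribution containing a factor of $w$ vanishes, and the only surviving term is the one proportional to $\Theta a^2(w_x)^2\psi'$, which, upon rewriting $\psi'=-re^{r\zeta}/\sqrt a$ and $e^{-2s\Phi}|z_x|^2=|w_x|^2$ at $x=A,B$, is exactly the term $-c\int_0^T[\sigma e^{-2s\Phi}|z_x|^2]_{x=A}^{x=B}dt$ with $\sigma=rs\Theta e^{r\zeta}$.

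Finally, I would convert $w$ back to $z$. From $z_x=e^{s\Phi}(w_x+s\Phi_xw)$ and the Cauchy--Schwarz inequality, the genuine gradient integral $\int s\Theta e^{r\zeta}(z_x)^2e^{-2s\Phi}$ is bounded by $\int s\Theta e^{r\zeta}(w_x)^2$ plus the term $\int s^3\Theta^3(\Psi')^2e^{r\zeta}w^2=\int s^3\Theta^3 r^2a^{-1}e^{3r\zeta}w^2$, which is already controlled by the zeroth-order leading integral since $a$ is bounded below and $r$ there enters only to the power $2<4$. Collecting the bounds and recalling $2\langle L_s^+w,L_s^-w\rangle\le\|e^{-s\Phi}h\|^2$ then yields \eqref{carcorretta}. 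As a conceptual check on the choice of weight, one may observe that the substitution $\eta=\zeta(x)$ turns $(az_x)_x$ into $z_{\eta\eta}+b(\eta)z_\eta$ with $b$ bounded, reducing the problem to the standard uniformly parabolic Carleman estimate with the canonical exponential weight $e^{r\eta}$; the first-order term is harmless, which is why the above constants depend only on $a$ through its bounds and those of $a'$.
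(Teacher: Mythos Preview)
Your argument is correct and is exactly the standard Fursikov--Imanuvilov conjugation proof; in fact the paper does not prove this proposition at all but simply quotes it from \cite[Proposition~4.4]{acf}, so there is no ``paper's own proof'' to compare against. Your choice of weight, the identities $a(\psi')^2=r^2e^{2r\zeta}$ and $2a\psi''+a'\psi'=2r^2e^{r\zeta}$, the absorption of the lower-order terms by first fixing $r$ large and then $s$, and the identification of the surviving boundary term all follow the same template the paper itself uses in Section~\ref{Carleman estimate} for the degenerate weight, only simplified by uniform ellipticity; this is precisely the approach of \cite{acf}.
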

(Observe that $\Phi
>0$ and $\Phi(t,x) \rightarrow + \infty$, as $ t\downarrow 0,\, t
\uparrow T$.)

\begin{proof}[Proof of Lemma $\ref{lemma3}$]
By assumption, we can find two subintervals $\omega_1\subset (0,
x_0), \omega_2 \subset (x_0,1)$ such that $(\omega_1 \cup \omega_2)
\subset \subset \omega \setminus \{x_0\}$. Now, set $\lambda_i:=
\inf \omega_i$ and $\beta_i:= \sup \omega_i$, $i=1,2$ and consider a
smooth function $\xi: [0,1] \to \Bbb R$ such that
\[\begin{cases}
    0 \leq \xi (x)  \leq 1, &  \text{ for all } x \in [0,1], \\
    \xi (x) = 1 ,  &   x \in [\lambda_1, \beta_2],\\
    \xi (x)=0, &     x \in [0,1]\setminus \omega.
    \end{cases}\]
    Define $w:= \xi v$, where $v$ is the solution of \eqref{h=01}.
   Hence, $w$   satisfies
    \begin{equation}
    \label{eq-w*}
    \begin{cases}
      w_t + (a  w_x) _x =( a \xi _x v )_x + \xi _x a v_x =:f,&
      (t,x) \in(0, T)\times (0,1), \\
    w(t,0)= w(t,1)=0, & t \in (0,T).
    \end{cases}
\end{equation}
Applying Theorem \ref{Carleman} and using the fact that $w=0$ in a
neighborhood of $x=0$  and $x=1$, we have
\begin{equation}\label{car9}
 \int_0^T \int_0^1\Big( s \Theta  a (w_x)^2 + s^3 \Theta^3
   \frac{(x-x_0)^2}{a} w^2 \Big)
    e^{2s \varphi} \, dx dt
       \le C \int_0^T \int_0^1e^{2s \varphi} f^2  dxdt
\end{equation}
for all $s \ge s_0$. Then, using the definition of $\xi$  and in
particular the fact that  $\xi_x$ and  $\xi_{xx}$ are supported in
$\tilde \omega$, where  $\tilde \omega:= [\inf \omega, \lambda_1]
\cup[ \beta_2, \sup\omega]$, we can write
\[
f^2= (( a \xi _x v )_x + \xi _x a v_x)^2 \le C( v^2+
(v_x)^2)\chi_{\tilde \omega},
\]
since the function $a'$  is bounded on $\tilde \omega$. Hence,
applying Proposition \ref{caccio} and \eqref{car9}, we get
 \begin{equation}\label{stimacar}
\begin{aligned}
&\int_0^T\int_{\lambda_1}^{\beta_2}\left( s \Theta a (v_x)^{2} + s^3
\Theta ^3 \frac{(x-x_0)^2}{a} v^{2}\right) e^{{2s\varphi}} dxdt
\\&=\int_0^T\int_{\lambda_1}^{\beta_2}\Big( s \Theta  a (w_x)^2 + s^3
\Theta^3
   \frac{(x-x_0)^2}{a} w^2 \Big)
    e^{2s \varphi} \, dx dt\\
&\le \int_0^T \int_0^1\Big( s \Theta  a (w_x)^2 + s^3 \Theta^3
   \frac{(x-x_0)^2}{a} w^2 \Big)
    e^{2s \varphi} \, dx dt
     \\ & \le C  \int_0^T \int_{\tilde \omega}e^{2s \varphi}(
v^2+ (v_x)^2)dxdt \le C \int_0^T \int_{\omega} v^2dxdt,
\end{aligned} \end{equation} for a positive constant $C$.
Now, consider a smooth function $\eta: [0,1] \to \Bbb R$ such that
  \[\begin{cases}
    0 \leq \eta (x)  \leq 1, &  \text{ for all } x \in [0,1], \\
    \eta (x) = 1 ,  &   x \in [\beta_2 , 1],\\
    \eta (x)=0, &     x \in \left[0,\frac{\lambda_2+ 2 \beta_2}{3}\right ].
    \end{cases}\]
    Define $z:= \eta v$, where $v$ is the solution of \eqref{h=01}.
    Then $z$ satisfies \eqref{eq-z*} and
\eqref{carcorretta}, with $h:=( a \eta _x v)_x + \eta _x a v_x$, $A=
\lambda_2$ and $B=1$. Since $h$ is supported in
$\left[\frac{\lambda_2+ 2 \beta_2}{3}, \beta_2\right]$, by
Propositions \ref{caccio} and \ref{classical Carleman} with
\begin{equation}\label{zeta1}
\zeta(x)= \zeta_1(x):=\displaystyle \int_x^1\frac{1}{\sqrt{a(y)}}dy,
\end{equation}
we get
\begin{equation}\label{lun}
\begin{aligned}
&\int_0^T\int_{\lambda_2}^1 s \Theta e^{r\zeta_1 }(z_x)^2e^{-2s\Phi}
dxdt +\int_0^T\int_{\lambda_2}^1 s^3\Theta^3 e
^{3r\zeta_1}z^2e^{-2s\Phi} dxdt\\& \le c
\int_0^T\int_{\lambda_2}^1e^{-2s\Phi} h^2 dxdt\le C \int_0^T
\int_{\tilde \omega_1} v^2dxdt + C \int_0^T \int_{\tilde
\omega_1}e^{-2s\Phi}(v_x)^2dxdt\\&\le C \int_0^T \int_{\omega}
v^2dxdt,
\end{aligned}
\end{equation}
where $\tilde \omega_1 =(\lambda_2, \beta_2)$.

Now, choose the constant $c_1$ in \eqref{c_1} so that
\[
c_1 \ge \max\left\{\frac{
e^{2r\zeta_1(\lambda_2)}-1}{c_2-\frac{(1-x_0)^2}{a(1)(2-K)}}, \frac{
e^{2r\zeta_1(\lambda_2)}-1}{c_2-\frac{x_0^2}{a(0)(2-K)}} \right\}
\]
where $\zeta_1$ is defined as before. Then, by definition of
$\varphi$, the choice of $c_1$ and by Lemma \ref{rem}, one can prove
that there exists a positive constant $k$, for example
\[k = \max \left\{\max_{\left[\lambda_2,
1\right]}a,\frac{(1-x_0)^2}{a(1)}\right\},\] such that
\[
a(x) e^{2s\varphi(t,x)} \le k e^{r\zeta_1 (x) }e^{-2s\Phi(t,x)}
\]and
\[\frac{(x-x_0)^2}{a(x)}e^{2s\varphi(t,x)} \le k e^{r\zeta_1(x)} e^{-2s
\Phi(t,x)} \le k e^{3r\zeta_1(x)} e^{-2s \Phi(t,x)}
\]
for every $(t,x) \in [0, T] \times \left[\lambda_2, 1\right]$. Thus,
by \eqref{lun}, one has
\[
\begin{aligned}
&\int_0^T\int_{\lambda_2}^1 \Big(s \Theta a (z_x)^2
 + s^3 \Theta^3
      \frac{(x-x_0)^2}{a}z^2\Big) e^{2s\varphi}dxdt \\&
\le k \int_0^T\int_{\lambda_2}^1 s \Theta e^{r\zeta_1
}(z_x)^2e^{-2s\Phi} dxdt +k\int_0^T\int_{\lambda_2}^1 s^3\Theta^3 e
^{3r\zeta_1}z^2e^{-2s\Phi} dxdt\\&
      \le kC \int_0^T
\int_{\omega} v^2dxdt,
\end{aligned}
\]
for a positive constant $C$.
As a trivial consequence,
\begin{equation}\label{stimacar2}
\begin{aligned}
&\int_0^T\int_{\beta_2}^1 \Big(s \Theta a (v_x)^2
 + s^3 \Theta^3
      \frac{(x-x_0)^2}{a}v^2\Big) e^{2s\varphi} dxdt\\&= \int_0^T\int_{\beta_2}^1 \Big(s
\Theta a (z_x)^2
 + s^3 \Theta^3
      \frac{(x-x_0)^2}{a}z^2\Big) e^{2s\varphi}dxdt\\&
      \le \int_0^T\int_{\lambda_2}^1 \Big(s \Theta a (z_x)^2
 + s^3 \Theta^3
      \frac{(x-x_0)^2}{a}z^2\Big) e^{2s\varphi}dxdt
\\&\le kC \int_0^T
\int_{\omega} v^2dxdt,
\end{aligned}
\end{equation} for a positive constant $C$.

Thus \eqref{stimacar} and \eqref{stimacar2} imply
    \begin{equation}\label{carin0}
    \begin{aligned}
      \int_{0}^T \int _{\lambda_1}^{1}  \Big( s \Theta  a (v_x)^2 + s^3 \Theta^3 \frac{(x-x_0)^2}{a}  v^2 \Big)
      e^{2s \varphi } \, dx dt
     \le  C \int_{0}^T \int _{\omega}   v^2  dxdt,
    \end{aligned}
\end{equation}
for some positive constant $C$. To complete the proof it is
sufficient to prove a similar inequality on the interval
$[0,\lambda_1]$. To this aim, we follow a reflection procedure
introducing the functions
\begin{equation}\label{W}
W(t,x):= \begin{cases} v(t,x), & x \in [0,1],\\
-v(t,-x), & x \in [-1,0],
\end{cases}
\end{equation}
where $v$ solves \eqref{h=01}, and
\begin{equation}\label{tildea}
\tilde a(x):= \begin{cases} a(x), & x \in [0,1],\\
a(-x), & x \in [-1,0].
\end{cases}
\end{equation}
 Then $W$ satisfies the problem
\begin{equation}\label{dispari}
\begin{cases}
W_t +(\tilde a W_x)_{x}= 0, &(t,x) \in  (0,T)\times (-1,1),
\\[5pt]
W(t,-1)=W(t,1) =0, & t \in (0,T).
\end{cases}
\end{equation}
Now, consider a cut off function $\rho: [-1,1] \to \Bbb R$ such that
\[
\begin{cases}
    0 \leq \rho (x)  \leq 1, &  \text{ for all } x \in [-1,1], \\
    \rho (x) = 1 ,  &   x \in (-\lambda_1, \lambda_1),\\
    \rho (x)=0, &     x \in \left[-1,-\frac{\lambda_1+ 2\beta_1}{3}\right ]\cup \left[\frac{\lambda_1+ 2\beta_1}{3},1\right].
\end{cases}\]
Define $Z:= \rho W$, where $W$ is the solution of \eqref{dispari}.
Then $Z$ satisfies \eqref{eq-z*} and \eqref{carcorretta}, with $h:=(
\tilde a \rho_x W)_x + \rho _x \tilde a W_x$, $A= -\beta_1$ and
$B=\beta_1$. Now define
$$
\zeta(x)=\zeta_2(x):=\int_x^{\beta_1}\frac{1}{\sqrt{\tilde a(y)}}dy,
$$
Using Proposition \ref{classical Carleman} with
\begin{equation}\label{phi}
\tilde \Phi (t,x):= \Theta(t)(e^{2r\zeta_2(-\beta_1)} -
e^{r\zeta_2(x)}),
\end{equation}
the fact that $Z_x(t, -\beta_1)=Z_x(t, \beta_1)=0$, the definition
of $W$ and the fact that $\rho$ is supported in
$\left[-\frac{\lambda_1+ 2\beta_1}{3},-\lambda_1\right]
\cup\left[\lambda_1, \frac{\lambda_1+ 2\beta_1}{3}\right]$, give
\begin{equation}\label{lunga}
\begin{aligned}
&\int_0^T\int_{-\beta_1}^{\beta_1} s \Theta
e^{r\zeta_2}(Z_x)^2e^{-2s\tilde\Phi} dxdt
+\int_0^T\int_{-\beta_1}^{\beta_1} s^3\Theta^3 e
^{3r\zeta_2}Z^2e^{-2s\tilde\Phi} dxdt \\
& \le C
\int_0^T\int_{-\beta_1}^{\beta_1}e^{-2s\tilde\Phi} h^2 dxdt \\
&\le C \int_0^T \int_{-\frac{\lambda_1+
2\beta_1}{3}}^{-\lambda_1}e^{-2s\tilde\Phi}( W^2+ (W_x)^2)dxdt \\&+
C\int_0^T \int_{\lambda_1}^{\frac{\lambda_1+
2\beta_1}{3}}e^{-2s\tilde\Phi}( W^2+ (W_x)^2)dxdt.
\end{aligned}
\end{equation}
Now, putting $\Xi(x):= e^{2r\zeta_2(-\beta_1)} - e^{r\zeta_2(x)}$
and
\[
A:=\frac{\Xi(-\beta_1)}{\Xi(\beta_1)}=\frac{e^{2r\zeta_2(-\beta_1)}-e^{r
\zeta_2(-\beta_1)}}{e^{2r\zeta_2(-\beta_1)}-1}\in(0,1),
\]
we note that for any $x\in[0,\beta_1]$, $s\geq s_0$ and
$t\in(0,T)$ we have
\[
e^{-2s\Theta(t)\Xi(-x)}\leq e^{-2As\Theta (t)\Xi(x)}.
\]
Hence, using the oddness of the involved functions,
\begin{equation}\label{lunga2}
\begin{aligned}
&\int_0^T \int_{-\frac{\lambda_1+
2\beta_1}{3}}^{-\lambda_1}e^{-2s\tilde\Phi}( W^2+ (W_x)^2)dxdt \leq
\int_0^T \int^{\frac{\lambda_1+ 2\beta_1}{3}}_{\lambda_1}
e^{-2As\tilde \Phi}( W^2+ (W_x)^2)dxdt\\
& \leq \int_0^T
\int_{\lambda_1}^{\frac{\lambda_1+ 2\beta_1}{3}} v^2dxdt+  C
\int_0^T \int_{\lambda_1}^{\frac{\lambda_1+
2\beta_1}{3}}e^{-2As\theta \Xi}(v_x)^2dxdt\\
& \le  \int_0^T \int_{\omega} v^2dxdt+ C \int_0^T
\int_{\lambda_1}^{\frac{\lambda_1+ 2\beta_1}{3}}e^{-2As\theta
\Xi}(v_x)^2dxdt,
\end{aligned}
\end{equation}
for some positive constant $C$. Now, after relabeling $\tilde s=As$,
\eqref{lunga2} and Proposition \ref{caccio} imply the existence of
$C>0$ and $s_1>0$ such that for all $s\geq s_1$ we get
\begin{equation}\label{lunga3}
\int_0^T \int_{-\frac{\lambda_1+
2\beta_1}{3}}^{-\lambda_1}e^{-2s\tilde\Phi}( W^2+ (W_x)^2)dxdt \leq
C\int_0^T \int_{\omega} v^2dxdt.
\end{equation}

On the other hand, Proposition \ref{caccio} immediately implies in
an easier way that
\begin{equation}\label{lunga4}
\int_0^T \int_{\lambda_1}^{\frac{\lambda_1+
2\beta_1}{3}}e^{-2s\tilde\Phi}( W^2+ (W_x)^2)dxdt\leq C\int_0^T
\int_{\omega} v^2dxdt
\end{equation}
for all $s$ large enough and for a suitable $C>0$.

In conclusion, \eqref{lunga}--\eqref{lemma4} imply that there exists
$s_0$ and $C>0$ such that
\begin{equation}\label{dopolunga4}
\begin{aligned}
\int_0^T\int_{-\beta_1}^{\beta_1} s \Theta
e^{r\zeta_2}(Z_x)^2e^{-2s\tilde\Phi} dxdt
&+\int_0^T\int_{-\beta_1}^{\beta_1} s^3\Theta^3 e
^{3r\zeta_2}Z^2e^{-2s\tilde\Phi} dxdt\\&\leq C\int_0^T \int_{\omega}
v^2dxdt
\end{aligned}
\end{equation}
for all $s\geq s_0$.


Now, define
\[
\tilde \varphi(t,x) := \Theta(t) \tilde \psi (x),
\]
where
\begin{equation}\label{tildepsi}
\tilde \psi(x) := \begin{cases}
\psi(x), & x \ge 0,\\
\displaystyle \psi(-x)= c_1\left[\int_{-x_0}^x \frac{t+x_0}{\tilde
a(t)}dt -c_2\right], & x <0.\end{cases}
\end{equation}
and choose the constant $c_1$ so that
\[
c_1 \ge \max\left\{\frac{
e^{2r\zeta_1(\lambda_2)}-1}{c_2-\frac{(1-x_0)^2}{a(1)(2-K)}}, \frac{
e^{2r\zeta_1(\lambda_2)}-1}{c_2-\frac{x_0^2}{a(0)(2-K)}}, \frac{
e^{2r\zeta_2(-\beta_1)}-1}{c_2-\frac{(1-x_0)^2}{a(1)(2-K)}}, \frac{
e^{2r\zeta_2(-\beta_1)}-1}{c_2-\frac{x_0^2}{a(0)(2-K)}} \right\}.
\]
Thus, by definition of $\tilde \varphi$, one can prove as before
that there exists a positive constant $k$, for example
\[
k = \max \left\{\max_{\left[-\beta_1, \beta_1\right]}\tilde
a,\frac{x_0^2}{ a(0)}\right\},
\]
such that
\[
\tilde a(x) e^{2s\tilde\varphi(t,x)} \le k e^{r\zeta_2 (x)
}e^{-2s\tilde\Phi(t,x)}
\]and
\[
\frac{(x-x_0)^2}{\tilde a(x)}e^{2s\tilde\varphi(t,x)} \le k e^{r\zeta_2(x)}
e^{-2s \tilde\Phi(t,x)} \le k e^{3r\zeta_2(x)} e^{-2s\tilde
\Phi(t,x)}
\]
for every $(t,x) \,\in \,[0, T] \times \left[-\beta_1,
\beta_1\right]$. Thus, by \eqref{dopolunga4}, one has
 \begin{equation}\label{stimacar20}
\begin{aligned}
& \int_0^T\int_{-\beta_1}^{\beta_1} \Big(s \Theta \tilde a (Z_x)^2+
s^3 \Theta^3  \frac{(x-x_0)^2}{\tilde a}Z^2\Big)
e^{2s\tilde\varphi}dxdt\\
& \le k\int_0^T\int_{-\beta_1}^{\beta_1} s
\Theta e^{r\zeta_2 }(Z_x)^2e^{-2s\tilde\Phi} dxdt +
k\int_0^T\int_{-\beta_1}^{\beta_1}
s^3\Theta^3 e ^{3r\zeta_2}Z^2e^{-2s\tilde\Phi} dxdt \\
&\le kC \int_0^T \int_{\omega}v^2 dxdt.
\end{aligned}
\end{equation}
Hence, by \eqref{stimacar20} and the definition of $W$ and $Z$, we
get
\begin{equation}\label{car10}
\begin{aligned}
&\int_0^T\int_0^{\lambda_1}  \Big( s^3 \Theta^3
      \frac{(x-x_0)^2}{a}v^2+s \Theta a (v_x)^2\Big) e^{2s\varphi}dxdt\\
&= \int_0^T\int_0^{\lambda_1}  \Big( s^3 \Theta^3
      \frac{(x-x_0)^2}{a}W^2+s \Theta a (W_x)^2\Big) e^{2s\varphi} dxdt \\
&\le \int_0^T\int_{-\lambda_1}^{\lambda_1} \Big( s^3 \Theta^3
      \frac{(x-x_0)^2}{\tilde a}W^2+s \Theta \tilde a (W_x)^2\Big) e^{2s\tilde\varphi} dxdt \\&=
\int_0^T\int_{-\lambda_1}^{\lambda_1} \Big( s^3 \Theta^3
      \frac{(x-x_0)^2}{\tilde a}Z^2+s \Theta \tilde a (Z_x)^2\Big) e^{2s\tilde\varphi}dxdt \\
      &
\le \int_0^T\int_{-\beta_1}^{\beta_1} \Big( s^3 \Theta^3
      \frac{(x-x_0)^2}{\tilde a}Z^2+s \Theta \tilde a (Z_x)^2\Big) e^{2s\tilde\varphi}dxdt
      \\
&\le C \int_0^T \int_{\omega} v^2dxdt,
\end{aligned}
\end{equation}
for a positive constant $C$.

Therefore, by \eqref{carin0} and \eqref{car10}, Lemma \ref{lemma3}
follows.
\end{proof}

We shall also use the following
\begin{Lemma}\label{obser.regular}
Assume Hypothesis $\ref{Ass02}$ and \eqref{omega1}. Then there
exists a positive constant $C_T$ such that every solution $v \in
\cal W$ of \eqref{h=01} satisfies
\[
\int_0^1v^2(0,x) dx \le C_T\int_0^T \int_{\omega}v^2(t,x)dxdt.
\]
\end{Lemma}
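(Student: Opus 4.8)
The plan is to combine a dissipation (energy) estimate for the adjoint problem with the Carleman inequality of Lemma~\ref{lemma3}, restricted to a time interval on which the weight $\Theta$ is bounded above and below by positive constants.

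First I would record that, for $v\in\mathcal W$, the map $t\mapsto \int_0^1 v^2(t,x)\,dx$ is nondecreasing on $[0,T]$. Indeed, multiplying $v_t+(av_x)_x=0$ by $v$ and integrating by parts over $(0,1)$ — which is legitimate because $v(t,\cdot)\in H^2_a(0,1)=D({\cal A})$, so that $\int_0^1(av_x)_x v\,dx=-\int_0^1 a(v_x)^2\,dx$ by Proposition~\ref{domain} (the boundary terms at $x=0,1$ vanish by the Dirichlet conditions, and those at $x_0$ vanish since $(av)(x_0)=(av_x)(x_0)=0$) — one gets $\frac{d}{dt}\int_0^1 v^2\,dx=2\int_0^1 a(v_x)^2\,dx\ge 0$. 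Hence $\int_0^1 v^2(0,x)\,dx\le\int_0^1 v^2(t,x)\,dx$ for every $t\in[0,T]$, and integrating this in $t$ over $[T/4,3T/4]$ yields
\[
\int_0^1 v^2(0,x)\,dx\le \frac{2}{T}\int_{T/4}^{3T/4}\int_0^1 v^2(t,x)\,dx\,dt.
\]

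It then remains to bound $\int_{T/4}^{3T/4}\int_0^1 v^2$ by $C_T\int_0^T\int_\omega v^2$. Since $x_0\in\omega=(\alpha,\beta)$, I would fix an open interval $\omega_0=(\alpha',\beta')$ with $x_0\in\omega_0\subset\subset\omega$ and split $\int_0^1=\int_{\omega_0}+\int_{[0,1]\setminus\omega_0}$. The first piece is harmless: $\int_{T/4}^{3T/4}\int_{\omega_0} v^2\le\int_0^T\int_\omega v^2$. For the second piece, note that on the compact set $[0,1]\setminus\omega_0$, which avoids $x_0$, the function $a$ is continuous and $(x-x_0)^2$ is bounded below by a positive constant, so $\frac{(x-x_0)^2}{a}\ge\mu>0$ there; moreover for $t\in[T/4,3T/4]$ the weight $\Theta(t)$ is bounded between two positive constants and, since $\psi$ is bounded, $e^{2s\varphi(t,x)}\ge\nu>0$. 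Fixing $s=s_0$ in Lemma~\ref{lemma3}, this gives
\[
\int_{T/4}^{3T/4}\int_{[0,1]\setminus\omega_0} v^2\,dx\,dt\le C\int_0^T\int_0^1 s_0^3\Theta^3\frac{(x-x_0)^2}{a}v^2 e^{2s_0\varphi}\,dx\,dt\le C\int_0^T\int_\omega v^2\,dx\,dt .
\]
Adding the two contributions and substituting into the displayed bound for $\int_0^1 v^2(0,x)\,dx$ proves the lemma, with $C_T$ absorbing $2/T$, the constant of Lemma~\ref{lemma3}, and $1/\mu,1/\nu$ (all depending on $T$ through $s_0$ and through the choice of time interval).

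The only genuinely delicate point is that the Carleman weight $\frac{(x-x_0)^2}{a}$ degenerates precisely at $x_0$, so it cannot by itself control $\int v^2$ on a full neighbourhood of the degeneracy point; this is exactly where the hypothesis $x_0\in\omega$ is used, allowing us to discard a neighbourhood of $x_0$ at no cost. A secondary technical point is the justification of the integration by parts in the energy identity at the interior point $x_0$, which is however already encoded in the domain characterisation of Proposition~\ref{domain} (equivalently, in the self-adjointness of ${\cal A}$ established in \cite{fggr}).
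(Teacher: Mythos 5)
Your proof is correct, but it follows a genuinely different route from the paper's. The paper multiplies the equation by $v_t$ to show that $t\mapsto\int_0^1 a(v_x)^2\,dx$ is nondecreasing, bounds $\int_0^1 a(v_x)^2(0,x)\,dx$ via the \emph{gradient} term $s\Theta a(v_x)^2e^{2s\varphi}$ of Lemma \ref{lemma3} on $[T/4,3T/4]$, and then converts this into a bound on $\int_0^1 v^2(0,x)\,dx$ by a second application of the Hardy--Poincar\'e inequality with $p(x)=(a(x)|x-x_0|^4)^{1/3}$, using that $(a/(x-x_0)^2)^{1/3}$ is bounded below (Lemma \ref{rem}). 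You instead multiply by $v$ to get monotonicity of $t\mapsto\int_0^1 v^2\,dx$ directly, and then exploit the \emph{zeroth-order} term $s^3\Theta^3\frac{(x-x_0)^2}{a}v^2e^{2s\varphi}$ of Lemma \ref{lemma3} away from $x_0$, discarding a neighbourhood of $x_0$ at no cost because $x_0\in\omega$. Your route avoids the final Hardy--Poincar\'e step entirely and is in that sense more elementary; the price is the extra spatial splitting around $x_0$, which the paper's weight $\left(\frac{a}{(x-x_0)^2}\right)^{1/3}$ handles globally in one stroke. Both arguments rest on the same two pillars (a monotone energy quantity plus Lemma \ref{lemma3} on a time interval where $\Theta$ and $e^{2s\varphi}$ are bounded below), and your justification of the integration by parts across $x_0$ via Proposition \ref{domain} is the right reference --- note only that the identity $\int_0^1(av_x)_xv\,dx=-\int_0^1 a(v_x)^2\,dx$ is what is actually proved there (the pointwise vanishing of $avv_x$ at $x_0$ is established in the course of that proof, not as a formal consequence of $(av)(x_0)=(av_x)(x_0)=0$ alone).
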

\begin{proof}
Multiplying the equation of \eqref{h=01} by $v_t$ and integrating by
parts over $(0,1)$, one has
\[
\begin{aligned}
&0 = \int_0^1(v_t+ (av_x)_x)v_t dx= \int_0^1 (v_t^2+ (av_x)_xv_t )dx
= \int_0^1v_t^2dx + \left[av_xv_t \right]_{x=0}^{x=1} \\&-
\int_0^1av_xv_{tx} dx= \int_0^1v_t^2dx -
\frac{1}{2}\frac{d}{dt}\int_0^1a(v_x)^2
 \ge - \frac{1}{2}
\frac{d}{dt}\int_0^1 a(v_x)^2dx.
\end{aligned}
\]
Thus, the function $t \mapsto \int_0^1 a(v_x)^2 dx$ is increasing
for all $t \in [0,T]$. In particular, $\int_0^1 av_x(0,x)^2dx \le
\int_0^1av_x(t,x)^2dx$. Integrating the last inequality over
$\left[\frac{T}{4}, \frac{3T}{4} \right]$, $\Theta$ being bounded
therein, we find
\[
\begin{aligned}
\int_0^1a(v_x)^2(0,x) dx &\le
\frac{2}{T}\int_{\frac{T}{4}}^{\frac{3T}{4}}\int_0^1a(v_x)^2(t,x)dxdt\\&\le
C_T \int_{\frac{T}{4}}^{\frac{3T}{4}}\int_0^1s\Theta
a(v_x)^2(t,x)e^{2s\varphi}dxdt.
\end{aligned}
\]
Hence, by Lemma \ref{lemma3} and the previous inequality, there
exists a positive constant $C$ such that
\begin{equation}\label{stum}
\int_0^1a (v_x)^2(0,x) dx \le C \int_0^T \int_{\omega}v^2dxdt.
\end{equation}

Proceeding again as in the proof of Lemma \ref{lemma2} and applying
the Hardy- Poincar\'{e} inequality, by \eqref{stum}, one has
\[
\begin{aligned}
\int_0^1 \left(\frac{a}{(x-x_0)^2}\right)^{1/3}v^2(0,x)dx &=
\int_0^1 \frac{p}{(x-x_0)^2} v^2(0,x)dx  \\&\le C_{HP} \int_0^1
p(v_x)^2(0,x) dx \\&\le C_1C_{HP} \int_0^1a(v_x)^2(0,x) dx \le C
\int_0^T\int_{\omega}v^2dxdt,
\end{aligned}
\]
for a positive constant $C$.  Here $p(x) = (a(x)|x-x_0|^4)^{1/3}$,
$C_{HP}$ is the Hardy-Poincar\'{e} constant and $C_1:=
\max\left\{\displaystyle\left(\frac{x_0^2}{a(0)}\right)^{2/3},\displaystyle\left(\frac{(1-x_0)^2}{a(1)}\right)^{2/3}\right\}$,
as before.

By Lemma \ref{rem}, $\displaystyle \frac{a(x)}{(x-x_0)^2}$ is
nondecreasing on $[0, x_0)$ and nonincreasing on $(x_0,1]$, then
\[\left(\frac{a(x)}{(x-x_0)^2}\right)^{1/3}\ge C_2:=\min\left\{\left(\frac{a(1)}{(1-x_0)^2}\right)^{1/3},
\left(\frac{a(0)}{x_0^2}\right)^{1/3}\right\} >0.
\]
Hence
\[
C_2\int_0^1v(0,x)^2dx \le C \int_0^T\int_{\omega}v^2dxdt\] and the
thesis follows.
\end{proof}

\begin{proof}[Proof of Proposition $\ref{obser.}$] The proof is now standard, but we give it with some precise references: let $v_T \in L^2(0,1)$
and let $v$ be the solution of \eqref{h=0} associated to $v_T$.
Since $D({\cal A}^2)$ is densely defined in $L^2(0,1)$, there exists
a sequence $(v_T^n)_{n}\subset D({\cal A}^2)$ which converges to
$v_T$ in $L^2(0,1)$. Now, consider the solution $v_n$ associated to
$v_T^n$.

As shown in Theorem \ref{th-parabolic}, the semigroup
generated by $\mathcal A$ is analytic, hence $\mathcal A$ is closed (for example, see
\cite[Theorem I.1.4]{en} ; thus, by \cite[Theorem II.6.7]{en}, we
get that $(v_n)_n$ converges to a certain $v$ in $C(0,T;
L^2(0,1))$, so that
\[
\lim_{n \rightarrow + \infty} \int_0^1v_n^2(0,x) dx =
\int_0^1v^2(0,x)dx,
\]
and also
\[
\lim_{n \rightarrow + \infty}\int_0^T \int_{\omega}v_n^2dxdt =
\int_0^T \int_{\omega}v^2dxdt.
\]
But, by
Lemma \ref{obser.regular} we know that
\[
\int_0^1v_n^2(0,x) dx \le C_T\int_0^T \int_{\omega}v_n^2dxdt.
\]
Thus Proposition \ref{obser.} is now proved.
\end{proof}

\section{Linear Extension}\label{sec5}

In this section we want to extend the observability inequality
proved in the previous section starting from linear complete
problems of the form
\begin{equation}\label{linear_c}
\begin{cases}
u_t - \left(a(x)u_x \right) _x + c(t,x)u =h(t,x) \chi_{\omega}(x), & (t,x) \in (0,T) \times (0,1),\\
u(t,1)=u(t,0)=0, & t \in (0,T),\\
u(0,x)=u_0(x),& x \in (0,1),
\end{cases}
\end{equation}
where $u_0 \in L^2(0,1)$,  $h \in L^2(Q_T)$, $c \in L^\infty(Q_T)$,
$\omega$ is as in \eqref{omega1} and $a$ satisfies Hypothesis
\ref{Ass02}. Observe that the well-posedness of \eqref{linear_c}
follows by \cite[Theorem 4.1]{fggr}. As for the previous case, we
shall prove an observability inequality for the solution of the
associated homogeneous adjoint problem
\begin{equation}\label{h=0_c}
\begin{cases}
v_t +(av_x)_x- cv= 0, &(t,x) \in  (0, T)\times (0, 1),\\
v(t,1)=v(t,0)=0, & t \in (0,T),\\
v(T)= v_T \in L^2(0,1).
\end{cases}
\end{equation}
To obtain an observability inequality for \eqref{h=0_c} like the one
in Proposition \ref{obser.}, we consider the problem
\begin{equation}\label{adjoint0_c}
\begin{cases}
v_t + \left(a(x)v_x \right) _x -cv =h, & (t,x) \in (0,T) \times (0,1),\\
v(t,1)=v(t,0)=0, &  t \in (0,T),
\end{cases}
\end{equation}
and we prove the following Carleman estimate as a corollary of
Theorem \ref{Cor1}:
\begin{Corollary}\label{cor_c}
Assume Hypothesis $\ref{Ass02}$ and let $T>0$. Then, there exist two
positive constants $C$ and $s_0$, such that every solution $v$ in $
\mathcal{V}$ of \eqref{adjoint0_c} satisfies, for all $s \ge s_0$,
\[
\begin{aligned}
&\int_0^T\int_0^1 \left(s\Theta a(v_x)^2 + s^3 \Theta^3
\frac{(x-x_0)^2}{a} v^2\right)e^{2s\varphi}dxdt\\
&\le C\left(\int_0^T\int_0^1 h^{2}e^{2s\varphi}dxdt+
sc_1\int_0^T\left[a\Theta e^{2s \varphi}(x-x_0)(v_x)^2
dt\right]_{x=0}^{x=1}\right),
\end{aligned}
\]
where $c_{1}$ is the constant introduced in \eqref{c_1}.
\end{Corollary}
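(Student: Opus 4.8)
The natural strategy is to treat the lower-order term $cv$ as part of the source and then reabsorb it. Writing the equation in \eqref{adjoint0_c} as $v_t+(av_x)_x=h+cv$ and applying Theorem \ref{Cor1} with right-hand side $h+cv$ gives, for every $s\ge s_0$, precisely the inequality of Theorem \ref{Cor1} with $|h|^2$ replaced by $|h+cv|^2$. Since $c\in L^\infty(Q_T)$, the elementary bound $|h+cv|^2\le 2h^2+2\|c\|_{L^\infty(Q_T)}^2\,v^2$ then reduces the corollary to showing that the term $2C\|c\|_{L^\infty(Q_T)}^2\int_0^T\!\int_0^1 v^2e^{2s\varphi}\,dxdt$ can be absorbed into half of the left-hand side of the Carleman estimate, after possibly enlarging $s_0$.

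For the absorption I would set $w:=e^{s\varphi}v$, which belongs to $\mathcal V$ for the same reason as in the proof of Theorem \ref{Cor1} (namely $\psi<0$), so that $\int_0^1 v^2e^{2s\varphi}\,dx=\int_0^1 w^2\,dx$, with $w(t,0)=w(t,1)=0$ and $w(t,\cdot)\in H^1_a(0,1)$ for a.e.\ $t$. The heart of the argument is then a chain of three elementary estimates. First, from $w_x=(v_x+s\varphi_x v)e^{s\varphi}$ and $s^2a\varphi_x^2=s^2\Theta^2c_1^2\,(x-x_0)^2/a$ one gets pointwise
\[
a(w_x)^2\le 2a(v_x)^2e^{2s\varphi}+2s^2\Theta^2c_1^2\,\frac{(x-x_0)^2}{a}\,v^2e^{2s\varphi},
\]
so the right-hand side is bounded by $(s\Theta)^{-1}\max\{2,2c_1^2\}$ times the integrand on the left-hand side of the Carleman estimate; since $\Theta\ge(4/T^2)^4>0$ on $(0,T)$, integrating over $Q_T$ yields $\int_0^T\!\int_0^1 a(w_x)^2\,dxdt\le (C'/s)\times(\text{left-hand side of the Carleman estimate})$ with $C'$ depending only on $c_1$ and $T$. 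Second, the Hardy--Poincar\'e inequality of Proposition \ref{HP}, applied at each fixed $t$ with $p(x)=(a(x)|x-x_0|^4)^{1/3}$ and $q=(4+\vartheta)/3\in(1,2)$ — whose hypotheses, and the bound $p\le C_1a$, were already verified in the proof of Lemma \ref{lemma2} — gives $\int_0^1\tfrac{a^{1/3}}{|x-x_0|^{2/3}}w^2\,dx\le C_{HP}C_1\int_0^1 a(w_x)^2\,dx$. Third, by Lemma \ref{rem} the map $x\mapsto(x-x_0)^2/a$ is nonincreasing on the left of $x_0$ and nondecreasing on the right, so $\big(a/(x-x_0)^2\big)^{1/3}\ge C_2:=\min\{(a(0)/x_0^2)^{1/3},(a(1)/(1-x_0)^2)^{1/3}\}>0$ on $[0,1]$, whence $\int_0^1 w^2\,dx\le C_2^{-1}\int_0^1\tfrac{a^{1/3}}{|x-x_0|^{2/3}}w^2\,dx$. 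Chaining the three estimates gives $\int_0^T\!\int_0^1 v^2e^{2s\varphi}\,dxdt\le (C_*/s)\times(\text{left-hand side})$ with $C_*$ independent of $s$; choosing $s_0$ so large that $2CC_*\|c\|_{L^\infty(Q_T)}^2/s\le\tfrac12$ for $s\ge s_0$ absorbs the unwanted term, while the contributions of $h$ and of the boundary term are merely multiplied by a fixed constant, giving the estimate of Corollary \ref{cor_c}.

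The only genuinely delicate point — the \emph{main obstacle} — is precisely this absorption. Near $x=x_0$ the coefficient $s^3\Theta^3(x-x_0)^2/a$ of $v^2$ on the left-hand side degenerates (since $(x-x_0)^2/a$ vanishes there), so $v^2e^{2s\varphi}$ cannot be dominated near $x_0$ by a single term of the left-hand side; instead one is forced to bring in the gradient term $s\Theta a(v_x)^2e^{2s\varphi}$ via the Hardy--Poincar\'e inequality, together with the weighted lower bound for $a/(x-x_0)^2$, and to use the uniform positivity of $\Theta$ on $(0,T)$ to produce the extra factor $s^{-1}$ that makes the absorption effective. As elsewhere in the paper, no information on $v$ at the interior point $x_0$ is used: the degeneracy of $a$ there is exactly compensated by the Hardy--Poincar\'e weight.
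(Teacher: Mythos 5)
Your proposal is correct and follows essentially the same route as the paper: rewrite the equation with $\bar h=h+cv$, apply Theorem \ref{Cor1}, and absorb $2\|c\|_{L^\infty(Q_T)}^2\int_0^T\int_0^1 v^2e^{2s\varphi}\,dxdt$ into the left-hand side for large $s$ via the Hardy--Poincar\'e inequality applied to $w=e^{s\varphi}v$ with the weight $p=(a|x-x_0|^4)^{1/3}$. The only (immaterial) difference is in the last link of the chain: you use the pointwise lower bound $(a/(x-x_0)^2)^{1/3}\ge C_2$ from Lemma \ref{rem} (as the paper itself does in Lemma \ref{obser.regular}), whereas the paper's proof of the corollary instead splits $\int_0^1 w^2\,dx$ by a Young-type interpolation as in \eqref{sopra}; both yield the same $O(1/s)$ gain that makes the absorption work.
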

\begin{proof}
     Rewrite the equation of
    \eqref{adjoint0_c} as $ v_t + (av_x)_x = \bar{h}, $ where $\bar{h}
    := h + cv$. Then, applying Theorem \ref{Cor1}, there exists
    two positive constants $C$ and $s_0 >0$, such that
    \begin{equation}\label{fati1_c}
    \begin{aligned}
  &\int_0^T\int_0^1 \left(s\Theta a(v_x)^2 + s^3 \Theta^3
\frac{(x-x_0)^2}{a} v^2\right)e^{2s\varphi}dxdt\\
&\le C\left(\int_0^T\int_0^1 \bar{h}^{2}e^{2s\varphi}dxdt+
sc_1\int_0^T\left[a\Theta e^{2s \varphi}(x-x_0)(v_x)^2
dt\right]_{x=0}^{x=1}\right)
    \end{aligned}
    \end{equation}
    for all $s \ge s_0$.
    Using the definition of $\bar{h}$,
    the term $\int_0^T\int_0^1|\bar{h}|^2
    e^{2s\varphi(t,x)}dxdt$ can be estimated in the following way
    \begin{equation}\label{4_c}
    \begin{aligned}
    \int_0^T\int_0^1 \bar{h}^2e^{2s\varphi}dxdt \le
    2\int_0^T\int_0^1 h^2e^{2s\varphi}dxdt
   +2\|c\|_{L^\infty(Q_T)}^2\int_0^T\int_0^1
    e^{2s\varphi}v^2dxdt.
    \end{aligned}
    \end{equation}
Applying the Hardy-Poincar\'e inequality (see Proposition \ref{HP})
to $w(t,x) := e^{s\varphi(t,x)} v(t,x)$ and proceeding as in
\eqref{sopra}, recalling that $0<\inf \Theta\leq \Theta\leq c
\Theta^2$, one has
    \[
    \begin{aligned}
    \int_0^1 e^{2s\varphi} v^2 dx &= \int_0^1
w^2 dx\le C \int_0^1 a (w_x)^2 dx + \frac{s}{2}\int_0^1
\frac{(x-x_0)^2}{a}w^2 dx\\&\leq C\Theta\int_0^1
ae^{2s\varphi}(v_x)^2 dx+ C\Theta^3s^2 \int_0^1
    e^{2s\varphi}v^2 \frac{(x-x_0)^2}{a}dx.
    \end{aligned}
    \]
    Using this last inequality in (\ref{4_c}), we have
    \begin{equation}\label{fati2_c}
    \begin{aligned}
    \int_0^T\int_0^1 \bar{h}^2e^{2s\varphi}dxdt &\le
    2\int_0^T\int_0^1 |h|^2e^{2s\varphi}dxdt\\&
    + \|c\|_{L^\infty(Q_T)}^2C\int_0^T\int_0^1 \Theta a e^{2s\varphi} (v_x)^2 dxdt \\&
    + \|c\|_{L^\infty(Q_T)}^2Cs^2\int_0^T\int_0^1
     \Theta^3 e^{2s\varphi}\frac{(x-x_0)^2}{a}v^2dxdt,
    \end{aligned}
    \end{equation}
    for a positive constant $C$.
    Using this inequality
    in (\ref{fati1_c}), we obtain
    \[
    \begin{aligned}
    &\int_0^T\int_0^1 \left(s\Theta a(v_x)^2 + s^3 \Theta^3
\frac{(x-x_0)^2}{a} v^2\right)e^{2s\varphi}dxdt
    \le
    C\Big(2\int_0^T\int_0^1|h|^2 e^{2s\varphi}dxdt\\& +
   \int_0^T\int_0^1 \Theta ae^{2s\varphi}(v_x)^2dxdt
    +s^2\int_0^T\int_0^1 e^{2s\varphi}\Theta^3
   \frac{(x-x_0)^2}{a}v^2dxdt \\
   &+sc_1\int_0^T\left[a\Theta e^{2s \varphi}(x-x_0)(v_x)^2
dt\right]_{x=0}^{x=1}\Big).
    \end{aligned}
    \]
Hence, for all $s \ge s_0$, where $s_0$ is assumed sufficiently
large, the thesis follows.
\end{proof}

As a consequence of the previous corollary, one can deduce an
observability inequality for the adjoint problem \eqref{adjoint0_c}
\eqref{h=0_c}. In fact, without loss of generality we can assume
that $c \ge 0$ (otherwise one can reduce the problem to this case
introducing $\tilde v:= e^{-\lambda t}v$ for a suitable $\lambda$).
Using this assumption we can prove that the analogous of Lemma
\ref{lemma3} and of Lemma \ref{lemme-caccio} still hold true. Thus,
as before, one can prove the following observability inequality:
\begin{Proposition}\label{obser_c}
Assume Hypotheses $\ref{Ass02}$ and \eqref{omega1}. Then there
exists a positive constant $C$ such that every solution  $v \in
C([0, T]; L^2(0,1)) \cap L^2 (0,T; H^1_a(0,1))$  of \eqref{h=0_c}
satisfies
 \begin{equation}
\int_0^1v^2(0,x) dx \le C_T\int_0^T \int_{\omega}v^2(t,x)dxdt .
\end{equation}
\end{Proposition}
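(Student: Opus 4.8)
The plan is to retrace the argument of Section~\ref{sec4} line by line, with Theorem~\ref{Cor1} replaced by Corollary~\ref{cor_c} and the bounded potential absorbed at each step. As indicated in the text, I would first reduce to the case $c\ge 0$: putting $\tilde v:=e^{-\lambda t}v$ with $\lambda:=\inf_{Q_T}c$, one checks that $\tilde v$ solves a problem of the form \eqref{h=0_c} with potential $c-\lambda\ge 0$ and terminal datum $e^{-\lambda T}v_T$, and since $e^{-\lambda t}$ is bounded above and below on $[0,T]$, observability for $\tilde v$ is equivalent to observability for $v$. As in the proof of Proposition~\ref{obser.}, I would carry out all computations for regular solutions (those with terminal datum in a dense class such as $D(\mathcal{A}^2)$, which still belong to $\mathcal{V}$), so that the integrations by parts are legitimate, and only at the very end pass to general $v_T\in L^2(0,1)$ by density and continuous dependence of solutions on the data.

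The next task is to produce, with the potential present, the two auxiliary estimates used in Section~\ref{sec4}. For Caccioppoli's inequality (Proposition~\ref{caccio}) the only change is that, after differentiating $\int_0^1\xi^2 e^{2s\varphi}v^2\,dx$ and using $v_t=-(av_x)_x+cv$, one picks up the extra term $-2\int_0^T\int_\omega \xi^2 e^{2s\varphi}cv^2\,dxdt$, whose absolute value is bounded by $2\|c\|_{L^\infty}\int_0^T\int_\omega v^2\,dxdt$ since $e^{2s\varphi}\le 1$; everything else is unchanged. For the classical (non-degenerate) Carleman estimate of Proposition~\ref{classical Carleman}, I would rewrite $z_t+(az_x)_x-cz=h$ as $z_t+(az_x)_x=h+cz$, apply \eqref{carcorretta}, and absorb the resulting term, which is of the form (constant)$\times\|c\|_{L^\infty}^2\int_0^T\int_A^B e^{-2s\Phi}z^2\,dxdt$, into the left-hand side term $\int_0^T\int_A^B s^3\Theta^3 e^{3r\zeta}z^2 e^{-2s\Phi}\,dxdt$, which is possible for $s$ large because $\Theta$ and $e^{3r\zeta}$ are bounded below on their domains. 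With these two facts in hand, the proof of Lemma~\ref{lemma3} goes through with only notational changes, using Corollary~\ref{cor_c} in place of Theorem~\ref{Cor1}: indeed $w=\xi v$ now solves $w_t+(aw_x)_x-cw=f$ with $f$ still supported in $\tilde\omega$, hence away from $x_0$ and from $x=0,1$, so the boundary terms in Corollary~\ref{cor_c} vanish and $\int e^{2s\varphi}f^2\le C\int_\omega v^2$ by the modified Caccioppoli inequality; the part on $[\beta_2,1]$, and the part on $[0,\lambda_1]$ obtained after the reflection \eqref{W}--\eqref{dispari} (the reflected problem carrying the even, still bounded and nonnegative, reflected potential), are handled by the modified classical estimate exactly as in Lemma~\ref{lemma3}. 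This yields
\[
\int_0^T\int_0^1\Big(s\Theta a v_x^2+s^3\Theta^3\frac{(x-x_0)^2}{a}v^2\Big)e^{2s\varphi}\,dxdt\le C\int_0^T\int_\omega v^2\,dxdt
\]
for every regular solution $v$ of \eqref{h=0_c}.

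It then remains to run the argument of Lemma~\ref{obser.regular}. Multiplying the equation in \eqref{h=0_c} by $v_t$ and integrating over $(0,1)$ --- the boundary contributions at $x_0$ vanishing by Proposition~\ref{domain}, exactly as in Lemma~\ref{obser.regular} --- one obtains
\[
\frac{d}{dt}\int_0^1 a v_x^2\,dx=2\int_0^1 v_t^2\,dx-2\int_0^1 cvv_t\,dx\ge -\frac12\int_0^1 c^2 v^2\,dx\ge -C\int_0^1 a v_x^2\,dx,
\]
where the last step uses Young's inequality together with the Hardy--Poincar\'e inequality in the form $\int_0^1 v^2\,dx\le C\int_0^1 a v_x^2\,dx$ (Proposition~\ref{HP} with $p(x)=(a(x)|x-x_0|^4)^{1/3}$, invoking $\big(a(x)/(x-x_0)^2\big)^{1/3}\ge C_2>0$, precisely as at the end of Lemma~\ref{obser.regular}). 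Hence $t\mapsto e^{Ct}\int_0^1 a v_x^2\,dx$ is nondecreasing, so $\int_0^1 a v_x^2(0,x)\,dx\le \frac{2e^{CT}}{T}\int_{T/4}^{3T/4}\int_0^1 a v_x^2(t,x)\,dxdt$, and the right-hand side is $\le C\int_0^T\int_\omega v^2\,dxdt$ by the analogue of Lemma~\ref{lemma3} just established, because $s\Theta e^{2s\varphi}$ is bounded below on $[T/4,3T/4]$. A second application of the Hardy--Poincar\'e inequality, word for word as in Lemma~\ref{obser.regular}, turns this into $\int_0^1 v^2(0,x)\,dx\le C_T\int_0^T\int_\omega v^2\,dxdt$; the density argument from the proof of Proposition~\ref{obser.} removes the regularity restriction on $v_T$, and undoing the substitution $\tilde v=e^{-\lambda t}v$ completes the proof. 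The one place needing genuine care --- the only step beyond bookkeeping --- is this last energy identity: the potential destroys the monotonicity of $t\mapsto\int_0^1 a v_x^2\,dx$ available when $c\equiv 0$, so one replaces it by the weighted monotonicity of $e^{Ct}\int_0^1 a v_x^2\,dx$, which closes only because the Hardy--Poincar\'e inequality allows $\int_0^1 c^2 v^2\,dx$ to be dominated by $\int_0^1 a v_x^2\,dx$ despite the interior degeneracy of $a$.
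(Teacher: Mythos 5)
Your proposal is correct and follows exactly the route the paper intends: the paper's own proof of Proposition \ref{obser_c} is a two-line remark stating that one reduces to $c\ge 0$ via $\tilde v=e^{-\lambda t}v$ and that the analogues of the Caccioppoli inequality and of Lemma \ref{lemma3} still hold, ``as before,'' and you have simply supplied the details of that reduction and of the absorption of the bounded potential into each estimate (Caccioppoli, the classical Carleman estimate, Corollary \ref{cor_c} in place of Theorem \ref{Cor1}). The one place where you go beyond bookkeeping --- replacing the lost monotonicity of $t\mapsto\int_0^1 a v_x^2\,dx$ by the monotonicity of $e^{Ct}\int_0^1 a v_x^2\,dx$, closed by dominating $\int_0^1 c^2v^2\,dx$ with $\int_0^1 a v_x^2\,dx$ through the Hardy--Poincar\'e inequality --- is precisely the repair that the paper's ``as before'' silently requires, and you carry it out correctly.
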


\end{document}